\newcommand{\sinhc}{\mathrm{sinhc}}
\newcommand{\sinc}{\mathrm{sinc}}
\newcommand{\ad}{\mathrm{ad}}           
\def\<{\leqslant}           
\def\>{\geqslant}           
\def\d{\partial}
\def\wh{\widehat}
\def\wt{\widetilde}
\def\Re{{\rm Re}}   
\def\Im{{\rm Im}}   
\def\rprod{\mathop{\overrightarrow{\prod}}}
\def\conv{\mathop{\overrightarrow{\circledast}}}
\def\cH{\mathcal{H}}   
\def\mR{{\mathbb R}}    
\def\mC{\mathbb{C}}    
\def\Tr{\mathrm{Tr}}       
\def\rT{{\rm T}}        
\def\bS{{\mathbf S}}
\def\bE{\mathbf{E}}    
\def\bK{\mathbf{K}}    
\def\[[[{[\![\![}
\def\]]]{]\!]\!]}
\def\bra{{\langle}}
\def\ket{{\rangle}}
\def\Bra{\left\langle}
\def\Ket{\right\rangle}
\def\re{{\rm e}}        
\def\rd{{\rm d}}        
\def\bA{{\mathbf A}}
\def\bJ{\mathbf{J}}
\def\x{\times}
\def\ox{\otimes}
\def\fB{\mathfrak{B}}
\def\fF{\mathfrak{F}}
\def\fI{\mathfrak{I}}
\def\fH{\mathfrak{H}}
\def\fS{\mathfrak{S}}
\def\cW{\mathcal{W}}
\def\cD{\mathcal{D}}
\def\cM{\mathcal{M}}
\def\sP{\mathsf{P}}
\def\cG{\mathcal{G}}
\def\cI{\mathcal{I}}
\def\cP{\mathcal{P}}
\def\cov{\mathbf{cov}}
\def\var{\mathbf{var}}
\def\eps{\epsilon}
\def\Ups{\Upsilon}
\journalname{Applied Mathematics and Optimization}
\begin{document}

\title{Multi-point Gaussian States,
Quadratic-exponential Cost Functionals, and Large Deviations Estimates
for Linear Quantum Stochastic Systems$^*$
\thanks{\noindent
$^*$This work is supported
by the Air Force Office of Scientific Research (AFOSR) under agreement number FA2386-16-1-4065.
The authors are with the Research School of Engineering, College of Engineering and Computer Science, Australian National University, Canberra, Acton, ACT 2601, Australia.
}
}

\titlerunning{Quadratic-exponential Functionals and Large Deviations Estimates
for Linear Quantum Systems}        

\author{Igor~G.~Vladimirov \and
Ian~R.~Petersen \and
Matthew~R.~James}

\authorrunning{Igor~G.~Vladimirov,
Ian~R.~Petersen,
Matthew~R.~James} 

\institute{Igor G. Vladimirov \at
              \email{igor.g.vladimirov@gmail.com}           
           \and
           Ian R. Petersen \at
           \email{i.r.petersen@gmail.com}
           \and
           Matthew R. James \at
           \email{matthew.james@anu.edu.au}
}

\date{}

\maketitle

\begin{abstract}
This paper is concerned with risk-sensitive performance analysis for linear quantum stochastic systems interacting with external bosonic fields.
We consider a cost functional in the form of the exponential moment of the integral of a quadratic polynomial of the system variables over a bounded time interval. An integro-differential equation is obtained for the time evolution of this quadratic-exponential functional, which is compared with the original quantum risk-sensitive performance criterion employed previously  for measurement-based quantum control and filtering problems. Using multi-point Gaussian quantum states  for the past history of the system variables and their first four moments, we discuss a quartic approximation of the cost
functional and its infinite-horizon asymptotic behaviour. The computation of the asymptotic growth rate of this approximation is reduced to solving two algebraic Lyapunov equations.
We also outline further approximations of the cost functional, based on higher-order  cumulants and their growth rates,  together with large deviations estimates.
For comparison, an auxiliary classical Gaussian Markov  diffusion process is considered in a complex Euclidean space which reproduces the quantum system variables at the level of covariances but has different higher-order moments relevant to the risk-sensitive criteria.
The results of the paper are also demonstrated by a numerical example and may find applications
 to coherent quantum risk-sensitive control problems, where the plant and controller form a fully quantum closed-loop system, and other settings with nonquadratic cost functionals.
\keywords{Linear quantum stochastic system \and
Gaussian quantum state \and
Risk-sensitive quantum control}
\subclass{
81S25   	
\and
81S05       
\and
81S22       
\and
81P16   	
\and
81P40   	
\and
81Q93   	
\and
81Q10   	
\and
60G15   	
}
\end{abstract}

%

\section{Introduction}
\label{intro}

The main theme of the present paper is a class of risk-sensitive performance criteria for linear quantum stochastic systems. Such systems, also referred to as open quantum harmonic oscillators (OQHOs) \cite{GZ_2004}, play the role of building blocks in linear quantum systems theory \cite{P_2017}. This paradigm is part of the broader area of quantum filtering and control (see, for example, \cite{B_1983,B_2010,BH_2006,BVJ_2007,EB_2005,J_2004,J_2005,JNP_2008,NJP_2009,WM_2010})
 which is concerned with achieving certain dynamic properties for open quantum systems interacting with surroundings such as classical measuring devices, other quantum systems or external quantum fields. In particular, these properties may include stability, optimality and robustness. In contrast to their classical counterparts, quantum systems are equipped with noncommuting operator-valued variables whose evolution obeys the laws of quantum mechanics \cite{LL_1991,M_1998,S_1994} and statistical characteristics are described in terms of quantum probability \cite{M_1995}. The applications include, for example, artificially engineered systems for influencing the state of matter at atomic scales through its interaction with nonclassical light in quantum optics \cite{WM_2008} and quantum computing \cite{NC_2000}.

A unified language for the modelling of open quantum systems, interacting with the environment, is provided by Hudson-Parthasarathy  quantum stochastic differential equations (QSDEs) \cite{HP_1984,P_1992,P_2015} (see, also, the review paper \cite{H_1991} and references therein) which govern the system variables in the Heisenberg picture of quantum dynamics. The QSDEs are driven by quantum Wiener processes on a symmetric Fock space \cite{P_1992,PS_1972} which represent external bosonic fields (such as quantised electromagnetic radiation). The structure of these quantum stochastic dynamics depends on the energetics of the system-field interaction and the self-energy of the system. These are captured by the system-field coupling operators and the system Hamiltonian, with the latter specifying the internal dynamics of the system in isolation from the environment. This approach to open quantum systems and their interconnections is widely used in quantum control  \cite{B_1983,EB_2005,J_2005,JNP_2008}, where it is also combined with the theory of quantum feedback networks \cite{GJ_2009,JG_2010} (see also \cite{ZJ_2012} and references therein). 
In the case of OQHOs, the system variables satisfy canonical commutation relations (similar to those for the quantum-mechanical position and momentum operators  \cite{M_1998}), the Hamiltonian is a quadratic function and the coupling operators are linear functions of the variables. The linearity of the resulting QSDEs makes them tractable in many respects including the dynamics of the first two moments of the system variables and preservation of the Gaussian nature of system states (provided the fields are in the vacuum state). Due to the linear-Gaussian quantum dynamics, such systems resemble Gaussian Markov diffusion processes generated by classical linear SDEs \cite{KS_1991}.  This analogy is exploited in the quantum counterparts \cite{EB_2005,MP_2009,NJP_2009} of the classical linear quadratic Gaussian (LQG) control and filtering problems \cite{AM_1979,KS_1972} (and also $\cH_{\infty}$-control settings \cite{JNP_2008}). The optimality in these approaches is understood as the minimization of mean square cost functionals which are concerned with second-order moments of the system variables at one instant in time (or the integrals of such moments in finite-horizon formulations \cite{VP_2011b}). A more general class of cost functionals for quantum systems, which use one-point averaging of  nonquadratic functions of system variables,   is considered in \cite{P_2014}.

Qualitatively different are the quantum risk-sensitive performance criteria \cite{J_2004,J_2005} (see also \cite{DDJW_2006,YB_2009}) which were used previously  for measurement-based quantum control and filtering problems. They employ weighted mean square values of time-ordered exponentials satisfying operator differential equations (where noncommutativity of quantum variables plays its part). Similarly to their classical predecessors \cite{BV_1985,J_1973,W_1981} and in contrast to the mean square values of the system variables themselves, the quantum risk-sensitive cost functionals involve multi-point quantum states and higher-order moments for the system variables at different instants. In addition to being a challenge from a purely theoretical point of view, the computation and minimization of such functionals is also of practical importantance.  In fact, this approach leads to controllers and filters which secure a more conservative behaviour of the system not only in terms of the one-point second-order moments of the system variables but also their higher-order multi-point moments.  Also, there are connections (though more limited in the quantum case) between the risk-sensitive criteria and robustness with respect to statistical uncertainty in the driving noise described in terms of relative entropy (see, for example,   \cite{J_2004,YB_2009} and references therein).

A quadratic-exponential functional (QEF), considered in the present paper, is organised as the exponential moment of the integral of a quadratic form of the system variables over a bounded time interval. It differs from the original quantum risk-sensitive criterion \cite{J_2004,J_2005}, mentioned above,  in that a weighted mean square value of the time-ordered exponential is replaced with the mean value of the operator exponential of the integral. At the same time, the cost functional studied here is, in fact, a more straightforward extension of its classical predecessors \cite{BV_1985,J_1973,W_1981} to the quantum case and also imposes penalty both on the second and higher-order moments of the system variables. As before, the shift towards the higher-order moments is controlled by a risk-sensitivity parameter. These features motivate  the study of such cost functionals for quantum systems.

To this end, we obtain an integro-differential equation for the time evolution of the QEF and compare it with the original quantum risk-sensitive performance criterion.  Assuming that the OQHO is driven by vacuum fields and using the multi-point Gaussian quantum states for the system variables at different instants and their first four moments, we study a quartic approximation of the cost functional and its infinite-horizon asymptotic behaviour. The  computation of the asymptotic growth rate for this approximation is reduced to solving two algebraic Lyapunov equations. This is a quantum extension of similar results of \cite{VP_2010b} on classical linear stochastic systems. However, we also discuss an auxiliary classical Gaussian Markov diffusion  process in a complex Euclidean space which reproduces the quantum system variables at the covariance function level but has different higher-order moments relevant to the risk-sensitive criteria. In addition to the quartic approximation of the QEF, we also outline its higher-order approximations and their asymptotic growth  rates along with a large deviations estimate   of the Cramer type \cite{DE_1997,S_1996}.

The paper is organised as follows.
Section~\ref{sec:class} specifies the class of linear quantum stochastic systems being considered
 and provides background material.
Section~\ref{sec:multi} discusses multi-point Gaussian quantum states associated with the system variables at different instants.
Section~\ref{sec:QEF} defines the quadratic-exponential functional
and establishes an integro-differential equation for its time evolution.
Section~\ref{sec:quart}  discusses a quartic approximation of the QEF and its state-space computation.
Section~\ref{sec:cumul} outlines further approximations of the QEF using higher-order cumulants.
Section~\ref{sec:large} establishes an upper bound for the cumulants and provides a related large deviations estimate.
Section~\ref{sec:CCCP} considers a correspondence between vectors of self-adjoint quantum variables and classical random vectors at the level of covariances and discusses its violation in regard to higher-order moments.
Section~\ref{sec:numer} provides a numerical example which demonstrates the quartic approximation of the QEF for a two-mode oscillator.
Section~\ref{sec:conc} makes concluding remarks.
Appendices \ref{sec:quadcom}, \ref{sec:covquad} and \ref{sec:conv}  provide auxiliary lemmas on computing the commutator and covariance of quadratic functions of Gaussian quantum variables, and also on averaging in a class of convolution-like integrals of matrix-valued functions.

\section{Linear quantum stochastic systems}\label{sec:class}

We consider a quantum system whose dynamic variables $X_1, \ldots, X_n$ are time-varying self-adjoint operators, defined on a dense domain in a complex separable Hilbert space $\fH$ and satisfying the Weyl  canonical commutation relations (CCRs)
\begin{equation}
\label{CCR}
    \cW_u \cW_v
    =
    \re^{i v^{\rT}\Theta u}
    \cW_{u+v}
\end{equation}
for all $u,v\in \mR^n$. Here, $i:= \sqrt{-1}$ is the imaginary unit, and
$\Theta:= (\theta_{jk})_{1\< j,k\< n}$ is a nonsingular real antisymmetric matrix of order $n$ (which is assumed to be even). Also,
\begin{equation}
\label{cW}
  \cW_u
  :=
  \re^{iu^{\rT} X}
\end{equation}
is the unitary Weyl operator \cite{F_1989} parameterised by a vector $u:=(u_k)_{1\< k\< n} \in \mR^n$ whose entries specify the linear combination $u^{\rT} X = \sum_{k=1}^n u_k X_k$ of the system variables assembled into the vector
\begin{equation}
\label{X}
    X:=
    \begin{bmatrix}
        X_1\\
        \vdots\\
        X_n
    \end{bmatrix}
\end{equation}
(vectors are organised  as columns unless indicated otherwise).
Due to self-adjointness of $u^{\rT}X$, the adjoint of the Weyl operator in (\ref{cW}) satisfies $\cW_u^{\dagger} = \cW_{-u}$.
 Also, (\ref{CCR}) implies that
\begin{equation}
\label{cWcom}
    [\cW_u, \cW_v]
    =
    -2i\sin(u^{\rT}\Theta v)
    \cW_{u+v},
\end{equation}
where $[\xi,\eta]:=\xi\eta - \eta \xi$ is the commutator of linear operators.
In view of (\ref{cWcom}),
the Heisenberg  infinitesimal form of the Weyl CCRs (\ref{CCR}) is described by the commutator matrix
\begin{align}
\nonumber
    [X, X^{\rT}]
      & :=    ([X_j,X_k])_{1\< j,k\< n}\\
\label{Theta}
     & =    XX^{\rT}- (XX^{\rT})^{\rT}
     =
     2i \Theta \ox \cI_{\fH}
\end{align}
(on a dense domain in $\fH$),
where $\ox$ is the tensor product, and $\cI_{\fH}$ is the identity operator on $\fH$ (the subscript will sometimes be omitted for the sake of brevity). The transpose $(\cdot)^{\rT}$ acts on matrices with operator-valued entries as if the latter were scalars. By a standard convention on linear operators, $\Theta \ox \cI_{\fH}$ in (\ref{Theta}) will be identified with the matrix $\Theta$.

For example, in the case when the system variables consist of the pairs of conjugate quantum mechanical position $q_k$ and momentum $p_k:= -i\d_{q_k}$ operators (with the Planck constant being appropriately normalised) \cite{M_1998,S_1994}, with $1\< k\< \frac{n}{2}$, they are defined on the Schwartz space \cite{V_2002} of rapidly decaying complex-valued functions on $\mR^{n/2}$ which is dense in the Hilbert space $L^2(\mR^{n/2})$ of square integrable functions (and is invariant under those operators). In this case, $[q_j,p_k] = i\delta_{jk}$ for all $j,k=1,\ldots, \frac{n}{2}$, where $\delta_{jk}$ is the Kronecker delta,  and the CCR matrix $\Theta$ of the system variables $q_1, \ldots, q_{n/2}, p_1, \ldots, p_{n/2}$ takes the form
\begin{equation}
\label{ThetaJ}
    \Theta
    =
    \frac{1}{2}
    \begin{bmatrix}
        0 & I_{n/2}\\
        -I_{n/2} & 0
    \end{bmatrix}
    =
    \frac{1}{2}
    \bJ \ox I_{n/2},
\end{equation}
where $\ox$ is the Kronecker product of matrices, $I_r$ denotes the identity matrix of order $r$, and use is made of the matrix
\begin{equation}
\label{bJ}
    \bJ
    :=
    \begin{bmatrix}
        0 & 1\\
        -1 & 0
    \end{bmatrix}
\end{equation}
which spans the space of antisymmetric matrices of order $2$. In fact, any CCR matrix can be brought to the form (\ref{ThetaJ}) by an appropriate linear transformation of the system variables.

The system under consideration is organised as a linear quantum stochastic system which models an open quantum harmonic oscillator (OQHO) \cite{EB_2005,GZ_2004} with $\frac{n}{2}$ modes which interacts with an $m$-channel external bosonic field, where $m$ is even. The energetics of the system itself and its interaction with the fields is specified by the system Hamiltonian $H$ and the system-field coupling operators $L_1, \ldots, L_m$ which are, respectively, quadratic and linear functions of the system variables:
\begin{equation}
\label{H_L}
    H
    :=
    \frac{1}{2}
    \sum_{j,k=1}^{n}
    r_{jk}X_jX_k
    =
    \frac{1}{2}
    X^{\rT} R X,
    \qquad
    L
    :=
    \begin{bmatrix}
        L_1\\
        \vdots\\
        L_m
    \end{bmatrix}
    =
    MX.
\end{equation}
Here, $R:= (r_{jk})_{1\< j,k\< n} \in \mR^{n\x n}$ and $M \in \mR^{m\x n}$ are given matrices, with $R$ being symmetric, which will be referred to as the energy and coupling matrices.   The evolution of the system variables in time $t\>0$ is governed by a linear Hudson-Parthasarathy QSDE \cite{HP_1984,P_1992}
\begin{equation}
\label{dX}
    \rd X
    =
    \cG(X)\rd t - i[X,L^{\rT}] \rd W
    =
    A X \rd t+ B \rd W
\end{equation}
whose structure (including its linearity in the case being considered)
are clarified below.  This QSDE is driven by the vector
$$
  W:=
  \begin{bmatrix}
        W_1\\
        \vdots\\
        W_m
  \end{bmatrix}
$$
of quantum Wiener processes $W_1, \ldots, W_m$ which are time-varying self-adjoint operators on a symmetric Fock space $\fF$ \cite{P_1992,PS_1972}. These operators  represent the external fields and have a complex positive semi-definite Hermitian Ito matrix $\Omega:= (\omega_{jk})_{1\< j,k\< m}$:
\begin{equation}
\label{WW}
    \rd W \rd W^{\rT}
    =
    \Omega \rd t,
    \qquad
    \Omega := I_m + iJ.
\end{equation}
Here,
\begin{equation}
\label{J}
        J
        :=
       \begin{bmatrix}
           0 & I_{m/2}\\
           -I_{m/2} & 0
       \end{bmatrix}
       =
        \bJ \ox I_{m/2}
\end{equation}
is an orthogonal real antisymmetric matrix of order $m$ (so that $J^2=-I_m$), which specifies CCRs for the quantum Wiener processes as $[\rd W, \rd W^{\rT}] = 2iJ\rd t$, and use is made of the matrix $\bJ$ from (\ref{bJ}).

The Hilbert space $\fH$, which provides a common domain for the action of the system and external field variables,  has the tensor-product structure
\begin{equation}
\label{fH}
    \fH
    :=
    \fH_0 \ox \fF,
\end{equation}
where $\fH_0$ is the initial system space on which the system variables  $X_1(0), \ldots, X_n(0)$ are defined. The matrices $A\in \mR^{n\x n}$ and $B\in \mR^{n\x m}$ in (\ref{dX}) are expressed as
\begin{equation}
\label{A_B}
    A
    :=
    2\Theta (R + M^{\rT} J M)
    =
    2\Theta R - \frac{1}{2} BJB^{\rT} \Theta^{-1},
    \qquad
    B
    := 2\Theta M^{\rT}
\end{equation}
in terms of the energy and coupling matrices $R$ and $M$ from (\ref{H_L})
and satisfy the physical realizability (PR) condition \cite{JNP_2008,SP_2012}:
\begin{align}
\label{APR}
    A \Theta + \Theta A^{\rT} + BJB^{\rT} & = 0.
\end{align}
Also, $\cG$ in (\ref{dX}) denotes the  Gorini-Kossakowski-Sudarshan-Lindblad generator \cite{GKS_1976,L_1976} (see also \cite{A_2000}), whose role is similar to that of the infinitesimal generators of classical Markov diffusion processes \cite{KS_1991}. More precisely, $\cG$ is a linear superoperator which specifies the drift term in the evolution of a system operator $\xi$ (a function of the system variables $X_1,\ldots,X_n$) as
\begin{equation}
\label{xiQSDE}
    \rd \xi
    =
    \cG(\xi)\rd t - [\xi, h^{\rT}]\rd W.
\end{equation}
Furthermore, $\cG$ takes into account both the internal dynamics of the system and its interaction with the external fields by acting on $\xi$ as
\begin{equation}
\label{cG}
    \cG(\xi)
    :=
    i[H,\xi]
    +
    \cD(\xi),
\end{equation}
where $H$ is the system Hamiltonian in (\ref{H_L}).  The last term in (\ref{cG}) is associated with the system-field interaction and involves the decoherence superoperator $\cD$ which acts as
\begin{equation}
\label{cD}
    \cD(\xi)
    :=
    \frac{1}{2}
    \sum_{j,k=1}^m
    \omega_{jk}
    \left(
        L_j[\xi,L_k] + [L_j,\xi]L_k
    \right).
\end{equation}
Here, $\omega_{jk}$ are the entries of the quantum Ito matrix $\Omega$ in (\ref{WW}), and $L_1, \ldots, L_m$ are the coupling operators from (\ref{H_L}).
In particular, by substituting $L$ from (\ref{H_L}) into (\ref{cD}) and using the CCRs (\ref{Theta}), the entrywise application  of $\cD$ to the vector $X$ of the system variables in (\ref{X}) leads to
\begin{align}
\nonumber
    \cD(X)
    & :=
    \frac{1}{2}
    \sum_{j,k=1}^m
    \omega_{jk}
    \left(
        L_j[X,L_k] + [L_j,X]L_k
    \right)\\
\label{cD1}
    & = 2\Theta M^{\rT} J M X
     =
      - \frac{1}{2} BJB^{\rT} \Theta^{-1} X,
\end{align}
which gives rise to the field-related term $2\Theta M^{\rT}JM = - \frac{1}{2} BJB^{\rT} \Theta^{-1}$ in the matrix $A$ in (\ref{A_B}). The other part $i[H,X] = 2\Theta R X$ of the drift vector $AX$ in the QSDE (\ref{dX}) comes from the quadratic nature of the Hamiltonian $H$ in (\ref{H_L}) in combination with the CCRs (\ref{Theta}) and describes the internal dynamics of the system variables which they would have if the system were isolated from the environment. Also, the representation of the  dispersion matrix $B = -i[X,L^{\rT}]$ of the QSDE (\ref{dX}) in (\ref{A_B})  follows from the linear dependence of the coupling operators in (\ref{H_L}) on the system variables and the CCRs (\ref{Theta}).

In contrast to classical SDEs, the specific structure of the drift and diffusion terms of the QSDE (\ref{xiQSDE}) (and its particular case (\ref{dX})),  described by (\ref{cG})--(\ref{cD1}),  comes from  the evolution
\begin{equation}
\label{xiuni}
    \xi(t)
    =
    U(t)^{\dagger} (\xi(0)\ox \cI_{\fF}) U(t)
\end{equation}
of the system operator $\xi$, with $\xi(0)$ acting on the initial system space $\fH_0$.  Here, $U(t)$ is a time-varying unitary operator which acts on the system-field space $\fH$ in (\ref{fH}) and satisfies another QSDE
\begin{equation}
\label{dU}
    \rd U(t)
    =
    -U(t) \Big(i(H(t)\rd t + L(t)^{\rT} \rd W(t)) + \frac{1}{2}L(t)^{\rT}\Omega L(t)\rd t\Big),
\end{equation}
with $U(0)=\cI_{\fH}$. In accordance with the stochastic flow (\ref{xiuni}), the current Hamiltonian and coupling operators  in (\ref{dU}) are given by
$
    H(t)
     =
    U(t)^{\dagger} (H(0)\ox \cI_{\fF}) U(t)
$ and
$
    L(t)
    =
    U(t)^{\dagger} (L(0)\ox \cI_{\fF}) U(t)
$
and retain their quadratic-linear  dependence (\ref{H_L}) on the system variables $X_1(t), \ldots, X_n(t)$. This property follows from the fact that the map $ \zeta\mapsto U(t)^{\dagger} \zeta U(t)$, which acts on operators  $\zeta$ on $\fH$  (and applies to vectors of such operators entrywise),  is a unitary similarity transformation.

In view of (\ref{dU}), the unitary operator $U(t)$ reflects an accumulated effect from internal driving forces of the system and its interaction with the external fields over the time interval $[0,t]$ and is adapted in the sense that  it acts effectively on the subspace $\fH_0\ox \fF_t$, where $\{\fF_t:\, t\>0\}$ is the Fock space filtration.
The QSDE (\ref{xiQSDE}) follows from (\ref{xiuni}) and (\ref{dU}) due to the quantum Ito formula \cite{HP_1984,P_1992} combined with (\ref{WW}), unitarity of $U$ and commutativity between the forward Ito increments $\rd W(t)$ and adapted processes (including $U$) considered at time $s\< t$. This commutativity is a consequence of the continuous tensor-product structure  \cite{PS_1972} of the Fock space $\fF$.

The QSDEs (\ref{dX}), (\ref{xiQSDE}) and (\ref{dU}) correspond to a particular yet important scenario of quantum stochastic dynamics, where there is no photon exchange between the fields, and the  scattering matrix is an identity matrix, which effectively eliminates the gauge processes from consideration. The presence of  these processes and more general scattering matrices \cite{HP_1984,P_1992} affects the dynamics of the unitary operator $U$ and is taken into account in QSDEs which are used in the quantum feedback network theory \cite{GJ_2009,JG_2010}.

Due to the linearity of the QSDE (\ref{dX}), the OQHO is employed extensively  as a basic model in linear quantum filtering and control \cite{JNP_2008,NJP_2009,P_2017} with quadratic performance criteria. If the initial system variables have finite second moments, that is,
\begin{equation}
\label{finsec}
    \bE
    \big(
        X(0)^{\rT}X(0)
    \big)
    =
    \sum_{k=1}^n
    \bE
    \big(
        X_k(0)^2
    \big)
    <
    +\infty,
\end{equation}
then, similarly to the classical case, this property is preserved in time by the linear dynamics (\ref{dX}) and leads to the finite limit values of the first and second moments:
\begin{equation}
\label{EXX}
    \lim_{t\to +\infty}
    \bE X(t)
    =
    0,
    \qquad
    \lim_{t\to +\infty}
    \bE
    \big(
        X(t)X(t)^{\rT}
    \big)
    =
    P+i\Theta,
\end{equation}
provided the matrix $A$ in (\ref{A_B}) is Hurwitz.
Here, $\bE\xi:= \Tr(\rho \xi)$ is the expectation of a quantum variable $\xi$ over the density operator \begin{equation}
\label{rho}
    \rho:= \varpi\ox \upsilon,
\end{equation}
where $\varpi$ is the initial system state on $\fH_0$, and $\upsilon$ is the vacuum state of the input bosonic fields on the Fock space $\fF$. The matrix $P$ in (\ref{EXX}) is the infinite-horizon controllability Gramian
\begin{equation}
\label{P}
  P
  :=
  \int_0^{+\infty}
  \re^{tA}
  BB^{\rT}
  \re^{tA^{\rT}}
  \rd t
\end{equation}
of the matrix pair $(A,B)$,  which is a unique solution  of the following algebraic Lyapunov equation (ALE)
due to the matrix $A$ being Hurwitz:
\begin{equation}
\label{PALE}
    AP + PA^{\rT} + BB^{\rT} = 0.
\end{equation}
The first two moments of the system variables, mentioned above, are part of the averaged behaviour of the system at a particular instant and do not provide information on multi-point quantum correlations at different moments of time.

\section{Multi-point Gaussian quantum states}\label{sec:multi}

We will first revisit the statistical properties of the system variables $X_1(t), \ldots, X_n(t)$ at one point in time $t\>0$. These properties are encoded by the one-point quasi-characteristic function (QCF)  $\Phi: \mR_+\x \mR^n\to \mC$ defined by (see, for example, \cite{CH_1971})
\begin{equation}
\label{Phi}
    \Phi(t,u):= \bE \cW_u(t)
\end{equation}
in terms of the Weyl operator (\ref{cW}) associated with $X(t)$, with the averaging over the system-field state (\ref{rho}). Also, for any $t\>0$,
we denote by
\begin{equation}
\label{Sigma}
    \Sigma(t)
    :=
    \int_0^t
    \re^{(t-s)A}
    BB^{\rT}
    \re^{(t-s)A^{\rT}}
    \rd s
    =
    \int_0^t
    \re^{sA}
    BB^{\rT}
    \re^{sA^{\rT}}
    \rd s
\end{equation}
the finite-horizon (over the time interval $[0,t]$) controllability Gramian of the pair $(A,B)$,
satisfying the Lyapunov ODE
$$
    \dot{\Sigma} = A\Sigma + \Sigma A^{\rT} + BB^{\rT},
$$
with
$
    \Sigma(0) = 0
$. Since the matrix $A$ is assumed to be Hurwitz,  the function $\Sigma$ is related to the matrix $P$ in (\ref{P}) by
\begin{equation}
\label{PSigma}
    \Sigma(t)
    =
    P - \re^{tA}P\re^{tA^{\rT}}
    \to P,
    \qquad
    {\rm as}\
    t\to +\infty.
\end{equation}
The following lemma, which is given here for completeness,  shows that the evolution of the QCF under the linear QSDE  is identical to that for  classical Gaussian Markov diffusion processes (such as the Ornstein-Uhlenbeck process \cite{KS_1991}) produced by linear SDEs.

\begin{lemma}
Suppose the OQHO, governed by the linear QSDE (\ref{dX}), is driven by the input fields in the vacuum state. Then the QCF $\Phi$ for the system variables  in (\ref{Phi}) satisfies a linear functional equation
\begin{equation}
\label{Phist}
    \Phi(t,u)
     =
     \Phi\big(s,\re^{(t-s)A^{\rT}}u\big)
     \re^{-\frac{1}{2}\|u\|_{\Sigma(t-s)}^2}
\end{equation}
for any two moments of time $t\> s\> 0$ and all $u \in \mR^n$. Here, the function $\Sigma$ is given by (\ref{Sigma}), and $\|v\|_K:= \sqrt{v^{\rT}K v} = |\sqrt{K}v|$ denotes a weighted Euclidean semi-norm of a vector $v$ associated with a real  positive semi-definite symmetric matrix $K$.
\hfill$\square$
\end{lemma}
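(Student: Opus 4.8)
The plan is to solve the linear QSDE (\ref{dX}) explicitly on the interval $[s,t]$ and then exploit the independent-increment (continuous tensor-product) structure of the vacuum state. By the variation-of-constants formula for (\ref{dX}), I would write
\begin{equation*}
    X(t)
    =
    \re^{(t-s)A}X(s)
    +
    Y,
    \qquad
    Y
    :=
    \int_s^t
    \re^{(t-r)A}B\,\rd W(r),
\end{equation*}
so that, setting $v:=\re^{(t-s)A^{\rT}}u$, the exponent in the Weyl operator (\ref{cW}) splits as $u^{\rT}X(t)=v^{\rT}X(s)+u^{\rT}Y$. The first summand is adapted to $\fH_0\ox\fF_s$, whereas the second is built from the forward increments $\rd W(r)$ with $r\in(s,t]$ and thus acts on the factor of the Fock space over $(s,t]$. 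These two self-adjoint operators commute, and hence the Weyl operator factorises as $\cW_u(t)=\re^{iv^{\rT}X(s)}\re^{iu^{\rT}Y}=\cW_v(s)\,\re^{iu^{\rT}Y}$.

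Next I would take the expectation over the product state (\ref{rho}). Since $\cW_v(s)$ lives on $\fH_0\ox\fF_{[0,s]}$ while $\re^{iu^{\rT}Y}$ lives on the vacuum factor over $(s,t]$, and the vacuum itself factorises across these disjoint time windows, the average of the product equals the product of the averages, giving
\begin{equation*}
    \Phi(t,u)
    =
    \bE\big(\cW_v(s)\big)\,\bE\big(\re^{iu^{\rT}Y}\big)
    =
    \Phi\big(s,\re^{(t-s)A^{\rT}}u\big)\,\bE\big(\re^{iu^{\rT}Y}\big),
\end{equation*}
where the first factor is $\Phi(s,v)$ by the definition (\ref{Phi}).

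It then remains to evaluate the vacuum factor. Writing $u^{\rT}Y=\int_s^t f(r)^{\rT}\rd W(r)$ with $f(r):=B^{\rT}\re^{(t-r)A^{\rT}}u\in\mR^m$, this is a centred self-adjoint Gaussian quantum variable, so its vacuum characteristic function is $\bE\,\re^{iu^{\rT}Y}=\exp\big(-\tfrac12\bE((u^{\rT}Y)^2)\big)$. Using the quantum Ito relation (\ref{WW}) I would compute
\begin{equation*}
    \bE\big((u^{\rT}Y)^2\big)
    =
    \int_s^t f(r)^{\rT}\Omega f(r)\,\rd r
    =
    \int_s^t f(r)^{\rT}f(r)\,\rd r,
\end{equation*}
since the antisymmetric contribution $f^{\rT}Jf$ vanishes for the real vector $f$ (recall $\Omega=I_m+iJ$ with $J$ antisymmetric). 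Substituting $f$ and changing the variable $r\mapsto t-r$ reproduces the Gramian $\Sigma(t-s)$ of (\ref{Sigma}), whence $\bE((u^{\rT}Y)^2)=u^{\rT}\Sigma(t-s)u=\|u\|_{\Sigma(t-s)}^2$, and (\ref{Phist}) follows.

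The hard part will be the rigorous justification of the two factorisations: the commutativity $[v^{\rT}X(s),u^{\rT}Y]=0$ and the splitting of the vacuum average into a product. Both rest on the continuous tensor-product (independent-increment) structure of the Fock space together with the adaptedness of $X(s)$, which must be invoked carefully. A secondary point needing attention is the Gaussianity of $u^{\rT}Y$ in the vacuum --- that its characteristic function is the exponential of $-\tfrac12$ times its second moment --- which follows from the quasi-free nature of the vacuum state and can alternatively be derived by applying the quantum Ito formula to the family $\re^{i\int_s^\tau f(r)^{\rT}\rd W(r)}$ in the upper limit $\tau$ and averaging, the resulting martingale part vanishing in the vacuum.
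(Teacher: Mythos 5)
Your proposal is correct and follows essentially the same route as the paper: the variation-of-constants decomposition $X(t)=\re^{(t-s)A}X(s)+Y$, the commutation and statistical independence of $X(s)$ and $Y$ from the continuous tensor-product structure of the Fock space and the product form of the state (\ref{rho}), and the resulting factorisation of the expectation. The only cosmetic difference is that the paper simply invokes the vacuum quasi-characteristic functional (\ref{QCFvac}) for the factor $\bE\,\re^{iu^{\rT}Y}$, whereas you re-derive that same identity from the quasi-free property and the Ito table $\rd W\rd W^{\rT}=\Omega\,\rd t$ (correctly noting that the $iJ$ part drops out for real $f$).
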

\begin{proof}
The solution of the linear QSDE (\ref{dX}) admits the decomposition
\begin{equation}
\label{XY}
    X(t)   = \re^{(t-s)A}X(s) + Y(s,t)
\end{equation}
for any $t\>s\>0$, where $Y(s,t)$ is an auxiliary vector of self-adjoint operators given by
\begin{equation}
\label{Y}
    Y(s,t)
     :=
    \int_s^t
    \re^{(t-\tau)A}
    B \rd W (\tau).
\end{equation}
Since the increments of the quantum Wiener process $W$ commute with the current values of adapted processes, then
\begin{equation}
\label{XYcom}
    [X(s),Y(s,t)^{\rT}] = \int_s^t [X(s),\rd W(\tau)^{\rT}] B^{\rT}\re^{(t-\tau)A^{\rT}} = 0
\end{equation}
for all $t\> s\> 0$. In fact, the entries of $X(s)$ and $Y(s,t)$ act on orthogonal subspaces of the system-field Hilbert space $\fH$ in (\ref{fH}). Furthermore, since the system and fields are in the product state (\ref{rho}), with the fields being in the vacuum state, then  $X(s)$ and $Y(s,t)$ are statistically independent. In combination with (\ref{XY}) and (\ref{XYcom}), this independence implies that
\begin{align}
\nonumber
    \Phi(t,u)
    & =
    \bE \re^{iu^{\rT}(\re^{(t-s)A}X(s) + Y(s,t))}\\
\nonumber
    & =
    \bE \re^{iu^{\rT}\re^{(t-s)A}X(s)}\Psi(s,t,u)\\
\label{Phist1}
    & =
    \Phi\big(s,\re^{(t-s)A^{\rT}}u\big)
    \Psi(s,t,u),
\end{align}
where
\begin{equation}
\label{Psi}
    \Psi(s,t,u)
    :=
    \bE\re^{iu^{\rT}Y(s,t)}
\end{equation}
is the QCF for the vector $Y(s,t)$. Now, in the vacuum state, the input fields have the quasi-characteristic functional
\begin{equation}
\label{QCFvac}
    \bE \re^{i\int_0^t f(s)^{\rT}\rd W(s)}
    =
    \re^{-\frac{1}{2}\int_0^t |f(s)|^2 \rd s}
\end{equation}
for any $t\>0$ and any locally square integrable function $f:\mR_+\to \mR^m$. By substituting $Y$ from (\ref{XY}) into (\ref{Psi}) and using (\ref{QCFvac}), it follows that
\begin{align}
\nonumber
    \Psi(s,t,u)
    & :=
    \bE\re^{iu^{\rT}    \int_s^t
    \re^{(t-\tau)A}
    B \rd W (\tau)}\\
\nonumber
    & =
    \re^{-\frac{1}{2}
    \int_s^t
    |
    B^{\rT}
    \re^{(t-\tau)A^{\rT}}
    u    |^2 \rd \tau}\\
\label{Psi1}
    &
    =
    \re^{-\frac{1}{2}u^{\rT}\int_0^{t-s}
    \re^{\tau A} BB^{\rT} \re^{\tau A^{\rT}}\rd \tau u}
    =
     \re^{-\frac{1}{2}\|u\|_{\Sigma(t-s)}^2}     ,
\end{align}
where use is made of (\ref{Sigma}). Substitution of (\ref{Psi1}) into (\ref{Phist1}) leads to (\ref{Phist}).
\end{proof}

From (\ref{Phist}), it follows that if the initial state $\varpi$ of the system is Gaussian \cite{KRP_2010,PS_2015}   (that is, $\ln\Phi(0,u)$ is a quadratic function of $u \in \mR^n$), then so is its reduced quantum state at subsequent moments of time $t>0$. Furthermore, since the matrix $A$ is  Hurwitz,  the relations (\ref{PSigma}), (\ref{Phist}) and the continuity $\lim_{u\to 0} \Phi(s,u) = \Phi(s,0) = 1$  imply the pointwise convergence of the QCF:
\begin{equation}
\label{limPhi}
    \lim_{t\to +\infty}\Phi(t,u)
    =
    \lim_{t\to +\infty}
    \Big(
     \Phi\big(0,\re^{tA^{\rT}}u\big)
     \re^{-\frac{1}{2}\|u\|_{\Sigma(t)}^2}
     \Big)
     =
    \re^{-\frac{1}{2}\|u\|_P^2},
    \qquad
    u \in \mR^n,
\end{equation}
which holds regardless of whether the initial state is Gaussian (or whether (\ref{finsec}) is satisfied). The relation (\ref{limPhi}) is equivalent to the
weak convergence \cite{B_1968,CH_1971}  of the reduced system state to a unique invariant Gaussian quantum state with zero mean and the quantum covariance matrix $P+i\Theta $ specified by the matrix $P$ in (\ref{P}).

An infinite-dimensional extension of (\ref{Phist}) is the quasi-characteristic functional of the quantum process $X$ over the time interval $[0,t]$:
\begin{align}
\nonumber
  \wt{\Phi}(t, u)
   & :=
   \bE\re^{i\int_0^t X(s)^{\rT}\rd u(s)}\\
\nonumber
   & =
  \bE
  \re^{i\int_0^t (\re^{sA} X(0) + Y(0,s))^{\rT}\rd u(s)}\\
\label{Phit}
  & =
    \Phi
    \Big(
        0,
        \int_0^t
        \re^{s A^{\rT}}\rd u(s)
    \Big)
  \re^{-\int_0^t (\int_0^s C(s,\tau)\rd u(\tau))^{\rT} \rd u(s) },
\end{align}
which is computed (similarly to that of a classical Gaussian random process) for any $t\>0$ and any function $u:\mR_+\to \mR^n$ of locally bounded variation. The integrals in (\ref{Phit}) are understood as appropriate (operator and vector-matrix) versions of the Riemann-Stieltjes integral  \cite{Y_1980}. Also,
use is made of the two-point covariance matrix
\begin{align}
\nonumber
    C(s,\tau)
    & :=
    \Re \bE (Y(0,s)Y(0,\tau)^{\rT})\\
\label{EYY}
    & =
    \re^{(s-\tau)A}\Sigma(\tau)
    =
    C(\tau,s)^{\rT},
    \qquad
    s\> \tau \>0,
\end{align}
for the quantum process
$
    Y(0,s)
    =
    \int_0^s \re^{(s-\tau)A}B\rd W(\tau)
$
from (\ref{XY}) and  (\ref{Y}),
where the function $\Sigma$ is given by (\ref{Sigma}). The covariance function $C$ in (\ref{EYY}) is the kernel function of the quadratic form
\begin{align*}
    \bE
    \Big(
    \Big(
        \int_0^t
        Y(0,s)^{\rT}
        \rd u(s)
    \Big)^2
    \Big)
     & =
        \int_{[0,t]^2}
        \rd u(s)^{\rT}
        C(s,\tau)
        \rd u(\tau)\\
    & =
    2
    \int_0^t
    \Big(\int_0^s C(s,\tau)\rd u(\tau)\Big)^{\rT} \rd u(s).
\end{align*}
In application to a piece-wise constant function
$
    u(t) = \sum_{1\<k \<N:\ t_k \< t} v_k
$
with increments $v_1, \ldots, v_N \in \mR^n$ at an increasing sequence of moments of time  $0\< t_1 \< \ldots \< t_N$ (with an arbitrary $N=1,2,3,\ldots$), the quantity $\wt{\Phi}(t_N,u)$ in (\ref{Phit}) becomes the $N$-point QCF of the system variables:
\begin{align}
\nonumber
  \Phi_N(t_1,\ldots, t_N; v_1, \ldots,v_N)
  & :=
  \bE\re^{i\sum_{k=1}^N v_k^{\rT}X(t_k)}\\
\label{PhiN}
  & =
    \Phi
    \Big(
        0,
        \sum_{k=1}^N
        \re^{t_k A^{\rT}}
        v_k
    \Big)\re^{-\frac{1}{2}\sum_{j,k=1}^N
  v_j^{\rT}C(t_j,t_k)v_k}.
\end{align}
Although (\ref{PhiN}) is a particular case of the quasi-characteristic  functional (\ref{Phit}), it is also  possible to obtain (\ref{Phit}) from (\ref{PhiN})  by approximating the integrals 
with appropriate Riemann-Stieltjes  sums.
The multi-point QCFs $\Phi_N$ with different values of $N$  are related to each other by
\begin{equation}
\label{Phidiff2}
    \Phi_N (t_1, \ldots, t_{N-1},t_{N-1}; v_1, \ldots,v_N)
     =
     \Phi_{N-1}(t_1, \ldots, t_{N-1}; v_1, \ldots,v_{N-1}, v_{N-1}+v_N).
\end{equation}
Furthermore, a multi-point extension of (\ref{Phist}) is the recurrence relation
\begin{align}
\nonumber
    \Phi_N(t_1,\ldots, t_N; v_1, \ldots, v_N)
    & = \bE \re^{i\sum_{k=1}^{N-1}v_k^{\rT}X(t_k) + iv_N^{\rT}(\re^{(t_N-t_{N-1})A}X(t_{N-1}) + Y(t_{N-1},t_N))}\\
\nonumber
     & =
     \bE \re^{i\sum_{k=1}^{N-1}v_k^{\rT}X(t_k) + iv_N^{\rT}\re^{(t_N-t_{N-1})A}X(t_{N-1})}
     \bE\re^{iv_N^{\rT}Y(t_{N-1},t_N)}\\
\label{PhiNst}
     & =
     \Phi_{N-1}
     \big(
        t_1,\ldots, t_{N-1};
        v_1,\ldots, v_{N-1} + \re^{(t_N-t_{N-1})A^{\rT}}v_N
     \big)
    \re^{-\frac{1}{2}\|v_N\|_{\Sigma(t_N-t_{N-1})}^2}
\end{align}
which holds for all $N=2,3,4,\ldots$, where use is made of (\ref{Sigma}), (\ref{XY}) and (\ref{Phidiff2}). It follows from (\ref{PhiNst}) by induction  that
$
    \ln \tfrac{\Phi_N(t_1,\ldots, t_N; v_1, \ldots, v_N)}{\Phi\big(0,\sum_{k=1}^N \re^{t_kA^{\rT}}v_k\big)}
$
is a quadratic function of the vectors $v_1, \ldots, v_N\in \mR^n$ whose coefficients depend on the  time arguments $t_1, \ldots, t_N$ only through their differences $t_k-t_{k-1}$. Therefore, since the matrix $A$ is assumed to be Hurwitz, the limit
\begin{equation}
\label{limttt}
    \lim_{t\to +\infty}\Phi_N(t_1+t,\ldots, t_N+t; v_1, \ldots, v_N)
    =
    \re^{-\frac{1}{2}
    \sum_{j,k=1}^N
    v_j^{\rT} V(t_j-t_k)v_k}
\end{equation}
is the QCF of a multi-point Gaussian quantum state with zero mean and quantum covariance matrix whose real part is identical to the covariance matrix of a homogeneous Gaussian Markov diffusion process in $\mR^n$ considered at the moments of time $0, t_2-t_1, \ldots, t_N-t_1$. The covariance function $V$ of this auxiliary classical process in (\ref{limttt}) is computed as
\begin{equation}
\label{V}
    V(\tau)
    =
    V(-\tau)^{\rT}
    =
    \re^{\tau A} P,
    \qquad
    \tau \>0,
\end{equation}
where $P$ is the matrix from (\ref{P}). Due to the quantum nature of the setting under consideration,   the $\mC^{n\x n}$-valued function
\begin{equation}
\label{SVLambda}
    S(\tau)
    :=
    V(\tau) + i\Lambda(\tau)
    =
    S(-\tau)^*
\end{equation}
is a positive semi-definite Hermitian kernel (in the sense that so is the matrix $(S(\tau_j-\tau_k))_{1\< j,k\< N}$ for all $\tau_1, \ldots, \tau_N\> 0$ and $N=1,2,3,\ldots$). Here, $(\cdot)^*:= (\overline{(\cdot)})^{\rT}$ denotes the complex conjugate transpose, and
\begin{equation}
\label{Lambda}
    \Lambda(\tau)
    =
    -\Lambda(-\tau)^{\rT}
    =
    \re^{\tau A}\Theta,
    \qquad
    \tau \> 0,
\end{equation}
describes the two-point commutator matrix for the system variables:
\begin{equation}
\label{XsXtcomm}
    [X(s), X(t)^{\rT}]
    =
    2i\Lambda(s-t),
    \qquad
    s,t\> 0,
\end{equation}
where use is made of (\ref{XY})--(\ref{XYcom}). At the same time, $S$ in (\ref{SVLambda}) describes the covariance function of a classical stationary Gaussian Markov diffusion process in the complex Euclidean space $\mC^n$, which will be discussed in Section~\ref{sec:CCCP}.

As in the one-point case (\ref{limPhi}), the convergence of the multi-point QCFs in (\ref{limttt}) holds regardless of whether the initial system state is Gaussian (or whether (\ref{finsec}) is satisfied).
However, a Gaussian (but not necessarily invariant) initial  state gives rise to a multi-point Gaussian state associated with $X(t_1), \ldots, X(t_N)$ at any future moments of time $t_N\> \ldots \> t_1\> 0$ for any $N=1,2,3,\ldots$. The quantum covariance matrices for such states are specified by the two-point quantum covariance  function of the system variables given by
\begin{align}
\nonumber
    \cov(X(t),X(s))
    & :=
    \bE(X(t)X(s)^{\rT}) - \bE X(t)\bE X(s)^{\rT}\\
\label{Xcovts}
    & =
    \re^{(t-s)A}
    \cov(X(s)),\\
\nonumber
    \cov(X(s))
    & :=
    \cov(X(s),X(s))\\
\label{Xcovs}
    & =
    \re^{sA}\cov(X(0))\re^{sA^{\rT}} + \Sigma(s) + i\Theta.
\end{align}
The relations (\ref{Xcovts}) and (\ref{Xcovs}) follow from the decomposition (\ref{XY}) and the fact that the entries of $X(s)$ commute with and are statistically independent of (and hence, uncorrelated with) those of $Y(s,t)$ in (\ref{Y}).

While the mean square performance criteria in the quantum LQG control and filtering problems \cite{EB_2005,NJP_2009,VP_2013a,VP_2013b} are based on one-point second-order moments of the system variables,
the multi-point Gaussian states involve the covariance functions (\ref{Xcovts}). These two-point moments are also involved in the risk-sensitive \cite{DDJW_2006,J_2004,J_2005} cost functionals which penalize higher-order moments of integral quantities.

\section{Quadratic-exponential functional}\label{sec:QEF}

Consider an adapted  quantum process $\varphi$ defined for any time $t\> 0$ as the integral of a  quadratic function $\psi$  of the system variables over the time interval $[0,t]$:
\begin{equation}
\label{phi_psi}
    \varphi(t)
    :=
    \int_0^t
    \psi(s)
    \rd s,
    \qquad
    \psi(s)
    :=
    X(s)^{\rT} \Pi X(s).
\end{equation}
Here, $\Pi$ is a given real symmetric matrix of order $n$, so that $\varphi(t)$ and $\psi(t)$ are self-adjoint operators on the system-field space $\fH$ in (\ref{fH}). Furthermore, if $\Pi\succcurlyeq 0$, then
\begin{equation}
\label{psiZ}
    \psi
    =
    Z^{\rT}Z
    =
    \sum_{k=1}^n
    Z_k^2,
    \qquad
    Z:=
    \begin{bmatrix}
        Z_1\\
        \vdots\\
        Z_n
    \end{bmatrix}
    :=
    \sqrt{\Pi} X,
\end{equation}
and both $\varphi(t)$ and $\psi(t)$ are positive semi-definite. Now, consider the
exponential moment of the operator $\varphi(t)$   from (\ref{phi_psi}) given by
\begin{equation}
\label{QEF}
    \Xi_{\theta}(t)
    :=
    \bE \re^{\theta\varphi(t)}
    =
    \bE\re^{\int_0^t X(s)^{\rT} \Pi X(s)\rd s},
\end{equation}
where $\theta$ is a nonnegative real-valued  parameter (which is assumed to be sufficiently small in order for $\Xi_{\theta}(t)$ to be finite).  The dependence of $\Xi_{\theta}(t)$ on the matrix $\Pi$ through (\ref{phi_psi}) is omitted for brevity.   The quadratic-exponential functional (QEF) in (\ref{QEF}) can be regarded as an alternative to the quantum risk-sensitive performance  criterion in \cite{DDJW_2006,J_2004,J_2005}. The latter was defined as a weighted mean square value of a time-ordered exponential satisfying an operator differential equation (see \cite[Eqs. (19)--(21)]{J_2005}). In fact, the QEF is a straightforward quantum version of its classical predecessors \cite{BV_1985,J_1973,W_1981}.
For any given time $t\> 0$, the quantity $\Xi_{\theta}(t)$ is the moment-generating function for the quantum variable $\varphi(t)$ in the sense that
\begin{equation}
\label{Ephir}
    \bE(\varphi(t)^k)
    =
    \d_{\theta}^k \Xi_{\theta}(t)\big|_{\theta =0}
\end{equation}
for any positive integer $k$ for which the moment exists.  In particular, if the system variables have finite second moments (\ref{finsec}), then the asymptotic behaviour of the QEF for small values of $\theta$ is described by
\begin{equation}
\label{Tay}
    \Xi_{\theta}(t) = 1 + \theta \int_0^t \bra \Pi,  P(s)\ket\rd s + o(\theta),
    \qquad
    \theta \to 0,
\end{equation}
where $\bra K, N\ket:= \Tr(K^*N)$ denotes the Frobenius inner product \cite{HJ_2007} of real or complex matrices $K$ and $N$. Here,
\begin{equation}
\label{Pt}
    P(t):= \Re \bE\big(X(t)X(t)^{\rT}\big)
\end{equation}
is the real part of the matrix $\bE(XX^{\rT}) = P + i\Theta$ of second moments of the system variables governed by the Lyapunov ODE
\begin{equation}
\label{Pdot}
    \dot{P} = AP + PA^{\rT} + BB^{\rT}
\end{equation}
whose initial condition satisfies the generalized Heisenberg uncertainty principle $P(0)+i\Theta \succcurlyeq 0$ (see, for example, \cite{H_2001}).
Moreover, the first two terms on the right-hand side of (\ref{Tay}) provide a lower bound for the QEF. Indeed, for any fixed but otherwise arbitrary time $t\>0$, the self-adjoint operator $\varphi(t)$ can be regarded as a classical real-valued random variable with a probability distribution $E_t$.\footnote{This distribution
is related by $E_t(A):= \bE \sP_t(A)$ to the spectral measure $\sP_t$ of $\varphi(t)$, which is a projection-valued measure on the $\sigma$-algebra $\fB$ of Borel subsets of the real line satisfying $\sP_t(A)\sP_t(B) = \sP_t(A\bigcap B)$ for all $A,B\in \fB$ and the resolution of the identity property  $\sP_t(\mR) = \cI$; see, for example, \cite{H_2001}.} Hence,
\begin{align}
\nonumber
    \Xi_{\theta}(t)
    & =
    \int_{-\infty}^{+\infty} \re^{\theta x} E_t(\rd  x)\\
\nonumber
    & \>
    \int_{-\infty}^{+\infty} (1+\theta x) E_t(\rd x)
    =
    1 + \theta \bE \varphi(t)\\
\label{rex}
    & =
    1 + \theta \int_0^t \bE \psi(s)\rd s
    =
    1 + \theta\int_0^t \bra \Pi, P(s)\ket\rd s,
\end{align}
where use is also made of the inequality $\re^v \> 1+v$ on the real line, and $P$ is the function given by  (\ref{Pt}) and (\ref{Pdot}).    Furthermore, in combination with the first equality in (\ref{rex}) and convexity of the exponential function, the Jensen inequality leads to
\begin{align}
\nonumber
    \ln \Xi_{\theta}(t)
    & =
    \ln \int_{-\infty}^{+\infty} \re^{\theta x} E_t(\rd  x)\\
\nonumber
    & \>
    \theta \int_{-\infty}^{+\infty} x E_t(\rd x)
    =
    \theta \bE \varphi(t)\\
\label{Jen}
    & =
    \theta \int_0^t \bE \psi(s)\rd s
    =
    \theta\int_0^t \bra \Pi, P(s)\ket\rd s.
\end{align}
In view of (\ref{rex}) and (\ref{Jen}), the QEF $\Xi_{\theta}(t)$ in (\ref{QEF}) and its logarithm provide upper bounds for the mean square cost functional which is used in the quantum LQG control and filtering problems \cite{EB_2005,JNP_2008,NJP_2009}. An appropriate scaling represents the QEF  as a convex combination of the moments from (\ref{Ephir}):
\begin{equation}
\label{Ximom}
    \re^{-\theta}
    \Xi_{\theta}(t)
    =
    \re^{-\theta}
    \sum_{k=0}^{+\infty}
    \frac{\theta^k}{k!}
    \bE(\varphi(t)^k).
\end{equation}
Its coefficients $\frac{\theta^k}{k!}\re^{-\theta}$ constitute the Poisson probability mass function \cite{S_1996}  
with intensity parameter $\theta\>0$  and achieve their maximum at $k= \lfloor\theta\rfloor$, where $\lfloor \cdot\rfloor$ is the floor function. Therefore, $\theta$ determines the order of the moment of $\varphi(t)$, which is endowed with the largest weight in (\ref{Ximom}) and hence, is penalized most by the QEF as a cost functional. In comparison with the quadratic cost functionals in LQG control and filtering problems, the QEF  leads to minimizing not only the one-point second-order moments of the system variables but also their multi-point higher-order moments,  with the latter being subject to a stronger penalty for larger values of $\theta$.
A similar consideration applies to the cumulant-generating function
\begin{align}
\nonumber
    \ln \Xi_{\theta}(t)
    & =
    \sum_{r=1}^{+\infty}
    \frac{\theta^r}{r!}
    \bK_r(\varphi(t))\\
\label{expquad1}
    & =
    \theta
    \Big(
        \bE
        \varphi(t)
        +
        \frac{\theta}{2}
        \var(\varphi(t))
    \Big)
    +
    O(\theta^3),
    \qquad
    {\rm as}\
    \theta \to 0,
\end{align}
for the self-adjoint quantum variable $\varphi(t)$ as a classical random variable (with the distribution $E_t$ at a given moment of time $t$).
Here, the variance
$
    \var(\phi)
     :=
    \bE(\phi^2) - (\bE\phi)^2
    =
    \bK_2(\phi)
$
of a random variable $\phi$ is the second of the cumulants
\begin{equation}
\label{bK}
    \bK_r(\phi)
    :=
        \d_v^r
        \ln \bE \re^{v\phi}
    \big|_{v=0}
     =
    P_r(\bE \phi, \ldots, \bE (\phi^r)).
\end{equation}
For any $r\>1$,
the $r$th cumulant $\bK_r(\phi)$ is expressed in terms of the
first $r$ moments $\bE \phi, \ldots, \bE (\phi^r)$ through
the polynomial
\begin{align}
\nonumber
    P_r(\mu_1, \ldots, \mu_r)
    & =
    -r!
    \sum_{k=1}^r
    \frac{(-1)^k}{k}
    \sum_{j_1,\ldots, j_k\> 1:\ j_1+\ldots+j_k = r}\
    \prod_{s=1}^k
    \frac{\mu_{j_s}}{j_s!}\\
\label{bKP}
    & =
    \mu_r
    -r!
    \sum_{k=2}^r
    \frac{(-1)^k}{k}
    \sum_{j_1,\ldots, j_k\> 1:\ j_1+\ldots+j_k = r}\
    \prod_{s=1}^k
    \frac{\mu_{j_s}}{j_s!},
\end{align}
whose coefficients do not depend on the probability distribution of $\phi$. In particular, the first three polynomials are given by
\begin{align*}
    P_1(\mu_1)
    & =
    \mu_1,\\
    P_2(\mu_1, \mu_2)
    & =
    \mu_2 - \mu_1^2,\\
    P_3(\mu_1, \mu_2, \mu_3)
     & =
    \mu_3 - 3\mu_1\mu_2 + 2\mu_1^3.
\end{align*}
In view of (\ref{phi_psi}), the first two terms on the right-hand side of (\ref{expquad1}) involve the second and fourth moments of the system variables over the time interval $[0,t]$ and will be employed in Section~\ref{sec:quart} for approximating the QEF.

As mentioned above, the QEF (\ref{QEF}) differs from its predecessor in \cite{J_2004} and \cite[Eqs. (19)--(21)]{J_2005} which was used for measurement-based risk-sensitive  quantum control problems and defined in terms of an operator differential equation. Nevertheless, the QEF satisfies an integro-differential equation with somewhat similar features, which will be provided by Theorem~\ref{th:ODE} below.
For its formulation, we will need an auxiliary integral operator which is introduced by the following lemma.  To this end, for any time $t\>0$, let $\mho_t$ denote a linear map which acts as
\begin{align}
\nonumber
    \mho_t(\alpha, \beta)
    := &
        \Re
        \Big(
            X(t)^{\rT}
            \int_0^t
            \alpha(\sigma)
            X(\sigma)
            \rd \sigma
        \Big)\\
\label{mho}
        & +
        \int_{[0,t]^2}
        X(\sigma)^{\rT}
        \beta(\sigma,\tau)
        X(\tau)
        \rd \sigma
        \rd \tau
\end{align}
on a pair of locally integrable matrix-valued functions $\alpha: \mR_+\to \mR^{n\x n}$ and $\beta:\mR_+^2 \to \mR^{n\x n}$, the second of which satisfies
\begin{equation}
\label{betasym}
    \beta(\sigma,\tau)^{\rT} = \beta(\tau,\sigma)
\end{equation}
for all $\sigma,\tau\>0$. Here, the real part $\Re(\cdot)$ is extended to operators as $\Re \xi:= \frac{1}{2}(\xi+\xi^{\dagger})$. In view of (\ref{betasym}), the image $\mho_t(\alpha,\beta)$ in (\ref{mho}) is a self-adjoint quantum variable on $\fH$ which depends in a quadratic fashion on the past history of the system variables of the OQHO over the time interval $[0,t]$.

\begin{lemma}
\label{lem:intop}
For any given $t\>0$, the composition of the map $\mho_t$ in (\ref{mho}) with the commutator $\ad_{\varphi(t)}(\cdot):= [\varphi(t),\cdot]$, associated  with the process $\varphi$ in (\ref{phi_psi}), can be represented as
\begin{equation}
\label{admho}
    \ad_{\varphi(t)} \mho_t = i\mho_t \Gamma_t.
\end{equation}
Here, $\Gamma_t$ is a linear integral operator which maps the function pair $(\alpha, \beta)$ 
in (\ref{mho}) and (\ref{betasym}) to the pair $(\wt{\alpha}, \wt{\beta}) = \Gamma_t(\alpha,\beta)$ of such functions as
\begin{align}
\label{alphanew}
    \wt{\alpha}(\sigma)
    := &
    -4
    \int_0^t
    \alpha(s)\Lambda(s-\sigma)
    \rd s
    \Pi,\\
\nonumber
    \wt{\beta}(\sigma,\tau)
    := &
     2(\Pi \Lambda(\sigma-t)\alpha(\tau)-\alpha(\sigma)^{\rT}\Lambda(t-\tau)\Pi)\\
\label{betanew}
    & + 4
    \int_0^t
    \big(
        \Pi \Lambda(\sigma-s)\beta(s,\tau)
        -
        \beta(\sigma,s)\Lambda(s-\tau) \Pi
    \big)
    \rd s,
\end{align}
where $\Lambda$ is the two-point commutator function of the system variables from (\ref{Lambda}) and (\ref{XsXtcomm}).\hfill$\square$
\end{lemma}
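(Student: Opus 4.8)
The plan is to exploit that both $\varphi(t)$ in (\ref{phi_psi}) and the self-adjoint variable $\mho_t(\alpha,\beta)$ in (\ref{mho}) are quadratic functionals of the past history $\{X(s):\, 0\< s\< t\}$ of the system variables. Since the commutator is bilinear and satisfies $[\xi\eta,\zeta] = \xi[\eta,\zeta]+[\xi,\zeta]\eta$, the commutator of two such quadratic functionals is again a quadratic functional of the same history, with all non-commutativity entering only through the scalar two-point commutator $[X(s),X(\sigma)^{\rT}] = 2i\Lambda(s-\sigma)$ from (\ref{XsXtcomm}), which plays the role of a contraction. I would therefore start from $\ad_{\varphi(t)}\mho_t(\alpha,\beta) = \int_0^t [X(s)^{\rT}\Pi X(s),\,\mho_t(\alpha,\beta)]\rd s$, evaluate the integrand by the quadratic-commutator lemma of Appendix~\ref{sec:quadcom}, and then show that the resulting quadratic functional again lies in the range of $\mho_t$, reading off its two coefficient functions.

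For the single-integral ($\alpha$) part $\Re\big(X(t)^{\rT}\int_0^t\alpha(\sigma)X(\sigma)\rd\sigma\big)$, the expansion of $[X(s)^{\rT}\Pi X(s),\, X(t)^{\rT}\alpha(\sigma)X(\sigma)]$ produces four groups of quadratic terms. In two of them the contraction $[X(s),X(\sigma)^{\rT}] = 2i\Lambda(s-\sigma)$ eliminates the interior variable and leaves a product bound to the endpoint $X(t)$; after integration these are single integrals of the form $X(t)^{\rT}(\cdots)X(\cdot)$ and assemble into the new single-integral term, yielding $\wt\alpha$ in (\ref{alphanew}). In the other two the contraction $[X(s),X(t)^{\rT}] = 2i\Lambda(s-t)$ eliminates the endpoint and leaves an interior product $X(s)X(\sigma)$; these are genuine interior double integrals and furnish the boundary contribution $2(\Pi\Lambda(\sigma-t)\alpha(\tau) - \alpha(\sigma)^{\rT}\Lambda(t-\tau)\Pi)$ in (\ref{betanew}). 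For the double-integral ($\beta$) part $\int_{[0,t]^2}X(\sigma)^{\rT}\beta(\sigma,\tau)X(\tau)\rd\sigma\rd\tau$, no endpoint variable is present, so every surviving term of $[X(s)^{\rT}\Pi X(s),\, X(\sigma)^{\rT}\beta(\sigma,\tau)X(\tau)]$ is an interior double integral, contracted through $\Lambda(s-\sigma)$ or $\Lambda(s-\tau)$; after integrating out the contracted time these collapse to the convolution-type term $4\int_0^t(\Pi\Lambda(\sigma-s)\beta(s,\tau) - \beta(\sigma,s)\Lambda(s-\tau)\Pi)\rd s$ in (\ref{betanew}).

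It then remains to reassemble the contributions as $i\mho_t(\wt\alpha,\wt\beta)$. I would combine each term with its adjoint using the real-part operation $\Re\xi = \tfrac12(\xi+\xi^{\dagger})$ and the antisymmetry $\Lambda(-\tau)^{\rT} = -\Lambda(\tau)$ from (\ref{Lambda}) to fold the endpoint-bound terms into a single $\Re\big(X(t)^{\rT}(\cdots)\big)$ expression and the interior terms into a symmetric kernel, which accounts for the numerical factors $2$ and $4$ and the overall $i$ (so that the commutator of the self-adjoint operators is $i$ times a self-adjoint quadratic functional). A concluding check verifies that the resulting kernel satisfies $\wt\beta(\sigma,\tau)^{\rT} = \wt\beta(\tau,\sigma)$, so that $(\wt\alpha,\wt\beta)$ is again an admissible pair in the sense of (\ref{betasym}). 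The main obstacle is entirely in the bookkeeping: correctly tracking the transposes and the direction of the time argument of $\Lambda$ (note that $s-t\< 0$ throughout the $\varphi$-integral, so the extension $\Lambda(-\tau) = -\Lambda(\tau)^{\rT}$ must be applied repeatedly), and confirming that the numerous elementary quadratic terms collapse exactly onto the two stated functions without leaving residual terms.
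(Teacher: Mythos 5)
Your proposal follows essentially the same route as the paper's own proof: write $\ad_{\varphi(t)}\mho_t(\alpha,\beta)=\int_0^t[\psi(s),\mho_t(\alpha,\beta)]\,\rd s$, evaluate the integrands via the quadratic-commutator lemma of Appendix~\ref{sec:quadcom} together with the two-point CCRs (\ref{XsXtcomm}), and regroup the endpoint-bound terms into the $\wt\alpha$ part and the interior double integrals into the $\wt\beta$ part, checking (\ref{betasym}) at the end. Your sorting of the contractions (the $[X(s),X(\sigma)^{\rT}]$ contractions feeding $\wt\alpha$ and the $[X(s),X(t)^{\rT}]$ contractions feeding the boundary term of $\wt\beta$) matches the paper's computation exactly.
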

\begin{proof}
Note that
\begin{align}
\nonumber
    [\xi, \Re\eta]
    & =
    \frac{1}{2}
    ([\xi, \eta] + [\xi,\eta^{\dagger}])
    =
    \frac{1}{2}
    ([\xi, \eta] + [\eta,\xi^{\dagger}]^{\dagger})\\
\label{xiReeta}
    & =
    \frac{1}{2}
    ([\xi, \eta] - [\xi,\eta]^{\dagger})= i \Im[\xi,\eta]
\end{align}
for any self-adjoint operator $\xi$ and an arbitrary operator $\eta$, where $\Im \zeta := \frac{1}{2i}(\zeta -\zeta^{\dagger})$ is the operator extension of the imaginary part $\Im(\cdot)$.
 A combination of (\ref{phi_psi}),  (\ref{mho}) and (\ref{xiReeta}) implies that
\begin{align}
\nonumber
    [\varphi(t), \mho_t(\alpha, \beta)]
    = &
    \int_0^t
    [\psi(s), \mho_t(\alpha, \beta)]
    \rd s\\
\nonumber
    = &
    \int_0^t
    \Big[
        \psi(s),
        \Re
        \Big(
        X(t)^{\rT}
        \int_0^t
        \alpha(\sigma)
        X(\sigma)
        \rd \sigma
        \Big)
    \Big]
    \rd s\\
\nonumber
    & +
    \int_0^t
    \Big[
        \psi(s),
        \int_{[0,t]^2}
        X(\sigma)^{\rT}
        \beta(\sigma,\tau)
        X(\tau)
        \rd \sigma
        \rd \tau
    \Big]
    \rd s\\
\nonumber
    = &
    i
    \Im
    \int_{[0,t]^2}
    [
        \psi(s),
        X(t)^{\rT}
        \alpha(\sigma)
        X(\sigma)
    ]
    \rd s
    \rd \sigma\\
\label{phimho}
    & +
    \int_{[0,t]^3}
    [
        \psi(s),
        X(\sigma)^{\rT}
        \beta(\sigma,\tau)
        X(\tau)
    ]
    \rd s
    \rd \sigma
    \rd \tau.
\end{align}
Now, the quadratic polynomials $\sum_{j=1}^r \sum_{k=1}^s \gamma_{jk}X_j(\sigma_j)X_k(\tau_k)$ of the system variables of the OQHO, considered at arbitrary moments of time $\sigma_1, \ldots, \sigma_r$ and $\tau_1,\ldots, \tau_s$, with complex coefficients $\gamma_{jk}$ (or integrals of such polynomials), form a Lie algebra with respect to the commutator; see Appendix~\ref{sec:quadcom} and references therein.
Furthermore, the two-point CCRs (\ref{XsXtcomm}) allow (\ref{quadcom2}) of Lemma~\ref{lem:quadcom} to be applied to the quadratic functions of the system variables on the right-hand side of (\ref{phimho}) as
\begin{align}
\nonumber
        [
        \psi(s),
        X(t)^{\rT}
        \alpha(\sigma)
        X(\sigma)
    ]
    & =
        [
        X(s)^{\rT}\Pi X(s),
        X(t)^{\rT}
        \alpha(\sigma)
        X(\sigma)
    ]\\
\nonumber
   & =
    4i
    \begin{bmatrix}
        X(s) \\
        X(t)
    \end{bmatrix}^{\rT}
    \begin{bmatrix}
      0 & \Pi \Lambda(s-t)\alpha(\sigma)\\
      \alpha(\sigma)\Lambda(s-\sigma)^{\rT} \Pi & 0
    \end{bmatrix}
    \begin{bmatrix}
        X(s) \\
        X(\sigma)
    \end{bmatrix}\\
\label{comm1}
    & =
    4i
    \big(
        X(s)^{\rT}\Pi \Lambda(s-t)\alpha(\sigma)X(\sigma)
        -
        X(t)^{\rT} \alpha(\sigma)\Lambda(\sigma-s)\Pi X(s)
    \big),\\
\nonumber
    [
        \psi(s),
        X(\sigma)^{\rT}
        \beta(\sigma,\tau)
        X(\tau)
    ]
    & =
    [
        X(s)^{\rT}\Pi X(s),
        X(\sigma)^{\rT}
        \beta(\sigma,\tau)
        X(\tau)
    ]\\
\nonumber
    & =
    4i
    \begin{bmatrix}
        X(s) \\
        X(\sigma)
    \end{bmatrix}^{\rT}
    \begin{bmatrix}
      0 & \Pi \Lambda(s-\sigma)\beta(\sigma,\tau)\\
      \beta(\sigma,\tau)\Lambda(s-\tau)^{\rT} \Pi & 0
    \end{bmatrix}
    \begin{bmatrix}
        X(s) \\
        X(\tau)
    \end{bmatrix}\\
\label{comm2}
    & =
    4i
    \big(
        X(s)^{\rT}\Pi \Lambda(s-\sigma)\beta(\sigma,\tau)X(\tau)
        -
        X(\sigma)^{\rT}\beta(\sigma,\tau)\Lambda(\tau-s) \Pi X(s)
    \big),
\end{align}
where the first equality from (\ref{Lambda}) has also been taken into account.  Substitution of (\ref{comm1}) and (\ref{comm2}) into (\ref{phimho}) leads to
\begin{align}
\nonumber
    [\varphi(t), \mho_t(\alpha, \beta)]
    = &
    4i
    \Re
    \int_{[0,t]^2}
    \big(
        X(s)^{\rT}\Pi \Lambda(s-t)\alpha(\sigma)X(\sigma)
        -
        X(t)^{\rT} \alpha(\sigma)\Lambda(\sigma-s)\Pi X(s)
    \big)
    \rd s
    \rd \sigma\\
\nonumber
    & +
    4i
    \int_{[0,t]^3}
    \big(
        X(s)^{\rT}\Pi \Lambda(s-\sigma)\beta(\sigma,\tau)X(\tau)
        -
        X(\sigma)^{\rT}\beta(\sigma,\tau)\Lambda(\tau-s) \Pi X(s)
    \big)
    \rd s
    \rd \sigma
    \rd \tau\\
\nonumber
    = &
    -4i
    \Re
    \Big(
    X(t)^{\rT}
    \int_{[0,t]^2}
        \alpha(s)\Lambda(s-\sigma)\Pi X(\sigma)
    \rd s
    \rd \sigma
        \Big)\\
\nonumber
    & +
    4i
    \Re
    \int_{[0,t]^2}
        X(\sigma)^{\rT}\Pi \Lambda(\sigma-t)\alpha(\tau)X(\tau)
    \rd \sigma
    \rd \tau\\
\label{phimho1}
    & +
    4i
    \int_{[0,t]^3}
    X(\sigma)^{\rT}
    \big(
        \Pi \Lambda(\sigma-s)\beta(s,\tau)
        -
        \beta(\sigma,s)\Lambda(s-\tau) \Pi
    \big)
    X(\tau)
    \rd s
    \rd \sigma
    \rd \tau,
\end{align}
where use is also made of appropriate permutations of the integration variables. By grouping the terms on the right-hand side of (\ref{phimho1}) and comparing the result with (\ref{mho}), it follows that
\begin{equation}
\label{phimho2}
    [\varphi(t), \mho_t(\alpha, \beta)]
    =
    i\mho_t(\wt{\alpha}, \wt{\beta})
    =
    i\mho_t (\Gamma_t(\alpha,\beta)),
\end{equation}
where the functions $\wt{\alpha}$ and $\wt{\beta}$ are related to $\alpha$ and $\beta$ through the linear operator $\Gamma_t$ specified by (\ref{alphanew}) and (\ref{betanew}). The function $\wt{\beta}$ in (\ref{betanew}) inherits the property (\ref{betasym}) due to the symmetry of the matrix $\Pi$ and the first equality in (\ref{Lambda}). The fulfillment of (\ref{phimho2}) for arbitrary functions $\alpha$ and $\beta$, described above, establishes (\ref{admho}).
\end{proof}

Lemma~\ref{lem:intop} shows that the class of quantum variables $\mho_t(\alpha,\beta)$, described by (\ref{mho}) for admissible functions $\alpha$ and $\beta$ is invariant under the action of $i\ad_{\varphi(t)}$. Moreover, (\ref{admho}) extends as
\begin{equation}
\label{fadmho}
    f(i\ad_{\varphi(t)}) \mho_t = \mho_t f(-\Gamma_t)
\end{equation}
to entire functions $f$ of a complex variable evaluated at operators. The nature of the operator $f(-\Gamma_t)$ in (\ref{fadmho}) is simpler than that of $f(i\ad_{\varphi(t)})$. Indeed, $\Gamma_t$ (for any fixed but otherwise arbitrary $t\>0$) in (\ref{alphanew}), (\ref{betanew}) is a bounded operator on the Banach space of pairs of integrable $\mR^{n\x n}$-valued functions on the time interval $[0,t]$, described above, and hence, so also is the operator $f(-\Gamma_t)$. This boundedness follows from the uniform boundedness of the function $\Lambda$ in (\ref{Lambda}) due to the matrix $A$ being Hurwitz.

Also, for the purposes of the following theorem, we will need a time-varying change of the density operator in (\ref{rho}):
\begin{equation}
\label{rhot}
    \rho_{\theta, t}
    :=
    \frac{1}{\Xi_{\theta}(t)}
    \re^{\frac{\theta}{2} \varphi(t)} \rho \re^{\frac{\theta}{2} \varphi(t)}.
\end{equation}
The property that $\rho_{\theta, t}$ is a density operator is ensured by its self-adjointness, positive semi-definiteness and the unit trace
\begin{align*}
    \Tr \rho_{\theta, t}
     & =
    \frac{1}{\Xi_{\theta}(t)}
    \Tr \Big(\re^{\frac{\theta}{2} \varphi(t)} \rho \re^{\frac{\theta}{2} \varphi(t)}\Big)\\
     & =
    \frac{1}{\Xi_{\theta}(t)}
    \Tr \big(\rho \re^{\theta \varphi(t)}\big)
    =
    \frac{1}{\Xi_{\theta}(t)}
    \bE \re^{\theta \varphi(t)}=1
\end{align*}
in view of (\ref{QEF}).
The expectation over the modified density operator $\rho_{\theta, t}$ in (\ref{rhot}) is denoted by $\bE_{\theta, t}$ and computed as
\begin{align}
\nonumber
  \bE_{\theta, t}\xi
  & :=
  \Tr(\rho_{\theta,t}\xi)\\
\nonumber
  & =
    \frac{1}{\Xi_{\theta}(t)}
    \Tr
    \big(
    \re^{\frac{\theta}{2} \varphi(t)} \rho \re^{\frac{\theta}{2} \varphi(t)}
    \xi
    \big)\\
\label{bEt}
    & =
    \frac{1}{\Xi_{\theta}(t)}
    \bE
    \big(
    \re^{\frac{\theta}{2} \varphi(t)}
    \xi
    \re^{\frac{\theta}{2} \varphi(t)}
    \big).
\end{align}
In the case $\theta=0$, the  definitions (\ref{rhot}) and (\ref{bEt}) reproduce the original density operator $\rho_{0,t}=\rho$ and the original expectation $\bE_{0,t}=\bE$.

\begin{theorem}
\label{th:ODE}
As a function of time, the QEF $\Xi_{\theta}(t)$ in (\ref{QEF}), associated with the OQHO (\ref{dX}), satisfies the integro-differential equation
\begin{equation}
\label{dotQEF}
    \d_t \ln \Xi_{\theta}(t)
    =
    \theta
    \bE_{\theta,t}
    \big(
        \psi(t) +
        \Ups_{\theta}(t)
    \big),
\end{equation}
where $\bE_{\theta,t}$ is the modified expectation (\ref{bEt}), and  the process $\psi$ is given by (\ref{phi_psi}). Here,
$\Ups_{\theta}(t)$ is a time-varying self-adjoint operator which depends in a quadratic fashion on the past history of the system variables according to (\ref{mho}) as
\begin{align}
\nonumber
    \Ups_{\theta}(t)
    := &
    \frac{\theta}{2}
    \mho_t(\alpha_{\theta,t}, \beta_{\theta,t})
    \\
\nonumber
    = &
    \frac{\theta}{2}
    \left(
        \Re
        \Big(
            X(t)^{\rT}
            \int_0^t
            \alpha_{\theta,t}(\sigma)
            X(\sigma)
            \rd \sigma
        \Big)
    \right.\\
\label{Ups0}
        &+
    \left.
        \int_{[0,t]^2}
        X(\sigma)^{\rT}
        \beta_{\theta,t}(\sigma,\tau)
        X(\tau)
        \rd \sigma
        \rd \tau
    \right).
\end{align}
Also, the pair of $\mR^{n\x n}$-valued functions $\alpha_{\theta, t}$ and $\beta_{\theta,t}$ (the second of which satisfies the symmetry property (\ref{betasym})) is computed as the image
\begin{equation}
\label{kerfun}
    (\alpha_{\theta, t}, \beta_{\theta,t})
    =
    f\Big(\frac{\theta}{2}\Gamma_t\Big)(\wh{\alpha}_t,0)
\end{equation}
of the pair $(\wh{\alpha}_t,0)$ under the linear operator $f\big(\frac{\theta}{2}\Gamma_t\big)$, where the integral operator $\Gamma_t$ is described in Lemma~\ref{lem:intop}. Also, the function $\wh{\alpha}_t$ is given by
\begin{equation}
\label{alphahat}
  \wh{\alpha}_t(\sigma) := -8\Pi \Lambda(t-\sigma) \Pi
\end{equation}
in terms of the two-point commutator function $\Lambda$ of the system variables from (\ref{Lambda}) and (\ref{XsXtcomm}). Furthermore,
\begin{equation}
\label{f}
    f(z)
    :=
    \left\{
    \begin{matrix}
        0 & {\rm if}\ z= 0\\
        \frac{\sin z-z}{z^2} & {\rm otherwise}\end{matrix}
    \right.
    =
    \sum_{k=1}^{+\infty}
    (-1)^k
    \frac{z^{2k-1}}{(2k+1)!}
\end{equation}
is an entire odd function of a complex variable which is evaluated in (\ref{kerfun}) at a bounded  operator $\frac{\theta}{2}\Gamma_t$.
\hfill$\square$
\end{theorem}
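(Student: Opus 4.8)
The plan is to differentiate the operator exponential in~(\ref{QEF}) and then rewrite the result through the modified expectation~(\ref{bEt}). Since $\varphi(t)=\int_0^t\psi(s)\rd s$ has $\partial_t\varphi(t)=\psi(t)$, the Duhamel (Feynman--Wilcox) differentiation formula gives $\partial_t\re^{\theta\varphi(t)}=\theta\int_0^1\re^{a\theta\varphi(t)}\psi(t)\re^{(1-a)\theta\varphi(t)}\rd a$, whence $\partial_t\Xi_\theta(t)=\theta\int_0^1\bE\big(\re^{a\theta\varphi(t)}\psi(t)\re^{(1-a)\theta\varphi(t)}\big)\rd a$. Substituting $a=\tfrac12+b$ and using $\re^{b\theta\varphi(t)}\psi(t)\re^{-b\theta\varphi(t)}=\re^{b\theta\ad_{\varphi(t)}}\psi(t)$ symmetrises the two exponentials, turning the integrand into $\re^{\frac{\theta}{2}\varphi(t)}\big(\re^{b\theta\ad_{\varphi(t)}}\psi(t)\big)\re^{\frac{\theta}{2}\varphi(t)}$. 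Carrying out the $b$-integration via $\int_{-1/2}^{1/2}\re^{bz}\rd b=\tfrac{2\sinh(z/2)}{z}=:g(z)$ and recognising~(\ref{bEt}), I obtain $\partial_t\ln\Xi_\theta(t)=\theta\,\bE_{\theta,t}\big(g(\theta\ad_{\varphi(t)})\psi(t)\big)$.

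Next I would split off the $\ad$-free part. As $g$ is even with $g(0)=1$, one has $g(z)-1=z\,q(z)$ with $q(z):=\tfrac{2\sinh(z/2)-z}{z^2}$ odd and entire, so that $g(\theta\ad_{\varphi(t)})\psi(t)=\psi(t)+\theta\,q(\theta\ad_{\varphi(t)})\big(\ad_{\varphi(t)}\psi(t)\big)$. The single commutator $\ad_{\varphi(t)}\psi(t)=\int_0^t[\psi(s),\psi(t)]\rd s$ already lies in the range of $\mho_t$: the quadratic-commutator identity~(\ref{quadcom2}) of Lemma~\ref{lem:quadcom} gives $[\psi(s),\psi(t)]=4i\big(X(s)^{\rT}\Pi\Lambda(s-t)\Pi X(t)-X(t)^{\rT}\Pi\Lambda(t-s)\Pi X(s)\big)$, and the antisymmetry $\Lambda(t-s)^{\rT}=-\Lambda(s-t)$ from~(\ref{Lambda}) collapses the two terms into $-2\Re\big(X(t)^{\rT}\Pi\Lambda(t-s)\Pi X(s)\big)$. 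Integrating in $s$ then yields $\ad_{\varphi(t)}\psi(t)=-8i\,\Re\big(X(t)^{\rT}\int_0^t\Pi\Lambda(t-s)\Pi X(s)\rd s\big)=i\,\mho_t(\wh{\alpha}_t,0)$, with $\wh{\alpha}_t$ exactly the function~(\ref{alphahat}).

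The algebraic core is then the intertwining~(\ref{admho}) of Lemma~\ref{lem:intop}, which iterates to $\ad_{\varphi(t)}^k\mho_t=\mho_t(i\Gamma_t)^k$ and hence $F(\ad_{\varphi(t)})\mho_t=\mho_t F(i\Gamma_t)$ for any entire $F$. Pushing $q(\theta\ad_{\varphi(t)})$ through $\mho_t$ thus replaces $\ad_{\varphi(t)}$ by $i\Gamma_t$, and it is precisely this factor of $i$ that converts the hyperbolic sine into a trigonometric one. Writing $w:=\theta\Gamma_t$, the scalar check $i\,q(iw)=\tfrac{2\sin(w/2)-w}{w^2}=\tfrac12\,f(w/2)$, with $f$ the function in~(\ref{f}), shows that $\theta\,q(\theta\ad_{\varphi(t)})\big(i\,\mho_t(\wh{\alpha}_t,0)\big)=\mho_t\big(\tfrac{\theta}{2}f(\tfrac{\theta}{2}\Gamma_t)(\wh{\alpha}_t,0)\big)=\tfrac{\theta}{2}\mho_t(\alpha_{\theta,t},\beta_{\theta,t})=\Ups_\theta(t)$, matching~(\ref{kerfun}) and~(\ref{Ups0}). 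Substituting $g(\theta\ad_{\varphi(t)})\psi(t)=\psi(t)+\Ups_\theta(t)$ into the identity from the first step delivers~(\ref{dotQEF}).

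I expect the principal difficulty to be analytic rather than algebraic: rigorously justifying the Duhamel differentiation and the exchange of $\Tr(\rho\,\cdot)$ with the $a$- and $b$-integrals for the unbounded self-adjoint operator $\varphi(t)$, which hinges on finiteness of the QEF for the small $\theta>0$ under consideration. By contrast, the operator-calculus manipulations are clean once one invokes the boundedness of $\Gamma_t$ noted after Lemma~\ref{lem:intop} (so that $f(\tfrac{\theta}{2}\Gamma_t)$ is a bounded operator) together with the Lie-algebra closure of the quadratic functions of the system variables (Appendix~\ref{sec:quadcom}), which guarantees that every iterate $\ad_{\varphi(t)}^k\psi(t)$ remains a quadratic of the form covered by the calculus built on $\mho_t$.
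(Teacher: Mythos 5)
Your proposal is correct and follows essentially the same route as the paper's proof: the Duhamel/Magnus differentiation of $\re^{\theta\varphi(t)}$, symmetrisation of the two exponential factors, evaluation of the resulting $\sinhc$-type superoperator, reduction of $[\varphi(t),\psi(t)]$ to $i\,\mho_t(\wh{\alpha}_t,0)$ via Lemma~\ref{lem:quadcom}, and the intertwining $F(\ad_{\varphi(t)})\mho_t=\mho_t F(i\Gamma_t)$ from Lemma~\ref{lem:intop}, with the factor of $i$ converting the hyperbolic function into the trigonometric $f$ of (\ref{f}). Your $g$ and $q$ are just rescalings of the paper's $\sinhc$ and $g$, and your scalar check $i\,q(iw)=\tfrac12 f(w/2)$ reproduces the paper's relation $f(z)=ig(iz)$ exactly.
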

\begin{proof}
Application of the Magnus lemma on the differentiation of operator exponentials \cite{H_1996,K_1976,L_1968,M_1954,W_1967}  leads to
\begin{align}
\nonumber
    (\re^{\theta \varphi(t)})^{^\centerdot}
    & =
    \theta
    \int_0^1
    \re^{\lambda\theta \varphi(t)}
    \dot{\varphi}(t)
    \re^{(1-\lambda)\theta \varphi(t)}
    \rd \lambda\\
\nonumber
    & =
    \theta
    \re^{\frac{\theta}{2}\varphi(t)}
    \int_{-\frac{1}{2}}^{\frac{1}{2}}
    \re^{\lambda\theta \varphi(t)}
    \psi(t)
    \re^{-\lambda\theta \varphi(t)}
    \rd \lambda
    \re^{\frac{\theta}{2}\varphi(t)}
    \\
\nonumber
    & =
    \theta
    \re^{\frac{\theta}{2}\varphi(t)}
    \int_{-\frac{1}{2}}^{\frac{1}{2}}
    \re^{\lambda\theta \ad_{\varphi(t)}}
    (\psi(t))
    \rd \lambda
    \re^{\frac{\theta}{2}\varphi(t)}\\
\label{dotephi}
    & =
    \theta
    \re^{\frac{\theta}{2}\varphi(t)}
    \sinhc
    \Big(
        \frac{\theta}{2}
        \ad_{\varphi(t)}
    \Big)
    (\psi(t))
    \re^{\frac{\theta}{2}\varphi(t)}.
\end{align}
Here,
$
    \dot{\varphi}(t) = \psi(t)
$ is the time derivative
of the process $\varphi$ in (\ref{phi_psi}), and
$$
    \re^{\mu\varphi}
    \psi
    \re^{-\mu\varphi}
    =
    \re^{\mu \ad_{\varphi}}(\psi)
     =
    \sum_{k=0}^{+\infty}
    \frac{\mu^k}{k!}
    \ad_{\varphi}^k(\psi), 
$$  
where
$
    \ad_{\varphi}^k(\cdot):= \overbrace{[\varphi, [\varphi, \ldots [\varphi,}^{k\ {\rm times}} \,\cdot\,]\ldots]
$
is the $k$-fold iterate of the commutator $\ad_{\varphi(t)}(\cdot)$ (with $\ad_{\varphi}^0$ being the identity map by the standard convention).
Also,
\begin{align}
\nonumber
    \sinhc(z)
    &:=
    \frac{1}{2}
    \int_{-1}^1
    \re^{\lambda z}
    \rd \lambda
    =
    \left\{
    \begin{matrix}1 & {\rm if}\ z= 0\\
    \frac{\sinh z}{z} & {\rm otherwise}\end{matrix}
    \right.\\
\label{sinhc}
    & =
    \sum_{k=0}^{+\infty}
    \frac{z^{2k}}{(2k+1)!}
\end{align}
is a hyperbolic version of the sinc function (in the sense that $\sinhc(iz) = \sinc(z)$).
This is an entire even function of a complex variable $z$, which is evaluated in (\ref{dotephi}) at the superoperator $        \frac{\theta}{2}
        \ad_{\varphi}
$. Since $\sinhc (0)=1$ in view of (\ref{sinhc}), then
\begin{equation}
\label{sinhcad}
    \sinhc
    \Big(
        \frac{\theta}{2}
        \ad_{\varphi(t)}
    \Big)
    (\psi(t))
    =
    \psi(t) + \Ups_{\theta}(t),
\end{equation}
where
\begin{align}
\nonumber
    \Ups_{\theta}(t)
    & :=
    \sum_{k=1}^{+\infty}
    \frac{(\theta/2)^{2k}}{(2k+1)!}
    \ad_{\varphi(t)}^{2k}(\psi(t))\\
\label{Ups}
    & =
    \frac{\theta}{2}
    g
    \Big(
        \frac{\theta}{2}\ad_{\varphi(t)}
    \Big)([\varphi(t), \psi(t)]).
\end{align}
Here,
\begin{equation}
\label{g}
    g(z)
    :=
    \left\{
    \begin{matrix}
        0 & {\rm if}\ z= 0\\
        \frac{\sinhc(z)-1}{z} & {\rm otherwise}\end{matrix}
    \right.
    =
    \sum_{k=1}^{+\infty}
    \frac{z^{2k-1}}{(2k+1)!}
\end{equation}
is an entire odd function on the complex plane, which is applied to the superoperator  $\frac{\theta}{2}\ad_{\varphi(t)}$ in (\ref{Ups}), with the resulting superoperator acting on the (skew-Hermitian) quantum variable $[\varphi(t), \psi(t)]$. The self-adjointness of the operator $\Ups_{\theta}(t)$ follows directly from (\ref{Ups}). Indeed, since $\ad_{i\xi}(\eta) = i[\xi,\eta]$ is a self-adjoint operator for any self-adjoint operators $\xi$ and $\eta$, then so also is $\ad_{\xi}^2(\eta) = - \ad_{i\xi}^2(\eta)$, and, by induction, $\ad_{\xi}^{2k}(\eta)$ is self-adjoint for all  $k=1,2,3,\ldots$. Therefore, the series (\ref{Ups}) consists of self-adjoint operators due to the self-adjointness of $\varphi(t)$ and $\psi(t)$ in (\ref{phi_psi}).  Now, substitution of (\ref{sinhcad}) into (\ref{dotephi}) yields
\begin{equation}
\label{dotephi1}
    (\re^{\theta \varphi(t)})^{^\centerdot}
    =
    \theta
    \re^{\frac{\theta}{2}\varphi(t)}
    (\psi(t) + \Ups_{\theta}(t))
    \re^{\frac{\theta}{2}\varphi(t)}.
\end{equation}
By taking the expectation on both sides of (\ref{dotephi1}) and using (\ref{QEF}) together with the modified expectation from (\ref{bEt}), it follows that
\begin{align}
\nonumber
    \d_t \Xi_{\theta}(t)
    & =
    \bE
    \big(
        (\re^{\theta \varphi(t)})^{^\centerdot}
    \big)\\
\nonumber
    & =
    \theta
    \bE
    \Big(\re^{\frac{\theta}{2}\varphi(t)}
    (\psi(t) + \Ups_{\theta}(t))
    \re^{\frac{\theta}{2}\varphi(t)}    \Big)\\
\label{dotQEF1}
    & =
    \theta
    \Xi_{\theta}(t)
    \bE_{\theta,t}
    \big(
    \psi(t) + \Ups_{\theta}(t)
    \big).
\end{align}
Division of both parts of (\ref{dotQEF1}) by the QEF $\Xi_{\theta}(t)$ leads to its logarithmic derivative in (\ref{dotQEF}). 
We will now compute the process $\Ups_{\theta}$ in (\ref{Ups}). To this end,
use will be made of the Lie-algebraic property of quadratic polynomials of system variables of the OQHO, which was already employed in the proof of Lemma~\ref{lem:intop}. 
More precisely, in view of the two-point CCRs (\ref{XsXtcomm}), application of (\ref{quadcom2}) from Lemma~\ref{lem:quadcom} to the quadratic functions of the system variables in (\ref{phi_psi}) leads to
\begin{align}
\nonumber
    [\psi(s),\psi(t)]
    & =
    [X(s)^{\rT}\Pi X(s), X(t)^{\rT}\Pi X(t)]\\
\nonumber
    & =
    4i
    \begin{bmatrix}
        X(s) \\
        X(t)
    \end{bmatrix}^{\rT}
    \begin{bmatrix}
      0 & \gamma(s-t)\\
      -\gamma(t-s) & 0
    \end{bmatrix}
    \begin{bmatrix}
        X(s) \\
        X(t)
    \end{bmatrix}\\
\nonumber
    &=
    4i
    \big(
        X(s)^{\rT} \gamma(s-t) X(t)
        -
        X(t)^{\rT} \gamma(t-s) X(s)
    \big)\\
\label{psispsit}
    & =
    -8i
    \Re
    \big(
        X(t)^{\rT} \gamma(t-s) X(s)
    \big)
\end{align}
for any $s,t\>0$.
Here,
\begin{equation}
\label{gamma}
    \gamma(\tau)
    :=
    \Pi \Lambda(\tau)\Pi
    =
    -\gamma(-\tau)^{\rT},
\end{equation}
and
use is also made of (\ref{Lambda}) in combination with the symmetry of the matrix $\Pi$ and the identity $(\xi^{\rT} N\eta)^{\dagger} = \eta N^{\rT} \xi$ which holds for vectors $\xi$ and $\eta$ of self-adjoint operators and an appropriately dimensioned real matrix $N$. In view of (\ref{phi_psi}), it follows from (\ref{psispsit}) 
that
\begin{align}
\nonumber
    [\varphi(t), \psi(t)]
    & =
    \int_0^t
    [\psi(s), \psi(t)]
    \rd s\\
\nonumber
       & =
       -
       8i
       \Re
       \Big(
            X(t)^{\rT}
            \int_0^t
            \gamma(t-s)
            X(s)
            \rd s
        \Big)\\
\label{phitpsit}
        & =
        i \mho_t(\wh{\alpha}_t,0),
\end{align}
where the last equality is obtained by comparison with (\ref{mho}), with the function $\wh{\alpha}_t$ given by (\ref{alphahat}) in view of (\ref{gamma}). Substitution of (\ref{phitpsit}) into (\ref{Ups}) leads to
\begin{align}
\nonumber
    \Ups_{\theta}(t)
    & =
    \frac{\theta}{2}
    g
    \Big(
        \frac{\theta}{2}\ad_{\varphi(t)}
    \Big)(i \mho_t(\wh{\alpha}_t,0))\\
\nonumber
    & =
    i\frac{\theta}{2}
    \mho_t
    \Big(
    g
    \Big(
        i\frac{\theta}{2}\Gamma_t
    \Big)
    (\wh{\alpha}_t,0)
    \Big)\\
\label{Ups1}
    & =
    \frac{\theta}{2}
    \mho_t
    \Big(
    f
    \Big(
        \frac{\theta}{2}\Gamma_t
    \Big)
    (\wh{\alpha}_t,0)
    \Big).
\end{align}
Here, use is made of Lemma~\ref{lem:intop} (and its corollary (\ref{fadmho})), with the function $f$ in (\ref{f}) being related to $g$ in (\ref{g}) by
$
    f(z):= ig(iz)
$. In view of the notation (\ref{kerfun}), the representation (\ref{Ups1}) establishes (\ref{Ups0}), thus completing the proof.
\end{proof}

In the classical case (obtained in the limit as $\Theta\to 0$ and $J\to 0$), the operators $\varphi(t)$ and $\psi(t)$ in (\ref{phi_psi}) are real-valued (and hence, commuting) functions of time, and the process $\Ups_{\theta}$ vanishes. In this case,  (\ref{dotQEF}) reduces to
$$
    \d_t \ln \Xi_{\theta}(t)
    =
    \theta
    \bE_{\theta,t}
        \psi(t)
    =
    \frac{\theta}{\Xi_{\theta}(t)}
    \bE
    \Big(
        \re^{\theta \varphi(t)}
        \psi(t)
    \Big)
$$
and corresponds to the derivative of the usual exponential function. Returning to the quantum setting, we note that
(\ref{dotephi1}) resembles the operator differential equation \cite[Eq. (19)]{J_2005}. However, unlike the latter, (\ref{dotephi1}) produces a self-adjoint solution, and its right-hand side involves an additional process $\Ups_{\theta}$ which depends on the past history of the system variables. 
More precisely, the operator $\Ups_{\theta}(t)$ stores part of the quantum information on the past history of the system variables that is relevant to the time derivative $(\re^{\theta \varphi(t)})^{^\centerdot}$.

\section{A quartic approximation of the quadratic-exponential functional}\label{sec:quart}

As mentioned in the previous section in regard to the Taylor series expansion (\ref{expquad1}), the QEF can be approximated by the quantity
\begin{equation}
\label{F}
    F_{\theta}(t)
    :=
    \theta
    \Big(
        \bE
        \varphi(t)
        +
        \frac{\theta}{2}
        \var(\varphi(t))
    \Big)
\end{equation}
which takes into account only the first four moments of the system variables over the time interval $[0,t]$. In view of (\ref{rex}) and (\ref{Jen}), the term $\bE \varphi(t)$ in (\ref{F}) is organised as a mean square cost functional in the coherent quantum LQG control problems \cite{NJP_2009}. This quadratic term is revisited for completeness in the following theorem whose main result is the computation of the second term in (\ref{F}).

\begin{theorem}
\label{th:asy}
Suppose the matrix $A$ in (\ref{A_B}) is Hurwitz, and the OQHO (\ref{dX}), driven by vacuum fields, is initialised at the invariant Gaussian state.  Then the mean value and variance of the quantum process $\varphi$ in (\ref{phi_psi}) can be computed as
\begin{align}
\label{Ephi}
    \bE \varphi(t)
    & =
    \bra \Pi, P\ket t,\\
\label{Varphi}
    \var (\varphi(t))
    & =
    4
    \int_0^t
    (t-\tau)
    \Bra
        \Pi,
        \re^{\tau A}
        (P\Pi P + \Theta \Pi \Theta)
        \re^{\tau A^{\rT}}
    \Ket
    \rd \tau
\end{align}
for any $t\> 0$,
where the matrix $P$ is given by (\ref{P}). The infinite-horizon asymptotic behavior of this variance is described by
\begin{equation}
\label{Varphiasy}
    \lim_{t\to +\infty}
    \Big(
    \frac{1}{t}
    \var(\varphi(t))
    \Big)
    =
    4
    \bra
        \Pi,
        T
    \ket
    =
    4
    \bra
        Q,
        P\Pi P + \Theta \Pi \Theta
    \ket,
\end{equation}
where the matrices $T$ and $Q$
are the unique solutions of the ALEs
\begin{align}
\label{TALE}
    A T + TA^{\rT} + P\Pi P + \Theta \Pi \Theta & = 0,\\
\label{QALE}
    A^{\rT}Q + QA + \Pi & = 0.
\end{align}
 \hfill$\square$
\end{theorem}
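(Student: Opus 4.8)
The plan is to exploit the stationary Gaussian character of the system variables under the invariant state, together with the quantum Wick (Isserlis) factorisation of fourth-order moments of Gaussian quantum variables provided by the covariance lemma of Appendix~\ref{sec:covquad}. For the mean value (\ref{Ephi}) I would integrate $\bE\psi(s) = \bra\Pi, \bE(X(s)X(s)^{\rT})\ket$ over $[0,t]$. Under the invariant state the second moment is constant, $\bE(X(s)X(s)^{\rT}) = P+i\Theta$, so that $\bE\psi(s) = \Tr(\Pi P) + i\Tr(\Pi\Theta) = \bra\Pi,P\ket$, the imaginary part vanishing because $\Pi$ is symmetric and $\Theta$ is antisymmetric; integrating then gives $\bE\varphi(t) = \bra\Pi,P\ket t$.

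The substantive part is the variance. Since $\varphi(t)^2 = \int_{[0,t]^2}\psi(s)\psi(\sigma)\,\rd s\,\rd\sigma$, I would write $\var(\varphi(t)) = \int_{[0,t]^2}\cov(\psi(s),\psi(\sigma))\,\rd s\,\rd\sigma$, and the key step is to evaluate the covariance of these two quadratic forms. Applying the Wick factorisation to $\bE(X_j(s)X_k(s)X_l(\sigma)X_m(\sigma))$ and cancelling the disconnected term against $\bE\psi(s)\,\bE\psi(\sigma)$, the two surviving cross-pairings each contribute the two-point function $S(s-\sigma) := \bE(X(s)X(\sigma)^{\rT}) = \re^{(s-\sigma)A}(P+i\Theta)$ for $s\geq\sigma$ (from (\ref{Xcovts}) and (\ref{SVLambda})), which after contraction with $\Pi$ yields $\cov(\psi(s),\psi(\sigma)) = 2\Tr\!\big(\Pi S(s-\sigma)\Pi S(s-\sigma)^{\rT}\big)$. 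I expect the bookkeeping of operator ordering and of the complex commutator part of $S$ to be the chief obstacle here; this is precisely what Appendix~\ref{sec:covquad} is designed to handle.

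It then remains to pass to the real answer. Because $\psi(s),\psi(\sigma)$ are self-adjoint, $\cov(\psi(\sigma),\psi(s)) = \overline{\cov(\psi(s),\psi(\sigma))}$, so symmetrising the integrand over $[0,t]^2$ replaces it by its real part. Substituting $S = \re^{\tau A}(P+i\Theta)$ with $\tau = s-\sigma$ and expanding $(P+i\Theta)\Pi(P-i\Theta) = (P\Pi P + \Theta\Pi\Theta) + i(\Theta\Pi P - P\Pi\Theta)$, I observe that, once traced against the real matrix $\Pi$ conjugated by $\re^{\tau A}$, the cross term is purely imaginary and drops out, leaving $\Re\cov(\psi(s),\psi(\sigma)) = 2\bra\Pi, \re^{|s-\sigma|A}(P\Pi P+\Theta\Pi\Theta)\re^{|s-\sigma|A^{\rT}}\ket$. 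The elementary identity $\int_{[0,t]^2} h(|s-\sigma|)\,\rd s\,\rd\sigma = 2\int_0^t (t-\tau)h(\tau)\,\rd\tau$ then produces (\ref{Varphi}).

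Finally, for the asymptotics (\ref{Varphiasy}) I would split $\tfrac1t\int_0^t(t-\tau)g(\tau)\,\rd\tau = \int_0^t g(\tau)\,\rd\tau - \tfrac1t\int_0^t \tau g(\tau)\,\rd\tau$ with $g(\tau) := \bra\Pi,\re^{\tau A}(P\Pi P+\Theta\Pi\Theta)\re^{\tau A^{\rT}}\ket$. Since $A$ is Hurwitz, $g$ decays exponentially, so the second term vanishes as $t\to+\infty$ while the first converges to $\bra\Pi, T\ket$, where $T = \int_0^{+\infty}\re^{\tau A}(P\Pi P+\Theta\Pi\Theta)\re^{\tau A^{\rT}}\,\rd\tau$ is the solution of the ALE (\ref{TALE}). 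The alternative expression $\bra Q, P\Pi P+\Theta\Pi\Theta\ket$ follows from the standard Lyapunov duality: substituting $\Pi = -(A^{\rT}Q+QA)$ from (\ref{QALE}) and $P\Pi P+\Theta\Pi\Theta = -(AT+TA^{\rT})$ from (\ref{TALE}) into the trace and permuting cyclically gives $\Tr(\Pi T) = \Tr\!\big(Q(P\Pi P+\Theta\Pi\Theta)\big)$, completing the identification.
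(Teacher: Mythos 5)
Your proposal is correct and follows essentially the same route as the paper: integrate the stationary value $\bE\psi=\bra\Pi,P\ket$ for the mean, use the Wick--Isserlis covariance formula of Appendix~B to get $\cov(\psi(\sigma),\psi(\tau))=2\bra\Pi,S(\sigma-\tau)\Pi S(\sigma-\tau)^{\rT}\ket$, symmetrise to extract $\Re(S\Pi S^{\rT})=\re^{\tau A}(P\Pi P+\Theta\Pi\Theta)\re^{\tau A^{\rT}}$, and pass to the limit with the Lyapunov/duality identification of $T$ and $Q$. The only differences are cosmetic (you justify the $t\to+\infty$ limit by splitting off the $\tfrac1t\int_0^t\tau g(\tau)\,\rd\tau$ term rather than by dominated convergence with a cutoff function, and derive the $Q$-form by substituting the ALEs rather than by transposing the exponentials inside the trace), and both are valid.
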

\begin{proof}
Since the invariant Gaussian quantum state has zero mean and the covariance matrix $P+i\Theta$, the relation (\ref{Ephi}) follows from (\ref{phi_psi}) and  the equality
\begin{equation}
\label{Epsi}
    \bE\psi(t)
    =
    \Bra
        \Pi, \bE(X(t)X(t)^{\rT})
    \Ket
    =
    \bra \Pi, P\ket
\end{equation}
for any $t\>0$. Also, (\ref{phi_psi}) allows the variance of $\varphi(t)$ to be expressed in terms of the quantum covariance function of $\psi$ as
\begin{equation}
\label{Varphi1}
    \var(\varphi(t))
    =
    \int_{[0,t]^2}
    \cov(\psi(\sigma),\psi(\tau))
    \rd \sigma
    \rd \tau.
\end{equation}
Application of Lemma~\ref{lem:covquad} of Appendix~\ref{sec:covquad} to 
the two-point Gaussian quantum state for the system variables at the moments of time $\sigma$ and $\tau$ leads to
\begin{align}
\nonumber
  \cov(\psi(\sigma), \psi(\tau))
  & =
  \cov(
    X(\sigma)^{\rT} \Pi X(\sigma),
    X(\tau)^{\rT} \Pi X(\tau)
  )\\
\label{covpsi}
    & =
    2
        \Bra
            \Pi,
            S(\sigma-\tau)\Pi S(\sigma-\tau)^{\rT}
        \Ket,
\end{align}
where use is also made of the symmetry of the matrix $\Pi$.
Here, in accordance with (\ref{V})--(\ref{Xcovs}), the corresponding quantum covariance matrix of the system variables
\begin{align}
\nonumber
    \bE(X(\sigma)X(\tau)^{\rT})
    & =
    \left\{
        \begin{matrix}
            \re^{(\sigma-\tau)A}(P+i\Theta)
            & {\rm if} & \sigma \> \tau\> 0\\
            (P+i\Theta)
            \re^{(\tau-\sigma)A^{\rT}}
            & {\rm if} & \tau\> \sigma \> 0
        \end{matrix}
    \right.\\
\label{S}
    & =
    V(\sigma-\tau) + i\Lambda(\sigma-\tau)
    =
    S(\sigma-\tau)
\end{align}
depends on the difference $\sigma-\tau$ since the system is assumed to be initialised at the invariant state. 
A combination of (\ref{covpsi}) with an appropriate transformation of the integration variables in (\ref{Varphi1})  leads to
\begin{align}
\nonumber
    \var(\varphi(t))
    & =
    2
    \int_{[0,t]^2}
        \Bra
            \Pi,
            S(\sigma-\tau)\Pi S(\sigma-\tau)^{\rT}
        \Ket
    \rd \sigma
    \rd \tau\\
\nonumber
    &=
    2
    \int_{-t}^t
    (t-|\tau|)
    \Bra
        \Pi,
        S(\tau)\Pi S(\tau)^{\rT}
    \Ket
    \rd \tau\\
\nonumber
    &=
    2
    \int_0^t
    (t-\tau)
    \bra
        \Pi,
        S(\tau)\Pi S(\tau)^{\rT}
        +
        S(-\tau)\Pi S(-\tau)^{\rT}
    \ket
    \rd \tau\\
\label{Varphi2}
    &=
    4
    \int_0^t
    (t-\tau)
    \Bra
        \Pi,
        \Re
        \big(
            S(\tau)\Pi S(\tau)^{\rT}
        \big)
    \Ket
    \rd \tau.
\end{align}
Here, the symmetry of the matrix $\Pi$ has been  used together with the property $S(-\tau) = S(\tau)^*$ (see also (\ref{V})--(\ref{Lambda})), whereby
\begin{align*}
    \Bra
        \Pi,\,
        S(-\tau)\Pi S(-\tau)^{\rT}
    \Ket
    & =
    \Bra
        \Pi,\,
        S(\tau)^*\Pi \overline{S(\tau)}
    \Ket
    =
    \Bra
        S(\tau)\Pi,\,
        \Pi \overline{S(\tau)}
    \Ket\\
    & =
    \overline{
    \Bra
        \Pi \overline{S(\tau)},\,
        S(\tau)\Pi
    \Ket}
    =
    \overline{
    \Bra
        \Pi ,\,
        S(\tau)\Pi S(\tau)^{\rT}
    \Ket}
\end{align*}
for any $\tau\in \mR$.
By substituting (\ref{S}) into (\ref{Varphi2}) and using the relation $(P+i\Theta)^{\rT} = P-i\Theta$ in view of the symmetry  of $P$ and antisymmetry of $\Theta$, it follows that
\begin{align*}
\nonumber
    \var(\varphi(t))
    & =
    4
    \int_0^t
    (t-\tau)
    \bra
        \Pi,
        \re^{\tau A}
        \Re
        \big(
            (P+i\Theta)
            \Pi
            (P-i\Theta)
        \big)
        \re^{\tau A^{\rT}}
    \ket
    \rd \tau\\
    & =
    4
    \int_0^t
    (t-\tau)
    \Bra
        \Pi,
        \re^{\tau A}
        (P\Pi P+\Theta\Pi \Theta)
        \re^{\tau A^{\rT}}
    \Ket
    \rd \tau,
\end{align*}
which establishes (\ref{Varphi}). It now remains to prove (\ref{Varphiasy}). To this end, the representation (\ref{Varphi2}) implies that, for any $t>0$,
\begin{equation}
\label{Varphi4}
    \frac{1}{t}
    \var(\varphi(t))
    =
    4
    \int_0^{+\infty}
    \chi_t(\tau)
    \Bra
        \Pi,
        \Re
        \big(
            S(\tau)\Pi S(\tau)^{\rT}
        \big)
    \Ket
    \rd \tau,
\end{equation}
where
\begin{equation}
\label{chi}
    \chi_t(\tau)
    :=
    \max
    \Big(
        0,
        1-\frac{\tau}{t}
    \Big)
\end{equation}
is a finite support function which is bounded by and converges to $1$ as $t\to +\infty$ for any given $\tau\> 0$.  Since (due to $A$ being Hurwitz) the covariance function $S(\tau)$ in (\ref{S}) is square integrable over $\tau$, application of Lebesgue's dominated convergence theorem to (\ref{Varphi4}) leads to
\begin{equation}
\label{PiT}
    \lim_{t\to +\infty}
    \Big(
        \frac{1}{t}
        \var(\varphi(t))
    \Big)
    =
    4
    \int_0^{+\infty}
    \Bra
        \Pi,
        \Re
        \big(
            S(\tau)\Pi S(\tau)^{\rT}
        \big)
    \Ket
    \rd \tau
    =
    4
    \Bra
        \Pi,
        T
    \Ket,
\end{equation}
thus proving the first equality in (\ref{Varphiasy}),
where the matrix
\begin{equation}
\label{T}
    T
    :=
    \int_0^{+\infty}
    \Re
    \big(
        S(\tau)\Pi S(\tau)^{\rT}
    \big)
    \rd \tau
    =
        \int_0^{+\infty}
        \re^{\tau A}
        (P\Pi P + \Theta \Pi \Theta)
        \re^{\tau A^{\rT}}
        \rd \tau
\end{equation}
satisfies the ALE (\ref{TALE}). On the other hand, application of a duality argument allows the integral in (\ref{PiT}) to be evaluated as
\begin{align*}
    \int_0^{+\infty}
    \Bra
        \Pi,
        \Re
        \big(
            S(\tau)\Pi S(\tau)^{\rT}
        \big)
    \Ket
    \rd \tau
    & =
    \int_0^{+\infty}
    \Bra
        \Pi,
        \re^{\tau A}
        (P\Pi P+\Theta\Pi \Theta)
        \re^{\tau A^{\rT}}
    \Ket
    \rd \tau\\
    & =
    \int_0^{+\infty}
    \Bra
        \re^{\tau A^{\rT}}\Pi\re^{\tau A},
        P\Pi P+\Theta\Pi \Theta
    \Ket
    \rd \tau\\
    & =
    \Bra
        Q,
        P\Pi P+\Theta\Pi \Theta
    \Ket,
\end{align*}
which establishes the second of the equalities in (\ref{Varphiasy}), with the matrix
$  Q:=
    \int_0^{+\infty}
        \re^{\tau A^{\rT}}
        \Pi
        \re^{\tau A}
    \rd \tau
$
satisfying the ALE (\ref{QALE}).
\end{proof}

In the limiting case $\Theta= 0$, when the system variables commute with each other, the relation (\ref{covpsi}) reduces to the representation
\begin{equation}
\label{covquad}
    \cov(
        \xi^{\rT} \Pi\xi,
        \eta^{\rT} \Pi\eta
    )
    =
    2
    \Bra
        \Pi,
        \cov(\xi,\eta)
        \Pi
        \cov(\eta,\xi)
    \Ket
\end{equation}
for the covariance of quadratic forms of jointly Gaussian classical random vectors $\xi$ and $\eta$ with zero mean values (see, for example, \cite[Lemmas~2.3 and 6.2]{M_1978} and \cite[Lemma~6]{VP_2010b}). The noncommutative quantum nature of the setting under consideration with $\Theta \ne 0$ manifests itself in (\ref{Varphi})--(\ref{TALE})  through the term $\Theta \Pi\Theta$.

Also, in application to the quartic approximation (\ref{F}) of the QEF, Theorem~\ref{th:asy} yields the following infinite-horizon asymptotic behaviour:
\begin{equation}
\label{Fasy}
    \frac{1}{\theta}
    \lim_{t\to +\infty}
    \Big(
        \frac{1}{t}
        F_{\theta}(t)
    \Big)
    =
    \bra
        \Pi,
        P
        +
        2\theta T
    \ket.
\end{equation}
Here, $T$ is the matrix from (\ref{T}) which takes into account the two-point quantum  statistical correlations (\ref{S}) between the system variables. The relation (\ref{Fasy})  shows that, in the infinite-horizon limit, the quartic term $    \frac{\theta}{2}
        \var(\varphi(t))
$ in (\ref{F}) can be neglected in comparison with the quadratic term $
        \bE
        \varphi(t)
$ only if the parameter $\theta$ is small enough in the sense that
\begin{equation}
\label{theta0}
    \theta
    \ll
    \theta_0
    :=
    \frac{1}{2}
    \frac
    {\bra\Pi,P\ket}
    {\bra\Pi,T\ket}.
\end{equation}

\section{Asymptotic growth rates of the higher-order cumulants}\label{sec:cumul}

Similarly to the quadratic and quartic terms of the Taylor series expansion of the QEF in (\ref{expquad1}), the higher-order cumulants $\bK_k(\varphi(t))$ also grow asymptotically linearly with time $t$. For the formulation of the following theorems, we will need an auxiliary transformation
\begin{equation}
\label{funtrans}
    N^{[k]}(\tau)
    :=
    \left\{
    \begin{matrix}
        N(\tau)& {\rm if}\ k= 0\\
        N(-\tau)^{\rT}& {\rm if}\ k= 1
    \end{matrix}
    \right.
\end{equation}
of a matrix-valued function $N$ (of a  real variable), which is the identity transformation in the case $k=0$ (that is, $N^{[0]} = N$). Also, we denote by
\begin{equation}
\label{prod}
    \rprod_{k=1}^r N_k := N_1\x \ldots \x N_r
\end{equation}
the rightward-ordered  product of operators  or appropriately dimensioned matrices (with the order being essential because of noncommutativity). Furthermore, let
\begin{equation}
\label{fI}
    \fI(a,b)
    :=
    \left\{
    \begin{matrix}
        0 & {\rm if}\ a \< b\\
        1 & {\rm if}\ a > b
    \end{matrix}
    \right.
\end{equation}
indicate an inversion in the pair of integers $(a,b)$. The inversion indicator $\fI$ extends to consecutive pairs in an $r$-tuple of integers as
\begin{equation}
\label{fI}
    \fI(c_1, c_2, \ldots, c_r)
    :=
    (
        \fI(c_1,c_2),
        \fI(c_2,c_3),
        \ldots,
        \fI(c_{r-1},c_r)
    ).
\end{equation}
Also, for any binary $(r-2)$-index $\gamma \in \{0,1\}^{r-2}$, we denote by $\Delta_{r,\gamma}$ the number of permutations $c:=(c_1, \ldots, c_{r-1})$ of the set $\{1, \ldots, r-1\}$ with the inversion indicator $\gamma$:
\begin{equation}
\label{Delgam}
    \Delta_{r,\gamma}
    :=
    \#\{c \ {\rm is\ a\ permutation\ of}\ \{1,\ldots, r-1\}\ {\rm such\ that}\ \fI(c) = \gamma\}.
\end{equation}
Since such permutations, considered for all possible $\gamma \in \{0,1\}^{r-1}$, form a partitioning of the set of $(r-1)!$ permutations of $\{1, \ldots, r-1\}$, then
\begin{equation}
\label{sum}
    \sum_{\gamma \in \{0,1\}^{r-2}}
    \Delta_{r,\gamma} = (r-1)!.
\end{equation}

\begin{theorem}
\label{th:cumul}
Suppose the matrix $A$ in (\ref{A_B}) is Hurwitz, and the OQHO (\ref{dX}), driven by vacuum fields, is initialised at the invariant Gaussian state. Then for any $r\>2$ and any time $t\>0$, the $r$th cumulant (\ref{bK}) of the process $\varphi$ in (\ref{phi_psi}), with $\Pi\succcurlyeq 0$,  can be represented as
\begin{equation}
\label{bKphi}
    \bK_r(\varphi(t))
    =
    2^{r-1}
    \sum_{\gamma\in \{0,1\}^{r-2}}
    \Delta_{r,\gamma}
    \int_{[0,t]^r}
    \Tr
    \Big(
    \Pi S(t_1-t_2)
    \rprod_{j=2}^{r-1}
    \big(
        \Pi S^{[\gamma_j]}(t_j-t_{j+1})
    \big)
    \Pi S(t_1-t_r)^{\rT}
    \Big)
    \rd t_1\x \ldots \x \rd t_r,
\end{equation}
where $S$ is the steady-state quantum covariance function of the system variables in (\ref{S}), and use is made of the notation (\ref{funtrans}) and (\ref{prod}).
The sum in (\ref{bKphi}) is over binary $(r-2)$-indices $\gamma:= (\gamma_2, \ldots, \gamma_{r-1}) \in \{0,1\}^{r-2}$, and the corresponding coefficients $\Delta_{r,\gamma}$ are described by (\ref{fI})--(\ref{sum}).
\hfill$\square$
\end{theorem}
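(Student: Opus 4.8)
The plan is to express the $r$th cumulant as an integral of the \emph{connected} $r$-point correlator and then evaluate the latter by a quantum Wick expansion. Starting from the cumulant generating function (\ref{expquad1}) and the moment expansion $\bE\re^{\theta\varphi(t)} = \sum_{r\>0}\frac{\theta^r}{r!}\int_{[0,t]^r}\bE(\psi(t_1)\cdots\psi(t_r))\,\rd t_1\cdots\rd t_r$, which follows from $\varphi(t)^r = \int_{[0,t]^r}\psi(t_1)\cdots\psi(t_r)\,\rd t_1\cdots\rd t_r$ (the operator factors being kept in their natural left-to-right order), I would invoke the linked-cluster (exponential) formula: on taking the logarithm only the connected Wick diagrams survive, so that
\begin{equation*}
    \bK_r(\varphi(t)) = \int_{[0,t]^r} \big\langle \psi(t_1)\cdots\psi(t_r)\big\rangle_{\mathrm{c}}\,\rd t_1\cdots\rd t_r,
\end{equation*}
where $\langle\cdot\rangle_{\mathrm{c}}$ denotes the connected part of the ordered correlator. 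Interchanging the series with the integral is justified for small $\theta$ by the Hurwitz property of $A$, which makes $S$ in (\ref{S}) exponentially decaying and hence integrable.

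Next I would apply the quantum (Isserlis) Wick theorem for the invariant Gaussian state, whose two-point function is $\bE(X(\sigma)X(\tau)^{\rT}) = S(\sigma-\tau)$ by (\ref{S}). Writing each vertex $\psi(t_j) = X(t_j)^{\rT}\Pi X(t_j)$ as a quadratic form with two $X$-legs, the correlator becomes a sum over perfect matchings of the $2r$ legs, each pairing contributing a product of two-point functions $S$ contracted through the matrices $\Pi$. Since every $\psi$-vertex carries exactly two legs, a matching is connected \emph{iff} it joins the $r$ vertices into a single cycle; all other matchings factor and are discarded by the logarithm. Each connected matching therefore yields a cyclic contraction of the form $\Tr(\Pi S(\cdots)\Pi S(\cdots)\cdots)$ running once around the $r$ time points.

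It then remains to resum these cyclic contractions into the closed form (\ref{bKphi}), which requires accounting for two combinatorial effects. First, at each vertex the two legs may be contracted with the two neighbouring edges in either order, and since $\Pi=\Pi^{\rT}$ these give equal contributions; this, together with anchoring the cycle at the vertex $t_1$ to remove the cyclic redundancy, produces the overall prefactor $2^{r-1}$ and fixes the trace adjacency to $t_1\!-\!t_2\!-\!\cdots\!-\!t_r\!-\!t_1$. Second, and crucially, since $X(\sigma)$ and $X(\tau)$ do \emph{not} commute, the two-point function assigned to an edge depends on the operator order of its endpoints: a step of the cycle running forward in the product order carries $S$, whereas a step running backward carries $S^{\rT} = S^{[1]}$ in the notation (\ref{funtrans}). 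Relabelling the dummy integration variables so that every cycle is brought to the anchored adjacency above, the pattern of backward steps becomes exactly the inversion indicator $\fI$ of the underlying permutation $c$ of $\{1,\ldots,r-1\}$; grouping the $(r-1)!$ permutations by their inversion pattern $\gamma$ gives the weights $\Delta_{r,\gamma}$ of (\ref{Delgam}) and places the transposes $S^{[\gamma_j]}$ on the middle edges, the boundary edges contributing $\Pi S(t_1-t_2)$ and $\Pi S(t_1-t_r)^{\rT}$. This yields (\ref{bKphi}), and the identity $\sum_\gamma\Delta_{r,\gamma} = (r-1)!$ from (\ref{sum}) recovers the classical cyclic-trace formula of \cite{VP_2010b} in the commutative limit $\Theta\to0$, where $S\to V$ and the transposes become immaterial.

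The main obstacle is the bookkeeping in this last step: establishing the precise dictionary between directed Hamiltonian cycles, with their operator-order inversions, and the anchored trace together with the transpose pattern $\gamma$ of multiplicity $\Delta_{r,\gamma}$. One must verify that after relabelling the integration variables the transpose structure carried by the noncommutative two-point function coincides with $\fI(c)$, and that no spurious double counting of cycles (from direction reversal and leg exchange) is introduced beyond the factor $2^{r-1}$. By contrast, the analytic points — interchange of summation and integration, and integrability of the cyclic products — are comparatively routine once $A$ is Hurwitz.
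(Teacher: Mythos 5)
Your proposal follows essentially the same route as the paper: a Wick--Isserlis expansion of the ordered moments over perfect matchings of the $2r$ legs, reduction to single-cycle (connected) contributions, the $2^{r-1}$ multiplicity from leg assignments with the cycle anchored at $t_1$, and the grouping of the $(r-1)!$ cycles by their inversion pattern to produce the $\Delta_{r,\gamma}$ and the transposes $S^{[\gamma_j]}$. Your appeal to the linked-cluster theorem is precisely the step the paper handles via the moment--cumulant polynomial (\ref{bKP}) and the ``tedious combinatorial considerations'' it omits, so the two arguments coincide in substance and in level of detail.
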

\begin{proof}
In what follows, for any given $r=1,2,3,\ldots$, a permutation
\begin{equation}
\label{kappa}
    \kappa
    :=
    (\kappa_1, \ldots, \kappa_{2r})
    =
    (j_1, k_1, \ldots, j_r, k_r)
\end{equation}
of the integers $1, \ldots, 2r$ is called \emph{regular} if it satisfies
\begin{equation}
    \label{jk}
        j_1 < j_2 <\ldots < j_{r-1} < j_r,
        \qquad
        j_1 < k_1,\
        j_2<k_2,\
         \ldots,\
         j_r< k_r.
\end{equation}
The set of regular permutations consists of $\frac{(2r)!}{r!2^r}=(2r-1)!!$ elements and is denoted by $\cP_r$. There is a one-to-one correspondence between $\cP_r$ and the set of all partitions of $\{1,\ldots,2r\}$ into two-element subsets. Note that $1$ is a fixed point for every regular permutation since $j_1 = 1$.  For example, the set of  regular permutations of $\{1,2,3,4\}$ is given by
\begin{equation*}
\label{cP2}
    \cP_2
    =
    \{(1,3,2,4),\ (1,4,2,3),\ (1,2,3,4)\}.
\end{equation*}
The Wick-Isserlis theorem\footnote{it is also used in Appendix~\ref{sec:covquad}}
  \cite[Theorem 1.28 on pp.~11--12]{J_1997} (see also \cite{I_1918} and \cite[p.~122]{M_2005})
  allows the product moment of self-adjoint quantum variables $\xi_1, \ldots, \xi_{2r}$, which satisfy CCRs and are  in a zero-mean Gaussian state, to be expressed in terms of their covariances as
\begin{equation}
\label{WIT}
    \bE
    \rprod_{k=1}^{2r}
    \xi_k
    =
    \sum_{\kappa \in \cP_r}
    \prod_{s=1}^r
    \bE(\xi_{j_s}\xi_{k_s}).
\end{equation}
By using (\ref{phi_psi}) and (\ref{psiZ}) 
and applying (\ref{WIT}) to the multi-point zero-mean Gaussian state of the entries of the process $Z$ at different moments of time, it   follows that
\begin{align}
\nonumber
    \bE
    (\varphi(t)^r)
    & =
    \bE
    \Big(
    \Big(
    \int_0^t
    \sum_{\ell=1}^n
    Z_\ell(\tau)^2
    \rd \tau
    \Big)^r
    \Big)\\
\nonumber
    & =
    \sum_{\ell_1, \ldots, \ell_r=1}^n
    \int_{[0,t]^r}
    \bE
    \rprod_{k=1}^r
    (Z_{\ell_k}(t_k)^2)
    \rd t_1\x \ldots \x \rd t_r
    \\
\nonumber
    & =
    \sum_{\ell_1, \ldots, \ell_r=1}^n
    \int_{[0,t]^r}
    \sum_{\kappa \in \cP_r}
    \prod_{s=1}^r
    K_{\ell_{d(j_s)} \ell_{d(k_s)}}(t_{d(j_s)}-t_{d(k_s)})
    \rd t_1\x \ldots \x \rd t_r
    \\
\label{EphirWIT}
    & =
    \sum_{\kappa \in \cP_r}
    \int_{[0,t]^r}
    \sum_{\ell_1, \ldots, \ell_r=1}^n
    \prod_{s=1}^r
    K_{\ell_{d(j_s)} \ell_{d(k_s)}}(t_{d(j_s)}-t_{d(k_s)})
    \rd t_1\x \ldots \x \rd t_r.
\end{align}
Here, $K$ denotes the steady-state quantum covariance function  of $Z$ in (\ref{psiZ}) which is related to that of the process  $X$ in (\ref{S}) by
\begin{align}
\nonumber
    \bE(Z(\sigma) Z(\tau)^{\rT})
    & =
    \sqrt{\Pi}S(\sigma-\tau)\sqrt{\Pi}\\
\label{K}
     & =:
     K(\sigma-\tau) = (K_{ab}(\sigma-\tau))_{1\< a, b\< n}.
\end{align}
Also, the function
\begin{equation}
\label{d}
    d(k)
    :=
    \lceil
        k/2
    \rceil
\end{equation}
in (\ref{EphirWIT}) maps the set $\{1,\ldots, 2r\}$ onto $\{1,\ldots, r\}$, where $\lceil \cdot\rceil$ denotes the ceiling function.  For any $j=1,\ldots, r$, the inverse image of the singleton $\{j\}$ under the map $d$  is the two-element set
$d^{-1}(j) = \{2j-1, 2j\}$. For any $\kappa \in \cP_r$ from (\ref{kappa}), the $2r$-tuple
\begin{align}
\nonumber
    \nu
    & :=
    (\nu_1, \ldots, \nu_{2r})
    :=
    d(\kappa)\\
\label{nu}
    & := (d(j_1), d(k_1), \ldots, d(j_r),d(k_r))
\end{align}
describes a permutation of the multi-set $\{1,1,2,2,\ldots, r-1, r-1, r,r\}$. Since the function $d$ in (\ref{d}) is nondecreasing, then $\nu$ inherits from $\kappa$ the property (\ref{jk}) in a nonstrict form:
\begin{equation}
\label{jknu}
        \nu_1 \< \nu_3 \<\ldots \< \nu_{2r-3} \< \nu_{2r-1},
        \qquad
        \nu_1 \< \nu_2,\
        \nu_3 \< \nu_4,\
         \ldots,\
         \nu_{2r-1}\< \nu_{2r}.
\end{equation}
Since the map $d$ is not injective,  some elements $\kappa \in \cP_r$ are mapped to the same permutation $\nu$ in (\ref{nu}), so that the inverse image $d^{-1}(\nu):= \{\kappa\in \cP_r:\ d(\kappa) = \nu\}$ of $\nu\in d(\cP_r)$ consists, in general,  of more than one regular permutation. In fact, only the identity permutation has a unique preimage: $d^{-1}(1,1,2,2,\ldots, r,r) = (1,2,\ldots, 2r-1,2r)$.   For example, the case $r=3$ is illustrated by Tabs.~\ref{tab:1} and \ref{tab:2} which also provide the numbers $\#d^{-1}(\nu)$ of elements in the inverse images.
\begin{table}[htbp]
\centering
\caption{The set $\cP_3$ of regular permutations $\kappa$ of $\{1,\ldots, 6\}$ and their images $\nu:= d(\kappa)$ under the map $d$ in (\ref{d})}
\label{tab:1}       
\begin{tabular}{l||ll|ll|ll||ll|ll|ll}
\hline\noalign{\smallskip}
 No. &
 $\kappa_1$ & $\kappa_2$ & 
$\kappa_3$ & $\kappa_4$ &  
$\kappa_5$ & $\kappa_6$ & 
$\nu_1$ & $\nu_2$ & 
$\nu_3$ & $\nu_4$ & 
$\nu_5$ & $\nu_6$ \\ 
\noalign{\smallskip}\hline\noalign{\smallskip}
  1  & 1  &   2  &   3  &   4  &   5  &   6 &      1  &   1  &   2  &   2  &   3 &    3                                \\
  2  & 1  &   2  &   3  &   5  &   4  &   6 &      1  &   1  &   2  &   3  &   2 &    3                               \\
  3  & 1  &   2  &   3  &   6  &   4  &   5 &      1  &   1  &   2  &   3  &   2 &    3                               \\
  4  & 1  &   3  &   2  &   4  &   5  &   6 &      1  &   2  &   1  &   2  &   3 &    3                               \\
  5  & 1  &   3  &   2  &   5  &   4  &   6 &      1  &   2  &   1  &   3  &   2 &    3                               \\
  6  & 1  &   3  &   2  &   6  &   4  &   5 &      1  &   2  &   1  &   3  &   2 &    3                               \\
  7  & 1  &   4  &   2  &   3  &   5  &   6 &      1  &   2  &   1  &   2  &   3 &    3                               \\
  8  & 1  &   4  &   2  &   5  &   3  &   6 &      1  &   2  &   1  &   3  &   2 &    3                               \\
  9  & 1  &   4  &   2  &   6  &   3  &   5 &      1  &   2  &   1  &   3  &   2 &    3                               \\
  10 & 1  &   5  &   2  &   3  &   4  &   6 &      1  &   3  &   1  &   2  &   2 &    3                               \\
  11 & 1  &   5  &   2  &   4  &   3  &   6 &      1  &   3  &   1  &   2  &   2 &    3                               \\
  12 & 1  &   5  &   2  &   6  &   3  &   4 &      1  &   3  &   1  &   3  &   2 &    2                               \\
  13 & 1  &   6  &   2  &   3  &   4  &   5 &      1  &   3  &   1  &   2  &   2 &    3                               \\
  14 & 1  &   6  &   2  &   4  &   3  &   5 &      1  &   3  &   1  &   2  &   2 &    3                               \\
  15 & 1  &   6  &   2  &   5  &   3  &   4 &      1  &   3  &   1  &   3  &   2 &    2                               \\
\noalign{\smallskip}\hline
\end{tabular}
\end{table}
\begin{table}[htbp]
\centering
\caption{The permutations $\nu\in d(\cP_3)$ of the multi-set $\{1,1,2,2,3,3\}$, their multiplicities $\# d^{-1}(\nu)$ and cycle structures}
\label{tab:2}       
\begin{tabular}{l||ll|ll|ll||l||l}
\hline\noalign{\smallskip}
 No.
&
$\nu_1$ & $\nu_2$ & 
$\nu_3$ & $\nu_4$ & 
$\nu_5$ & $\nu_6$ & 
 $\#d^{-1}(\nu)$ & cycles\\
\noalign{\smallskip}\hline\noalign{\smallskip}
  1  &      1  &   1  &   2  &   2  &   3 &    3 &      1 &
  (1), (2), (3)\\
  2  &      1  &   1  &   2  &   3  &   2 &    3 &      2 &
  (1), (2,3)\\
  3  &      1  &   2  &   1  &   2  &   3 &    3 &      2 &
  (1,2), (3)\\
  4  &      1  &   2  &   1  &   3  &   2 &    3 &      4 &
  (1,2,3)\\
  5  &      1  &   3  &   1  &   2  &   2 &    3 &      4 &
  (1,3,2)\\
  6  &      1  &   3  &   1  &   3  &   2 &    2 &      2 &
  (1,3), (2)\\
\noalign{\smallskip}\hline
\end{tabular}
\end{table}
For any $r\>1$, the relation (\ref{EphirWIT}) can be represented in terms of the permutations $\nu$ in (\ref{nu}) (with their multiplicities taken into account) as
\begin{equation}
\label{Ephirnu}
    \bE
    (\varphi(t)^r)
    =
    \sum_{\nu \in d(\cP_r)}
    \# d^{-1}(\nu)
    \int_{[0,t]^r}
    \sum_{\ell_1, \ldots, \ell_r=1}^n
    \prod_{s=1}^r
    K_{\ell_{\nu_{2s-1}} \ell_{\nu_{2s}}}(t_{\nu_{2s-1}}-t_{\nu_{2s}})
    \rd t_1\x \ldots \x \rd t_r.
\end{equation}
Now, any given permutation $\nu\in d(\cP_r)$ can be partitioned into cycles $c$ which pass through elements of the set $\{1, \ldots, r\}$ twice. Every  such cycle of period $p$ can be represented as a $p$-tuple $(c_1, c_2, \ldots, c_p)$ of pairwise different elements of the set $\{1, \ldots, r\}$ (see, for example, the rightmost column of Tab.~\ref{tab:2}). In terms of these cycles (which depend on a particular permutation $\nu \in d(\cP_r)$), the product in (\ref{Ephirnu}) takes the form
\begin{equation}
\label{Kprod}
    \prod_{s=1}^r
    K_{\ell_{\nu_{2s-1}} \ell_{\nu_{2s}}}(t_{\nu_{2s-1}}-t_{\nu_{2s}})
    =
    \prod_c
    \prod_{j=1}^p
    K_{\ell_{c_j} \ell_{c_{j+1}}}^{[\gamma_j]}(t_{c_j}-t_{c_{j+1}}).
\end{equation}
Here, for a given cycle $c:= (c_1, \ldots, c_p)$ of period $p$, the convention $c_{p+1}:= c_1$ is used, and $\gamma_1, \ldots, \gamma_p$ are auxiliary variables
\begin{equation}
\label{flip}
    \gamma_j
    :=
    \fI(c_j,c_{j+1})
    =
    \left\{
    \begin{matrix}
        0 & {\rm if}\ c_j < c_{j+1}\\
        1 & {\rm if}\ c_j > c_{j+1}
    \end{matrix}
    \right.
\end{equation}
which, in accordance with (\ref{fI}), indicate inversions for consecutive elements of the cycle $c$
(we let $\gamma_1:=0$ in the case $p=1$); see Fig.~\ref{fig:cycles}.
\begin{figure}
\centering
\begin{tikzpicture}
\tikzset{vertex/.style = {shape=circle,draw,minimum size=1mm}}
\tikzset{edge/.style = {->,> = latex'}}
\node at (-1,0) {1)};
\node at (-1,-1.5) {2)};
\node at (-1,-3) {3)};
\node at (-1,-4.5) {4)};
\node at (-1,-6) {5)};
\node at (-1,-7.5) {6)};
\node[vertex] (11) at  (0,0)  {\small$1$};
\node[vertex] (12) at  (1.5,0)  {\small$1$};
\node[vertex] (13) at  (3,0)  {\small$2$};
\node[vertex] (14) at  (4.5,0)  {\small$2$};
\node[vertex] (15) at  (6,0)  {\small$3$};
\node[vertex] (16) at  (7.5,0) {\small$3$};

\draw[edge] (11) to (12);
\draw[edge] (12) to[bend right] (11);
\draw[edge] (13) to (14);
\draw[edge] (14) to[bend right] (13);
\draw[edge] (15) to (16);
\draw[edge] (16) to[bend right] (15);

\node[vertex] (21) at  (0,-1.5)  {\small$1$};
\node[vertex] (22) at  (1.5,-1.5)  {\small$1$};
\node[vertex] (23) at  (3,-1.5)  {\small$2$};
\node[vertex] (24) at  (4.5,-1.5)  {\small$3$};
\node[vertex] (25) at  (6,-1.5)  {\small$2$};
\node[vertex] (26) at  (7.5,-1.5) {\small$3$};
\draw[edge] (21) to (22);
\draw[edge] (22) to[bend right] (21);
\draw[edge] (23) to (24);
\draw[edge] (24) to[bend left] (26);
\draw[edge] (26) to (25);
\draw[edge] (25) to[bend right] (23);

\node[vertex] (31) at  (0,-3)  {\small$1$};
\node[vertex] (32) at  (1.5,-3)  {\small$2$};
\node[vertex] (33) at  (3,-3)  {\small$1$};
\node[vertex] (34) at  (4.5,-3)  {\small$2$};
\node[vertex] (35) at  (6,-3)  {\small$3$};
\node[vertex] (36) at  (7.5,-3) {\small$3$};
\draw[edge] (31) to (32);
\draw[edge] (32) to[bend left] (34);
\draw[edge] (34) to (33);
\draw[edge] (33) to[bend right] (31);
\draw[edge] (35) to (36);
\draw[edge] (36) to[bend right] (35);

\node[vertex] (41) at  (0,-4.5)  {\small$1$};
\node[vertex] (42) at  (1.5,-4.5)  {\small$2$};
\node[vertex] (43) at  (3,-4.5)  {\small$1$};
\node[vertex] (44) at  (4.5,-4.5)  {\small$3$};
\node[vertex] (45) at  (6,-4.5)  {\small$2$};
\node[vertex] (46) at  (7.5,-4.5) {\small$3$};
\draw[edge] (41) to (42);
\draw[edge] (42) to[bend left] (45);
\draw[edge] (45) to (46);
\draw[edge] (46) to[bend right] (44);
\draw[edge] (44) to(43);
\draw[edge] (43) to[bend right] (41);

\node[vertex] (51) at  (0,-6)  {\small$1$};
\node[vertex] (52) at  (1.5,-6)  {\small$3$};
\node[vertex] (53) at  (3,-6)  {\small$1$};
\node[vertex] (54) at  (4.5,-6)  {\small$2$};
\node[vertex] (55) at  (6,-6)  {\small$2$};
\node[vertex] (56) at  (7.5,-6) {\small$3$};
\draw[edge] (51) to (52);
\draw[edge] (52) to[bend left] (56);
\draw[edge] (56) to (55);
\draw[edge] (55) to[bend right] (54);
\draw[edge] (54) to(53);
\draw[edge] (53) to[bend right] (51);

\node[vertex] (61) at  (0,-7.5)  {\small$1$};
\node[vertex] (62) at  (1.5,-7.5)  {\small$3$};
\node[vertex] (63) at  (3,-7.5)  {\small$1$};
\node[vertex] (64) at  (4.5,-7.5)  {\small$3$};
\node[vertex] (65) at  (6,-7.5)  {\small$2$};
\node[vertex] (66) at  (7.5,-7.5) {\small$2$};
\draw[edge] (61) to (62);
\draw[edge] (62) to[bend left] (64);
\draw[edge] (64) to (63);
\draw[edge] (63) to[bend right] (61);
\draw[edge] (65) to (66);
\draw[edge] (66) to[bend right] (65);
\end{tikzpicture}
\caption{These directed graphs represent the permutations $\nu$ of the multi-set $\{1,1,2,2,3,3\}$ in  Tab.~\ref{tab:2} (for the case $r=3$) and their cycle structure. The straight edges in each of the graphs depict the pairs $(c_j,c_{j+1})$ in (\ref{flip}) and are oriented rightwards ($\gamma_j=0$) or leftwards ($\gamma_j=1$), with the latter indicating the inversions in these pairs. Each of the curved edges represents a transition to the remaining occurrence  of the current element of the cycle in the permutation. The fourth and fifth permutations are monocyclic. According to the second last column of Tab.~\ref{tab:2}, their total multiplicity is $8$. }
\label{fig:cycles}
\end{figure}
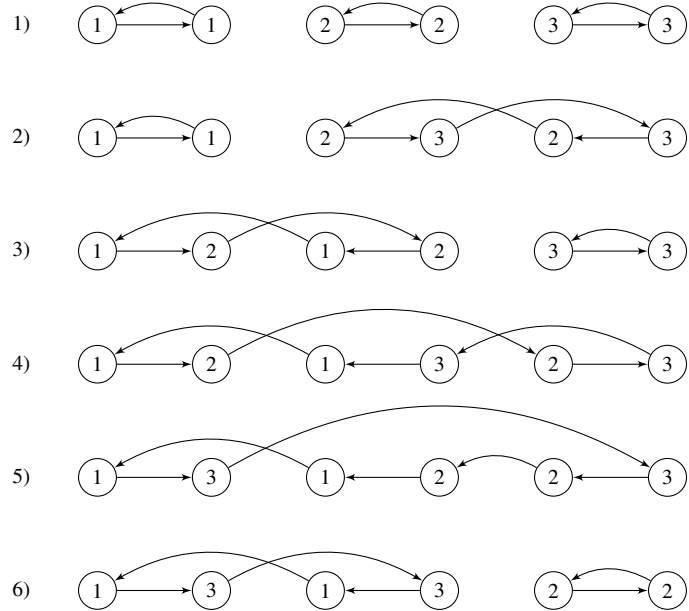
Also, use is made of the notation
\begin{equation}
\label{Kflip}
    K_{\ell_{c_j} \ell_{c_{j+1}}}^{[\gamma_j]}(t_{c_j}-t_{c_{j+1}})
    :=
    \left\{
    \begin{matrix}
        K_{\ell_{c_j} \ell_{c_{j+1}}}(t_{c_j}-t_{c_{j+1}}) & {\rm if}\ c_j < c_{j+1}\\
        K_{\ell_{c_{j+1}} \ell_{c_j}}(t_{c_{j+1}}-t_{c_j}) & {\rm if}\ c_{j+1}< c_j
    \end{matrix}
    \right..
\end{equation}
Since the cycles of a given permutation $\nu$ partition the set $\{1, \ldots, r\}$, then, in view of (\ref{flip}) and (\ref{Kflip}),   the summation of (\ref{Kprod}) over the independent indices $\ell_1, \ldots, \ell_r$ leads to
\begin{align}
\nonumber
    \sum_{\ell_1, \ldots, \ell_r=1}^n
    \prod_{s=1}^r
    K_{\ell_{\nu_{2s-1}} \ell_{\nu_{2s}}}(t_{\nu_{2s-1}}-t_{\nu_{2s}})
    & =
    \sum_{\ell_1, \ldots, \ell_r=1}^n
    \prod_c
    \prod_{j=1}^p
    K_{\ell_{c_j} \ell_{c_{j+1}}}^{[\gamma_j]}(t_{c_j}-t_{c_{j+1}})\\
\label{sumKprod}
    & =
    \prod_c
    \Tr
    \rprod_{j=1}^p
    K^{[\gamma_j]}(t_{c_j}-t_{c_{j+1}}),
\end{align}
where the last equality also employs (\ref{funtrans}) and (\ref{prod}).  By a similar reasoning, the integration of the right-hand side of  (\ref{sumKprod}) over $t_1, \ldots, t_r$ yields
\begin{equation}
\label{intsumKprod}
    \int_{[0,t]^r}
    \prod_c
    \Tr
    \rprod_{j=1}^p
    K^{[\gamma_j]}(t_{c_j}-t_{c_{j+1}})
    \rd t_1 \x \ldots \x \rd t_r
    =
    \prod_c
    \int_{[0,t]^p}
        \Tr
    \rprod_{j=1}^p
    K^{[\gamma_j]}(t_j-t_{j+1})
    \rd t_1\x \ldots \x \rd t_p,
\end{equation}
where $t_{p+1} := t_1$. Substitution of (\ref{intsumKprod}) into (\ref{Ephirnu}) leads to
\begin{equation}
\label{Ephirnu1}
    \bE
    (\varphi(t)^r)
    =
    \sum_{\nu \in d(\cP_r)}
    \# d^{-1}(\nu)
    \prod_c
    \int_{[0,t]^p}
        \Tr
    \rprod_{j=1}^p
    K^{[\gamma_j]}(t_j-t_{j+1})
    \rd t_1\x \ldots \x \rd t_p.
\end{equation}
A combination of (\ref{Ephirnu1}) with (\ref{bKP}) and tedious combinatorial considerations (omitted here for brevity)  show that the contributions to the $r$th cumulant $\bK_r(\varphi(t))$  from all those permutations $\nu$, which  consist of at least two cycles,  cancel out in the sense that
\begin{align*}
    \sum_{\nu \in d(\cP_r)\ {\rm with\ at\ least\ two\ cycles}} &
    \# d^{-1}(\nu)
    \prod_c
    \int_{[0,t]^p}
        \Tr
    \rprod_{j=1}^p
    K^{[\gamma_j]}(t_j-t_{j+1})
    \rd t_1\x \ldots \x \rd t_p\\
    & =
    r!
    \sum_{k=2}^r
    \frac{(-1)^k}{k}
    \sum_{j_1,\ldots, j_k\> 1:\ j_1+\ldots+j_k = r}\
    \prod_{s=1}^k
    \frac{\bE(\varphi(t)^{j_s})}{j_s!}.
\end{align*}
Hence, only \emph{monocyclic} permutations $\nu \in d(\cP_r)$,  consisting of a single cycle of period $r$ (the set of such permutations is denoted by $\cM_r$), contribute to the cumulant:
\begin{equation}
\label{bKr}
    \bK_r(\varphi(t))
    =
    \sum_{\nu \in \cM_r}
    \# d^{-1}(\nu)
    \int_{[0,t]^r}
        \Tr
    \rprod_{j=1}^r
    K^{[\gamma_j]}(t_j-t_{j+1})
    \rd t_1\x \ldots \x \rd t_r.
\end{equation}
The cycle, associated with a monocyclic permutation $\nu \in \cM_r$, has the form $(1, c_2, \ldots, c_r)$, where $(c_2, \ldots, c_r)$ is an arbitrary permutation of the set $\{2, \ldots, r\}$.
For any such $(c_2, \ldots, c_r)$, all monocyclic permutations $\nu\in \cM_r$ with the cycle $(1, c_2, \ldots, c_r)$ are described by
\begin{equation}
\label{nupp}
    \nu = (1,c_2,1,c_r, p_1, \ldots, p_{r-2}),
\end{equation}
where $p_1, \ldots, p_{r-2}$ are pairs of integers which are obtained by ordering the pairs
\begin{equation}
\label{abc}
    (a_j,b_j)
    :=
    (
        \min(c_j,c_{j+1}),
        \max(c_j,c_{j+1})
    )
\end{equation}
for all $j=2,\ldots, r-1$, so that their first entries $a_2, \ldots, a_{r-1}$ are ordered in ascending order  according to (\ref{jknu}). The ordering is delivered by all those permutations $f_1, \ldots, f_{r-2}$ of the set $\{2, \ldots, r-1\}$ which satisfy
\begin{equation}
\label{aforder}
    a_{f_1}\< \ldots \< a_{f_{r-2}}
\end{equation}
and the trailing pairs in (\ref{nupp}) are given in terms of (\ref{abc}) by
\begin{equation}
\label{pab}
    p_j := (a_{f_j}, b_{f_j}),
    \qquad
    j = 1, \ldots, r-2.
\end{equation}
Such a permutation  $f_1, \ldots, f_{r-2}$ is unique only if $a_2, \ldots, a_{r-1}$ are pairwise different. If there are $\mu$ different pairs of identical elements among $a_2, \ldots, a_{r-1}$, then the number of permutations $f_1, \ldots, f_{r-2}$ which secure (\ref{aforder}) is $2^{\mu}$. However, in both scenarios, there are $2^{r-1}$ permutations $\kappa \in \cP_r$ such that the corresponding $\nu$ in (\ref{nu}) is monocyclic with the given cycle $(1, c_2,\ldots, c_r)$. Now, since the matrix $    \rprod_{j=1}^r
    K^{[\gamma_j]}(t_j-t_{j+1})
$ on the right-hand side of (\ref{bKr}) depends on a monocyclic permutation $\nu$ in (\ref{nupp})--(\ref{pab})  only through the inversion indicators $\gamma_1, \ldots, \gamma_r$ in (\ref{flip}), which are completely specified by $c_2, \ldots, c_r$, then
\begin{equation}
\label{bKr1}
    \bK_r(\varphi(t))
    =
    2^{r-1}
    \sum_{(c_2, \ldots, c_r)}
    \int_{[0,t]^r}
    \Tr
    \rprod_{j=1}^r
    K^{[\gamma_j]}(t_j-t_{j+1})
    \rd t_1\x \ldots \x \rd t_r,
\end{equation}
where the sum is over all $(r-1)!$ permutations $(c_2, \ldots, c_r)$ of the set $\{2, \ldots, r\}$. The summation in (\ref{bKr1}) can be reduced to that over the binary variables $\gamma_2, \ldots, \gamma_{r-1}$ as
\begin{equation}
\label{bKr2}
    \bK_r(\varphi(t))
    =
    2^{r-1}
    \sum_{\gamma_2, \ldots, \gamma_{r-1} = 0,1}
    \Delta_{r,\gamma_2, \ldots, \gamma_{r-1}}
    \int_{[0,t]^r}
    \Tr
    \Big(
    K(t_1-t_2)
    \rprod_{j=2}^{r-1}
    K^{[\gamma_j]}(t_j-t_{j+1})
    K(t_1-t_r)^{\rT}
    \Big)
    \rd t_1\x \ldots \x \rd t_r,
\end{equation}
since $\gamma_1 = 0$ and $\gamma_r = 1$ in view of (\ref{flip}) and $c_1=1< c_2$ and $c_r>c_{r+1}=1$.  Here, in accordance with (\ref{Delgam}), the coefficient  $\Delta_{r,\gamma_2, \ldots, \gamma_{r-1}}$ is the number of those permutations $(c_2, \ldots, c_r)$ of the set $\{2, \ldots, r\}$  which have the inversion indicators $\gamma_2, \ldots, \gamma_{r-1}$. Therefore, the sum of these coefficients is given by (\ref{sum}). From (\ref{K}) and the symmetry of the matrix $\Pi$, it follows that
\begin{align}
\nonumber
    \Tr
    \rprod_{j=1}^r
    K^{[\gamma_j]}(t_j-t_{j+1})
    & =
    \Tr
    \rprod_{j=1}^r
    \big(
        \sqrt{\Pi}
        S^{[\gamma_j]}(t_j-t_{j+1})
        \sqrt{\Pi}
    \big)\\
\label{KS}
    & =
    \Tr
    \Big(
    \Pi S(t_1-t_2)
    \rprod_{j=2}^{r-1}
    \big(
        \Pi S^{[\gamma_j]}(t_j-t_{j+1})
    \big)
    \Pi S(t_1-t_r)^{\rT}
    \Big).
\end{align}
Substitution of (\ref{KS}) into (\ref{bKr2}) leads to (\ref{bKphi}), thus completing the proof.
\end{proof}

Although Theorem~\ref{th:cumul} is formulated and proved for the case $\Pi\succcurlyeq 0$, the relation (\ref{bKphi}) remains valid irrespective of the matrix $\Pi$ being positive semi-definite.

In contrast to covariance functions of classical random processes, the quantum covariance function $S$ in (\ref{S}) is not invariant under the transformation (\ref{funtrans}) since $S^{[1]}(\tau)=\overline{S(\tau)} \ne S(\tau)$ in general. Therefore, the integral in (\ref{bKphi}) has a more complicated structure than the $r$-fold convolution of the function $\Pi S$ with itself.
For example, in the case $r=3$, the sum in (\ref{bKphi}) is over a binary index and consists of two terms. In view of (\ref{sum}) and the   invariance of the matrix trace with respect to the transpose and cyclic permutations, this leads to
\begin{align}
\nonumber
    \bK_3(\varphi(t))
     = &
    4\Big(\Delta_{3,0}
    \int_{[0,t]^3}
    \Tr
    \big(
    \Pi
        S(t_1-t_2)
        \Pi
        S(t_2-t_3)
        \Pi
        S(t_1-t_3)^{\rT}
    \big)
    \rd t_1
    \rd t_2
    \rd t_3\\
\nonumber
    & +
    \Delta_{3,1}
    \int_{[0,t]^3}
    \Tr
    \big(
    \Pi
        S(t_1-t_2)
        \Pi
        S(t_3-t_2)^{\rT}
        \Pi
        S(t_1-t_3)^{\rT}
    \big)
    \rd t_1
    \rd t_2
    \rd t_3\Big)\\
\nonumber
    = &
    4 (\Delta_{3,0}+\Delta_{3,1})
    \int_{[0,t]^3}
    \Tr
    \big(
    \Pi
        S(t_1-t_2)
        \Pi
        S(t_2-t_3)
        \Pi
        S(t_1-t_3)^{\rT}
    \big)
    \rd t_1
    \rd t_2
    \rd t_3\\
\label{bK3}
    = &
    8
    \int_{[0,t]^3}
    \Bra
        \Pi,
        S(t_1-t_2)
        \Pi
        S(t_2-t_3)
        \Pi
        S(t_1-t_3)^{\rT}
    \Ket
    \rd t_1
    \rd t_2
    \rd t_3.
\end{align}
For larger values of $r$, the dependence of the coefficient $\Delta_{r,\gamma}$ in (\ref{bKphi}) on the multi-index $\gamma \in \{0,1\}^{r-2}$  is fairly complicated and manifests fractal-like fluctuations, as illustrated by Fig.~\ref{fig:Delgam}.
\begin{figure}[thpb]
      \centering
      \includegraphics[width=150mm]{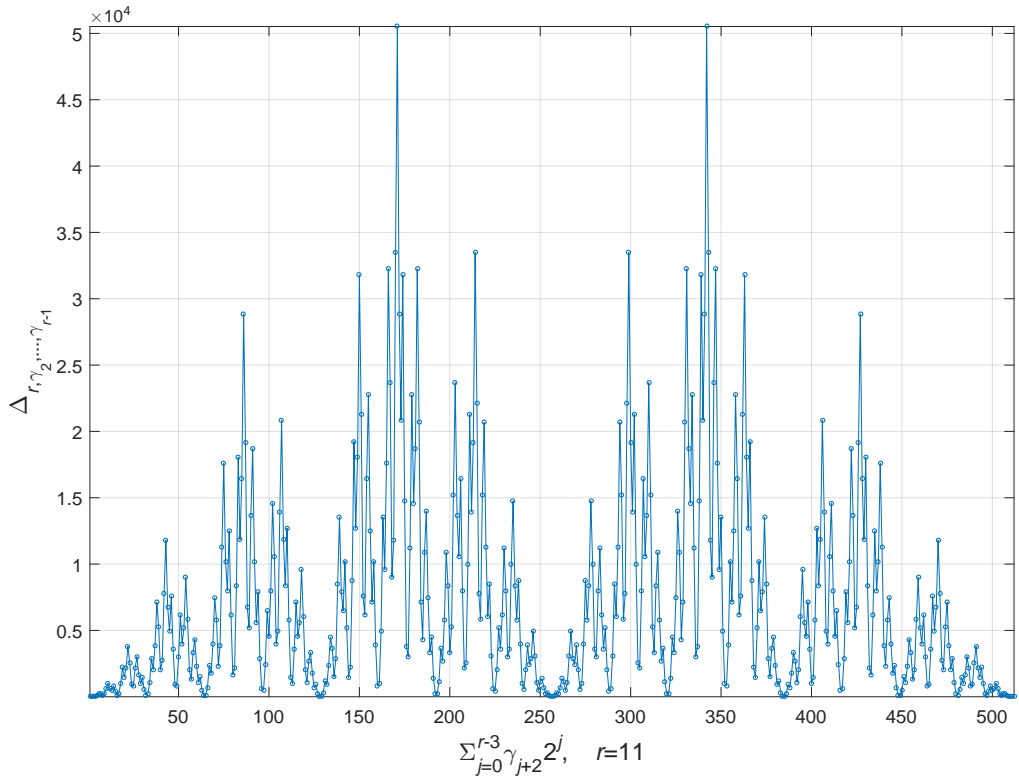}
      \caption{The dependence of the coefficient $\Delta_{r,\gamma}$ in (\ref{bKphi}) on the binary $(r-2)$-index $\gamma$ in the case $r=11$. The multi-index encodes the integers $0,\ldots, 2^{r-2}-1$ over the horizontal axis of the graph. }
      \label{fig:Delgam}
   \end{figure}

The following corollary of Theorem~\ref{th:cumul} describes the infinite-horizon asymptotic behaviour of the cumulants of arbitrary order, including its frequency-domain representation. To this end, we denote by $D$ the spectral density of the quantum covariance function (\ref{S}):
\begin{equation}
\label{D}
    D(\lambda)
    :=
    \int_{-\infty}^{+\infty}
    S(\tau) \re^{-i\lambda\tau}
    \rd \tau.
\end{equation}
Due to the Bochner-Khinchin criterion of positive semi-definiteness of Hermitian kernels \cite{GS_2004} and the standard properties of the Fourier transforms, $D(\lambda) = D(\lambda)^*\succcurlyeq 0$ for all frequencies $\lambda \in \mR$. Furthermore, $D$ admits the factorization
\begin{equation}
\label{DG}
    D(\lambda) = G(i\lambda) \Omega G(i\lambda)^*
\end{equation}
in terms of the quantum Ito matrix $\Omega$ from (\ref{WW}) and the transfer function $G$ from $W$ to $X$ in (\ref{dX}) associated with the matrix pair $(A,B)$ by
\begin{equation}
\label{G}
    G(s):=
    \int_0^{+\infty}
    \re^{-st}
    \re^{-tA}
    B
    \rd t
    =
    (sI_n - A)^{-1}B
\end{equation}
in the closed right half-plane $\Re s\>0$ (since $A$ is Hurwitz).
The transformation (\ref{funtrans}), applied to the covariance function $S$,  carries over to the spectral density $D$ in (\ref{D}) as
\begin{align*}
\nonumber
    S^{[1]}(\tau)
    & =
    S(-\tau)^{\rT}\\
\nonumber
    & =
    \frac{1}{2\pi}
    \int_{-\infty}^{+\infty}
    \re^{-i\lambda\tau}
    D(\lambda)^{\rT}
    \rd \lambda\\
    & =
    \frac{1}{2\pi}
    \int_{-\infty}^{+\infty}
    \re^{i\lambda\tau}
    D(-\lambda)^{\rT}
    \rd \lambda
    =
    \frac{1}{2\pi}
    \int_{-\infty}^{+\infty}
    \re^{i\lambda\tau}
    D^{[1]}(\lambda)
    \rd \lambda,
\end{align*}
with the latter being the inverse Fourier transform of $D^{[1]}$. Also, similarly to (\ref{prod}), for appropriately dimensioned matrix-valued functions $f_1, \ldots, f_r$ on the real line, their convolution is denoted by
\begin{equation}
\label{conv}
    \conv_{k=1}^r
    f_k
    :=
    f_1 * \ldots * f_r.
\end{equation}

\begin{theorem}
\label{th:asycumul}
Under the conditions of Theorem~\ref{th:cumul}, 
for any $r\>2$, the asymptotic growth rate of the $r$th cumulant in (\ref{bKphi})  can be computed as
\begin{align}
\nonumber
    \lim_{t\to +\infty}
    \Big(
        \frac{1}{t}
        \bK_r(\varphi(t))
    \Big)
    & =
    2^{r-1}
    \sum_{\gamma\in \{0,1\}^{r-2}}
    \Delta_{r,\gamma}
    \Tr
    \Big(
    \Pi S
    *
    \conv_{j=2}^{r-1}
    \big(
        \Pi S^{[\gamma_j]}
    \big)
    *
    \Pi S^{[1]}
    \Big)(0)\\
\label{bKphiasy}
    & =
    \frac{2^{r-2}    }{\pi}
    \sum_{\gamma\in \{0,1\}^{r-2}}
    \Delta_{r,\gamma}
    \int_{-\infty}^{+\infty}
        \Tr
    \Big(
    \Pi D(\lambda)
    \rprod_{j=2}^{r-1}
    \big(
        \Pi D^{[\gamma_j]}(\lambda)
    \big)
    \Pi D^{[1]}(\lambda)
    \Big)
    \rd \lambda,
\end{align}
where the notation (\ref{conv}) is used, 
and $D$ is the spectral density of the steady-state quantum covariance function $S$ in (\ref{S}) given by  (\ref{D})--(\ref{G}). \hfill$\square$
\end{theorem}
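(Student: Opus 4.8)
The plan is to start from the finite-horizon representation (\ref{bKphi}) of $\bK_r(\varphi(t))$ and to observe that, for each fixed $\gamma\in\{0,1\}^{r-2}$, the integrand depends on $t_1,\ldots,t_r$ only through the consecutive differences $t_1-t_2,\,t_2-t_3,\,\ldots,\,t_{r-1}-t_r$ (the remaining argument $t_1-t_r$ being their sum). Rewriting the last factor as $\Pi S(t_1-t_r)^{\rT}=\Pi S^{[1]}(t_r-t_1)$ via (\ref{funtrans}), the trace in (\ref{bKphi}) becomes a cyclic product of $r$ factors whose arguments $t_1-t_2,\ldots,t_{r-1}-t_r,\,t_r-t_1$ sum to zero. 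This is precisely the structure of an $r$-fold convolution evaluated at the origin, and the division by $t$ followed by $t\to+\infty$ is exactly the convolution-averaging limit treated in Appendix~\ref{sec:conv}.

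Concretely, I would change variables from $(t_1,\ldots,t_r)$ to $(s,\tau_1,\ldots,\tau_{r-1})$ with $s:=t_1$ and $\tau_k:=t_k-t_{k+1}$ (unit Jacobian). For fixed $\tau_1,\ldots,\tau_{r-1}$, the constraint $t_1,\ldots,t_r\in[0,t]$ confines $s$ to an interval of length $t-c(\tau)$, where $c(\tau)\>0$ (the range of the partial sums $\tau_1+\ldots+\tau_{k-1}$) is independent of $t$. Hence $\tfrac1t$ times this $s$-measure converges to $1$ pointwise and is bounded by $1$. Since $A$ is Hurwitz, the covariance function $S$ in (\ref{S}) decays exponentially (through (\ref{V})--(\ref{SVLambda})), so the integrand over $(\tau_1,\ldots,\tau_{r-1})\in\mR^{r-1}$ is absolutely integrable; dominated convergence then yields, for each $\gamma$,
\begin{align*}
    & \lim_{t\to+\infty}
    \frac1t
    \int_{[0,t]^r}
    \Tr
    \Big(
        \Pi S(t_1-t_2)
        \rprod_{j=2}^{r-1}
        \big(\Pi S^{[\gamma_j]}(t_j-t_{j+1})\big)
        \Pi S^{[1]}(t_r-t_1)
    \Big)
    \rd t_1\x\cdots\x\rd t_r\\
    & \quad =
    \Tr
    \Big(
        \Pi S * \conv_{j=2}^{r-1}\big(\Pi S^{[\gamma_j]}\big) * \Pi S^{[1]}
    \Big)(0).
\end{align*}
Multiplying by $2^{r-1}\Delta_{r,\gamma}$ and summing over $\gamma$ gives the first equality in (\ref{bKphiasy}).

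For the frequency-domain form I would invoke the convolution theorem. With the Fourier convention (\ref{D}), the inversion formula gives $(f_1*\cdots*f_r)(0)=\tfrac{1}{2\pi}\int_{-\infty}^{+\infty}\wh f_1(\lambda)\cdots\wh f_r(\lambda)\,\rd\lambda$. Here $\wh{\Pi S}=\Pi D$, and it remains only to check that the transformation (\ref{funtrans}) commutes with the Fourier transform, i.e. $\wh{S^{[\gamma]}}=D^{[\gamma]}$: for $\gamma=0$ this is the definition of $D$, while for $\gamma=1$ the substitution $\sigma=-\tau$ gives $\int S(-\tau)^{\rT}\re^{-i\lambda\tau}\rd\tau=\big(\int S(\sigma)\re^{i\lambda\sigma}\rd\sigma\big)^{\rT}=D(-\lambda)^{\rT}=D^{[1]}(\lambda)$. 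Substituting these transforms, using $\tfrac{2^{r-1}}{2\pi}=\tfrac{2^{r-2}}{\pi}$, and keeping the sum over $\gamma$ yields the second equality in (\ref{bKphiasy}).

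The main obstacle is the rigorous interchange of the $t\to+\infty$ limit with the $(r-1)$-fold integral, namely verifying the hypotheses of the convolution-averaging lemma of Appendix~\ref{sec:conv}. This rests entirely on the exponential decay of $S$ guaranteed by $A$ being Hurwitz, which places $\Pi S$ and each $\Pi S^{[\gamma_j]}$ in $L^1(\mR)\cap L^2(\mR)$, so that all the convolutions and their Fourier transforms are well defined and an integrable dominating function exists. The remaining work -- matching the cyclic trace to the \emph{ordered} convolution $\rprod$/$\conv$ and tracking the numerical constant -- is routine bookkeeping.
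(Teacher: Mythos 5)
Your proposal is correct and follows essentially the same route as the paper: the paper's proof is a one-line application of Lemma~\ref{lem:conv} of Appendix~\ref{sec:conv} with $f_1:=\Pi S$, $f_k:=\Pi S^{[\gamma_k]}$ for $k=2,\ldots,r-1$ and $f_r:=\Pi S^{[1]}$, which is exactly the identification you make (your change of variables and dominated-convergence argument simply reproduces the proof of that lemma inline). Your verification that $\wh{S^{[1]}}=D^{[1]}$ and the constant $2^{r-1}/(2\pi)=2^{r-2}/\pi$ also match the paper.
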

\begin{proof}
The relations (\ref{bKphiasy}) are obtained by applying Lemma~\ref{lem:conv} of Appendix~\ref{sec:conv}   to the integrals in (\ref{bKphi}) with the functions $f_1 := \Pi S$, $f_r:= \Pi S^{[1]}$ and $f_k:= \Pi S^{[\gamma_k]}$ for $k=2,\ldots, r-1$ whose Fourier transforms are $\Pi D$, $\Pi D^{[1]}$ and $\Pi D^{[\gamma_k]}$, respectively.
\end{proof}

Note that in the case $r=2$, the first of the equalities (\ref{bKphiasy}) reproduces (\ref{PiT}).
In application to the third cumulant in (\ref{bK3}), Theorem~\ref{th:asycumul} yields
\begin{align*}
\nonumber
    \lim_{t\to +\infty}
    \Big(
        \frac{1}{t}
        \bK_3(\varphi(t))
    \Big)
     & =
    8
    \Tr
    \big(
        \Pi S
        *
        \Pi S *
        \Pi S^{[1]}
    \big)(0)\\
\nonumber
    & = 8
    \int_{\mR^2}
    \Bra
        \Pi,
        S(\sigma)
        \Pi
        S(\tau-\sigma)
        \Pi
        S(\tau)^{\rT}
    \Ket
    \rd \sigma
    \rd \tau\\
    & =
    \frac{4}{\pi}
    \int_{-\infty}^{+\infty}
    \Bra
        \Pi,
        (D(\lambda) \Pi)^2 D(-\lambda)^{\rT}
    \Ket
    \rd \lambda.
\end{align*}
In comparison with the classical case, the representation (\ref{bKphiasy}) involves the sum of terms whose number grows exponentially with $r$. The complicated structure of these terms, which are controlled by successive inversions in permutations, and the combinatorial nature of the coefficients $\Delta_{r,\gamma}$ give rise to the problem of developing a recurrence relation for the cumulant rates.

\section{Large deviations estimates}\label{sec:large}

Theorems~\ref{th:cumul} and \ref{th:asycumul} of the previous section can be employed for obtaining large deviations estimates for the process $\varphi$ in (\ref{phi_psi}).\footnote{Such estimates are physically meaningful in the case $\Pi\succcurlyeq  0$ when the self-adjoint quantum variable $\varphi(t)$ is a positive semi-definite operator.} More precisely, for any given time $t>0$, application of the Cramer inequality \cite{S_1996} (see also \cite[Section~3.5]{DE_1997}) to the probability distribution $E_t$ of $\varphi(t)$ yields the inequality
\begin{equation}
\label{cramer}
    \frac{1}{t}
    \ln
    E_t([\eps t, +\infty))
    \<
    \inf_{\theta\>0}
    \Big(
        \frac{1}{t}
        \ln \Xi_{\theta}(t) - \theta \eps
    \Big)
\end{equation}
in terms of the QEF (\ref{QEF}) for any $\eps>0$. Similarly to the classical case, the function being minimised in (\ref{cramer}) is convex with respect to $\theta$ and vanishes at $\theta=0$. The following theorem leads to an upper bound for the right-hand side of (\ref{cramer})\footnote{which is the negative of the Legendre transform of $\frac{1}{t}
        \ln \Xi_{\theta}(t)$ as a function of $\theta\>0$}, uniform in time $t$. For its formulation, we need an auxiliary function $N$ associated with the quantum covariance function $S$ in (\ref{S}) by
\begin{equation}
\label{N}
    N(\tau)
    :=
    \|\sqrt{\Pi} S(\tau)\sqrt{\Pi}\|,
\end{equation}
where $\|\cdot\|$ is the $\ell_2$-induced operator norm of a matrix. Note that $N$ is an $\mR_+$-valued integrable function which is even in view of the second equality in (\ref{SVLambda}) and the invariance of the matrix norm $\|\cdot\|$ under the complex conjugate transpose:
\begin{equation}
\label{Nsymm}
    N(-\tau)
    =
    \|(\sqrt{\Pi} S(\tau)\sqrt{\Pi})^*\|
    =
    N(\tau).
\end{equation}

\begin{theorem}
\label{th:dev}
Suppose the conditions of Theorem~\ref{th:cumul} are satisfied. Also, let the risk-sensitivity parameter $\theta$ belong to the interval
\begin{equation}
\label{thetarange}
    0 \< \theta < \frac{1}{2\|F\|_{\infty}},
\end{equation}
where $\|F\|_{\infty}$ is the $L_{\infty}$-norm of the Fourier transform $F$ of the function $N$ from (\ref{N}) given by $F(\lambda):= \int_{-\infty}^{+\infty} N(\tau) \re^{-i\lambda \tau} \rd \lambda$. Then the QEF (\ref{QEF}) admits the upper bound
\begin{equation}
\label{upQEF}
    \sup_{t>0}
    \Big(
        \frac{1}{t}
        \ln \Xi_{\theta}(t)
    \Big)
    \<
    -\frac{n}{4\pi}
    \int_{-\infty}^{+\infty}
    \ln
    (1 - 2\theta F(\lambda))
    \rd
    \lambda.
\end{equation}
\hfill$\square$
\end{theorem}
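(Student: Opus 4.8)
The plan is to combine the exact cumulant representation of Theorem~\ref{th:cumul} with elementary operator-norm estimates, and then to resum the resulting series into the logarithm appearing on the right-hand side of (\ref{upQEF}). The starting point is the cumulant expansion $\ln\Xi_\theta(t) = \sum_{r\>1}\frac{\theta^r}{r!}\bK_r(\varphi(t))$ from (\ref{expquad1}). Since $\theta\>0$ and every real number is dominated by its absolute value, it suffices to bound $\sum_{r\>1}\frac{\theta^r}{r!}|\bK_r(\varphi(t))|$ from above. For each $r$ I would take (\ref{bKphi}), rewrite the trace integrand in the factorised form $\Tr\rprod_{j=1}^r K^{[\gamma_j]}(t_j-t_{j+1})$ with $K:=\sqrt{\Pi}\,S\sqrt{\Pi}$ (as in (\ref{KS}), with $t_{r+1}:=t_1$ and $\gamma_1=0$, $\gamma_r=1$), and apply the elementary inequality $|\Tr(M_1\cdots M_r)|\<n\prod_k\|M_k\|$ for the spectral norm. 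Because the transformation $(\cdot)^{[\gamma_j]}$ only transposes or reflects its argument, one has $\|K^{[\gamma_j]}(\tau)\| = N(\tau)$ for $\gamma_j\in\{0,1\}$, by (\ref{N}) and the evenness (\ref{Nsymm}). Using $\sum_{\gamma}\Delta_{r,\gamma} = (r-1)!$ from (\ref{sum}) (the integrand bound being $\gamma$-independent), this yields
\[
    |\bK_r(\varphi(t))| \< 2^{r-1}n(r-1)!\,I_r(t),
    \qquad
    I_r(t):=\int_{[0,t]^r}\prod_{j=1}^r N(t_j-t_{j+1})\,\rd t_1\cdots\rd t_r,
\]
where $t_{r+1}:=t_1$.

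The second step is to control the cyclic convolution $I_r(t)$ uniformly in $t$. Since $N$ is pointwise nonnegative, enlarging the domain of $t_2,\ldots,t_r$ from $[0,t]$ to $\mR$ can only increase the integral; fixing $t_1$ and substituting $u_j:=t_j-t_1$, translation invariance and the evenness of $N$ turn the inner integral into the value at the origin of the $r$-fold convolution, so that $I_r(t)\<t\,N^{*r}(0)$ and hence $\frac1t I_r(t)\<N^{*r}(0)$. By the convolution theorem and Fourier inversion at the origin, $N^{*r}(0) = \frac{1}{2\pi}\int_{-\infty}^{+\infty}F(\lambda)^r\,\rd\lambda$, where $F$ is the Fourier transform of $N$ as in the statement; this quantity is nonnegative since it equals $\int_{-\infty}^{+\infty}N^{*(r-1)}(y)N(y)\,\rd y$, an integral of products of nonnegative functions.

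Combining the two steps gives $\frac1t\ln\Xi_\theta(t)\<\frac n2\sum_{r\>1}\frac{(2\theta)^r}{r}\,\frac{1}{2\pi}\int_{-\infty}^{+\infty}F(\lambda)^r\,\rd\lambda$. It remains to interchange summation and integration and to sum the series $\sum_{r\>1}\frac{x^r}{r} = -\ln(1-x)$ with $x=2\theta F(\lambda)$, which produces exactly $-\frac{n}{4\pi}\int_{-\infty}^{+\infty}\ln(1-2\theta F(\lambda))\,\rd\lambda$; taking the supremum over $t>0$ then yields (\ref{upQEF}). The role of the hypothesis (\ref{thetarange}) is precisely to make these manipulations legitimate: since $N\>0$ one has $\|F\|_{\infty} = F(0) = \int_{-\infty}^{+\infty}N(\tau)\,\rd\tau$, so that $|2\theta F(\lambda)|\<2\theta\|F\|_{\infty}<1$ uniformly in $\lambda$, which guarantees $1-2\theta F(\lambda)>0$, the convergence of the geometric-logarithmic series, and the absolute convergence of the cumulant series (the latter also retroactively justifying (\ref{expquad1}) and the finiteness of the QEF in this range).

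I expect the main obstacle to be the analytic justification in the last paragraph rather than the algebra: verifying rigorously that the sum and the integral may be exchanged, and that $F\in L^1$ so that Fourier inversion holds pointwise at the origin (needed for the $r=1$ term) and the target integral $\int_{-\infty}^{+\infty}\ln(1-2\theta F)\,\rd\lambda$ converges. Here one would use that $A$ is Hurwitz, so $S(\tau)=\re^{\tau A}(P+i\Theta)$ and hence $N$ decay exponentially, with a corner at $\tau=0$ produced by the even extension; this forces $|F(\lambda)|=O(\lambda^{-2})$ and places $F$ in $L^1\cap L^2$. A dominated-convergence argument, with the terms $r\>2$ controlled by $\int_{-\infty}^{+\infty}|F|^r\,\rd\lambda\<\|F\|_{\infty}^{r-2}\int_{-\infty}^{+\infty}|F|^2\,\rd\lambda$, then closes the interchange and completes the estimate.
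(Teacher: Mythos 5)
Your proposal is correct and follows essentially the same route as the paper's proof: bound each cumulant from Theorem~\ref{th:cumul} via $|\Tr K|\< n\|K\|$ and submultiplicativity of the operator norm (using $\|\sqrt{\Pi}S^{[\gamma]}(\tau)\sqrt{\Pi}\|=N(\tau)$ and $\sum_\gamma\Delta_{r,\gamma}=(r-1)!$), dominate the cyclic integral by $tN^{*r}(0)=\frac{t}{2\pi}\int F^r\rd\lambda$ using the nonnegativity of $N$, and resum the series $\sum_r\frac{x^r}{r}$ into the logarithm under the condition (\ref{thetarange}). Your closing remarks on the $L^1$ and decay properties of $F$ needed to justify the interchange of sum and integral are a welcome addition to what the paper leaves implicit.
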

\begin{proof}
Similarly to (\ref{Nsymm}), from the invariance of the operator norm $\|\cdot\|$ with respect to the transpose and the symmetry of the function $N$,  it follows that
\begin{equation}
\label{Nsymm2}
    \|\sqrt{\Pi} S^{[1]}(\tau)\sqrt{\Pi}\|
    =
    \|(\sqrt{\Pi} S(-\tau)\sqrt{\Pi})^{\rT}\|
    =
    N(\tau),
\end{equation}
where use is made of the transformation (\ref{funtrans}). A combination of (\ref{Nsymm2}) with the submultiplicativity  of the operator norm leads to
\begin{align}
\nonumber
    \Big|
        \int_{[0,t]^r}
        &
    \Tr
    \Big(
    \Pi S(t_1-t_2)
    \rprod_{j=2}^{r-1}
    \big(
        \Pi S^{[\gamma_j]}(t_j-t_{j+1})
    \big)
    \Pi S(t_1-t_r)^{\rT}
    \Big)
    \rd t_1\x \ldots \x \rd t_r
    \Big|\\
\nonumber
    & \<
    n
        \int_{[0,t]^r}
    \Big\|
        \sqrt{\Pi} S(t_1-t_2)\sqrt{\Pi}
    \rprod_{j=2}^{r-1}
    \big(
        \sqrt{\Pi}  S^{[\gamma_j]}(t_j-t_{j+1})
        \sqrt{\Pi}
    \big)
    \sqrt{\Pi} S(t_1-t_r)^{\rT} \sqrt{\Pi}
    \Big\|
    \rd t_1\x \ldots \x \rd t_r
    \\
\nonumber
    & \<
    n
        \int_{[0,t]^r}
    \prod_{j=1}^r
    N(t_j-t_{j+1})
    \rd t_1\x \ldots \x \rd t_r\\
\label{upint}
    & \<
    nt N^{*r}(0)
    =
    \frac{nt}{2\pi}
    \int_{-\infty}^{+\infty}
    F(\lambda)^r
    \rd
    \lambda,
\end{align}
where $N^{*r}$ denotes the $r$-fold convolution of the function $N$ with itself.
The last inequality in (\ref{upint}) follows from the nonnegativeness of $N$ and the identity (\ref{fg1}) in the proof of Lemma~\ref{lem:conv}, while the first equality uses the  upper bound $|\Tr K| \< n \|K\|$ which holds for any matrix $K\in \mC^{n\x n}$.  In view of (\ref{upint}) and (\ref{sum}), the $r$th cumulant in (\ref{bKphi}) satisfies the inequality
\begin{align}
\nonumber
    \bK_r(\varphi(t))
    & \<
    2^{r-1}
    \sum_{\gamma\in \{0,1\}^{r-2}}
    \Delta_{r,\gamma}
    \frac{nt}{2\pi}
    \int_{-\infty}^{+\infty}
    F(\lambda)^r
    \rd
    \lambda\\
\label{upbK}
    & =
    (r-1)!
    2^{r-2}
    \frac{nt}{\pi}
    \int_{-\infty}^{+\infty}
    F(\lambda)^r
    \rd
    \lambda
\end{align}
for any $r=1,2,3,\ldots$. Assuming that $\theta$ is nonnegative and sufficiently small in the sense of (\ref{thetarange}),  substitution of (\ref{upbK}) into (\ref{expquad1}) leads to
\begin{align}
\nonumber
    \ln \Xi_{\theta}(t)
    & \<
    \sum_{r=1}^{+\infty}
    \frac{\theta^r}{r!}
    (r-1)!
    2^{r-2}
    \frac{nt}{\pi}
    \int_{-\infty}^{+\infty}
    F(\lambda)^r
    \rd
    \lambda\\
\nonumber
    & =
    \frac{nt}{4\pi}
    \sum_{r=1}^{+\infty}
    \frac{1}{r}
    \int_{-\infty}^{+\infty}
    (2\theta F(\lambda))^r
    \rd
    \lambda\\
\label{upQEF1}
    & =
    -\frac{nt}{4\pi}
    \int_{-\infty}^{+\infty}
    \ln
    (1 - 2\theta F(\lambda))
    \rd
    \lambda.
\end{align}
The inequality (\ref{upQEF}) can now be obtained by dividing both sides of (\ref{upQEF1}) by $t>0$ and using the fact that $t$ is otherwise arbitrary.
\end{proof}

A combination of (\ref{cramer}) with (\ref{upQEF}) of Theorem~\ref{th:dev} leads to
\begin{equation}
\label{cramer1}
    \sup_{t>0}
    \Big(
    \frac{1}{t}
    \ln
    E_t([\eps t, +\infty))
    \Big)
    \<
    \inf_{0\< \theta< \frac{1}{2\|F\|_{\infty}}}
    \Big(
    -\frac{n}{4\pi}
    \int_{-\infty}^{+\infty}
    \ln
    (1 - 2\theta F(\lambda))
    \rd
    \lambda
    -
        \theta \eps
    \Big).
\end{equation}
The function under minimization in (\ref{cramer1}) is strictly convex with respect to $\theta$ in the interval  (\ref{thetarange}) and hence, has a unique minimum. The optimal value of $\theta$ can be  found from the equation
\begin{equation}
\label{opt}
    -\frac{n}{4\pi}
    \d_{\theta}
    \int_{-\infty}^{+\infty}
    \ln
    (1 - 2\theta F(\lambda))
    \rd
    \lambda
    =
    \frac{n}{2\pi}
    \int_{-\infty}^{+\infty}
    \frac{F(\lambda)}
    {1 - 2\theta F(\lambda)}
    \rd
    \lambda
    =
    \eps
\end{equation}
whose left-hand side is a strictly increasing function of $\theta$.  The solution $\theta\>0$ exists and is unique if the ``scale'' parameter $\eps$ is large enough in the sense that
\begin{equation}
\label{epsbig}
    \frac{\eps}{n}
    \>
    \frac{1}{2\pi}
    \int_{-\infty}^{+\infty}
    F(\lambda)
    \rd \lambda
    =
    N(0)
    =
    \|\sqrt{\Pi} (P+i\Theta)\sqrt{\Pi}\|.
\end{equation}
Note that (\ref{upQEF}), (\ref{cramer1}) and (\ref{opt}) remain valid if the function $N$ in (\ref{N}) is replaced with its even upper estimate. Since the quantum covariance function $S$ decays exponentially fast at infinity, such an upper bound for $N$ can be found among functions with a rational Fourier transform $F$. In this case, the integral  in (\ref{cramer1}) lends itself to effective evaluation \cite{MG_1990} as does the solution of (\ref{opt}).

\begin{theorem}
\label{th:devup}
Suppose the conditions of Theorem~\ref{th:cumul} are satisfied.
Then the tail of the probability distribution $E_t$ of the process $\varphi(t)$ in (\ref{phi_psi}) admits an upper bound
\begin{equation}
\label{cramer2}
    \sup_{t>0}
    \Big(
    \frac{1}{t}
    \ln
    E_t([\eps t, +\infty))
    \Big)
    \<
    \frac{n\mu}{4}
    \Big(
        2 - \frac{n\alpha}{\eps} - \frac{\eps}{n\alpha}
    \Big),
    \qquad
    \eps \> n\alpha,
\end{equation}
where
\begin{equation}
\label{alpha}
    \alpha
    :=
    \|\sqrt{\Pi} \sqrt{\Gamma}\|
    \|\Gamma^{-1/2}(P+i\Theta)\sqrt{\Pi}\|.
\end{equation}
Here, $(\mu,\Gamma)$ is any pair of a scalar $\mu>0$ and a real positive definite symmetric matrix $\Gamma$ of order $n$  satisfying the algebraic Lyapunov inequality (ALI)
\begin{equation}
\label{ALI}
    A\Gamma + \Gamma A^{\rT}\preccurlyeq - 2\mu \Gamma.
\end{equation}

\end{theorem}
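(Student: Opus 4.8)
The plan is to invoke the remark following Theorem~\ref{th:dev}, namely that the bounds (\ref{upQEF}) and (\ref{cramer1}) remain valid when the function $N$ in (\ref{N}) is replaced by any even upper estimate, and to construct such an estimate explicitly from the ALI (\ref{ALI}). First I would convert (\ref{ALI}) into a weighted exponential decay bound for $\re^{\tau A}$. Pre- and post-multiplying $A\Gamma+\Gamma A^{\rT}\preccurlyeq -2\mu\Gamma$ by $\Gamma^{-1/2}$ gives $\wt{A}+\wt{A}^{\rT}\preccurlyeq -2\mu I_n$ for $\wt{A}:=\Gamma^{-1/2}A\Gamma^{1/2}$, whence, for $v(\tau):=\re^{\tau\wt{A}}v_0$, one has $\d_\tau\|v\|^2=v^*(\wt{A}+\wt{A}^{\rT})v\< -2\mu\|v\|^2$, so that
\[
    \|\Gamma^{-1/2}\re^{\tau A}\Gamma^{1/2}\|
    =
    \|\re^{\tau\wt{A}}\|
    \<
    \re^{-\mu\tau},
    \qquad \tau\>0.
\]

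Next I would bound $N$ directly. Using $S(\tau)=\re^{\tau A}(P+i\Theta)$ for $\tau\>0$ (see (\ref{S})), inserting $\Gamma^{1/2}\Gamma^{-1/2}$ inside (\ref{N}), and applying submultiplicativity of the operator norm yields
\[
    N(\tau)
    \<
    \|\sqrt{\Pi}\sqrt{\Gamma}\|\,
    \|\Gamma^{-1/2}\re^{\tau A}\Gamma^{1/2}\|\,
    \|\Gamma^{-1/2}(P+i\Theta)\sqrt{\Pi}\|
    \<
    \alpha\,\re^{-\mu\tau},
    \qquad \tau\>0,
\]
with $\alpha$ as in (\ref{alpha}). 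Since $N$ is even by (\ref{Nsymm}), the even majorant $\wt{N}(\tau):=\alpha\re^{-\mu|\tau|}$ dominates $N$ on all of $\mR$, and its Fourier transform is the rational function $\wt{F}(\lambda)=2\alpha\mu/(\mu^2+\lambda^2)$, with $\|\wt{F}\|_{\infty}=\wt{F}(0)=2\alpha/\mu$, so that the admissible range (\ref{thetarange}) becomes $0\<\theta<\mu/(4\alpha)$.

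With $\wt{F}$ in place of $F$, the logarithmic integral in (\ref{upQEF}) closes in elementary terms. Writing $1-2\theta\wt{F}(\lambda)=(\lambda^2+\beta^2)/(\lambda^2+\mu^2)$ with $\beta:=\sqrt{\mu^2-4\theta\alpha\mu}$, and differentiating in a parameter to obtain $\int_{-\infty}^{+\infty}\ln\frac{\lambda^2+\beta^2}{\lambda^2+\mu^2}\rd\lambda=2\pi(\beta-\mu)$, I would arrive at the time-uniform bound $\frac{1}{t}\ln\Xi_{\theta}(t)\<\frac{n\mu}{2}(1-\sqrt{1-4\theta\alpha/\mu})$. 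Feeding this into (\ref{cramer1}) leaves a scalar minimization over $\theta$; the stationarity equation (\ref{opt}) with $\wt{F}$ collapses to $\sqrt{1-4\theta\alpha/\mu}=n\alpha/\eps$, which admits a root in $[0,\mu/(4\alpha))$ precisely when $\eps\>n\alpha$, consistently with (\ref{epsbig}). Back-substitution of this optimal $\theta$ then simplifies, after collecting the $n^2\alpha/\eps$ terms, to $\frac{n\mu}{4}(2-\frac{n\alpha}{\eps}-\frac{\eps}{n\alpha})$, which is exactly (\ref{cramer2}).

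The only genuinely delicate step I anticipate is the first one: deriving the weighted decay bound from the ALI. Because $A$ need not be normal, the decay rate must be read off from the symmetric part of the congruence-transformed matrix $\wt{A}$ (equivalently, from its logarithmic norm) rather than from its spectrum, and it is the congruence by $\Gamma^{-1/2}$ that makes the constant $\mu$ in (\ref{ALI}) serve as this rate. Once this estimate is secured, the remaining steps --- the norm factorization producing $\alpha$, the rational Fourier transform, the closed-form log-integral, and the one-dimensional optimization --- are routine.
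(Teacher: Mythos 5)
Your proposal is correct and follows essentially the same route as the paper's proof: the ALI is converted into the contraction bound $\|\Gamma^{-1/2}\re^{\tau A}\sqrt{\Gamma}\|\<\re^{-\mu\tau}$ (the paper via a matrix differential inequality, you via a Gronwall argument on $\|\re^{\tau\wt{A}}v_0\|^2$, which is equivalent), $N$ is majorized by $\alpha\re^{-\mu|\tau|}$, the resulting rational Fourier transform makes the logarithmic integral in the Theorem~\ref{th:dev} bound closed-form, and the one-dimensional minimization over $\theta\in[0,\mu/(4\alpha))$ yields (\ref{cramer2}) under $\eps\>n\alpha$. The only differences are cosmetic (direct evaluation of the log-integral versus the paper's parametric differentiation).
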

\begin{proof}
The pairs $(\mu,\Gamma)$, satisfying (\ref{ALI}), exist due to the matrix $A$ being Hurwitz. Any such pair satisfies the differential matrix inequality
\begin{equation}
\label{ALIdiff}
    \d_{\tau}
    \big(\re^{2\mu\tau}\re^{\tau A} \Gamma \re^{\tau A^{\rT}}\big)
    =
    \re^{2\mu\tau}
    \re^{\tau A} (A\Gamma + \Gamma A^{\rT} + 2\mu\Gamma) \re^{\tau A^{\rT}}\big)
    \preccurlyeq
    0.
\end{equation}
As a matrix-valued version of the Gronwall-Bellman lemma, the integration of (\ref{ALIdiff}) over $\tau$ leads to $    \re^{\tau A} \Gamma \re^{\tau A^{\rT}}
    \preccurlyeq
    \re^{-2\mu\tau}\Gamma
$, which is equivalent to the contraction property
\begin{equation}
\label{GBup}
    \|\Gamma^{-1/2} \re^{\tau A} \sqrt{\Gamma}\|
    \<
    \re^{-\mu\tau},
    \qquad
    \tau\>0.
\end{equation}
A combination of (\ref{GBup}) with (\ref{S}) and the submultiplicativity of the operator norm allows the exponential decay  of the function (\ref{N}) to be quantified as
\begin{align}
\nonumber
    N(\tau)
    & =
    \|\sqrt{\Pi} \sqrt{\Gamma} \Gamma^{-1/2} \re^{\tau A} \sqrt{\Gamma} \Gamma^{-1/2}(P+i\Theta)\sqrt{\Pi}\|\\
\nonumber
    & \<
    \|\sqrt{\Pi} \sqrt{\Gamma}\|
    \|\Gamma^{-1/2} \re^{\tau A} \sqrt{\Gamma}\|
    \|\Gamma^{-1/2}(P+i\Theta)\sqrt{\Pi}\|\\
\label{Nup}
    & \<
    \alpha
    \re^{-\mu\tau}
\end{align}
for all $\tau\>0$, where $\alpha$ is given by (\ref{alpha}).   Since the function $N$ is even, the inequality (\ref{Nup}) can be extended to all $\tau$ as
\begin{equation}
\label{Nup1}
    N(\tau) \< \alpha \re^{-\mu |\tau|}=:\wh{N}(\tau),
    \qquad
    \tau \in \mR.
\end{equation}
The right-hand side of (\ref{Nup1}) has the Fourier transform
\begin{equation}
\label{Fup}
    \wh{F}(\lambda)
    :=
    \int_{-\infty}^{+\infty}
    \wh{N}(\tau)
    \re^{-i\lambda \tau}
    \rd \tau
    =
    \frac{2\alpha \mu}{\lambda^2 + \mu^2},
\end{equation}
which is a rational function of $\lambda$ with the $L_{\infty}$-norm
\begin{equation}
\label{Fnorm}
  \|\wh{F}\|_{\infty}
  =
  \frac{2\alpha}{\mu}.
\end{equation}
The corresponding integral in (\ref{cramer1}) can be  evaluated by using a contour integration or through the parametric differentiation
\begin{align}
\nonumber
    -
    \d_{\theta}
    \int_{-\infty}^{+\infty}
    \ln
    (1 - 2\theta \wh{F}(\lambda))
    \rd
    \lambda
    & =
    2
    \int_{-\infty}^{+\infty}
    \frac{\wh{F}(\lambda)}{1 - 2\theta \wh{F}(\lambda)}
    \rd
    \lambda    \\
\nonumber
    &=
    4 \alpha \mu
    \int_{-\infty}^{+\infty}
    \frac{1}{\lambda^2 + \mu^2 - 4\theta \alpha \mu}
    \rd
    \lambda    \\
\label{diffpar}
    & =
    \frac{4\pi\alpha \mu}{\sqrt{\mu^2 - 4\theta \alpha \mu}}.
\end{align}
After integration over $\theta$, this yields
\begin{equation}
\label{intpar}
-
    \int_{-\infty}^{+\infty}
    \ln
    (1 - 2\theta \wh{F}(\lambda))
    \rd
    \lambda
    =
    2\pi
    \big(
        \mu - \sqrt{\mu^2 - 4\theta \alpha \mu}
    \big)
\end{equation}
for any $\theta$ satisfying
\begin{equation}
\label{thetarange1}
    0 \< \theta < \frac{1}{2\|\wh{F}\|_{\infty}} = \frac{\mu}{4\alpha}
\end{equation}
in view of (\ref{thetarange}) and (\ref{Fnorm}). By appropriately modifying (\ref{cramer1})--(\ref{epsbig})  for the functions $\wh{N}$ and $\wh{F}$ in (\ref{Nup1}) and (\ref{Fup}), and using (\ref{diffpar}) and (\ref{intpar}), it follows that
\begin{align*}
    \sup_{t>0}
    \Big(
    \frac{1}{t}
    \ln
    E_t([\eps t, +\infty))
    \Big)
    & \<
    \inf_{0\< \theta< \frac{1}{2\|\wh{F}\|_{\infty}}}
    \Big(
    -\frac{n}{4\pi}
    \int_{-\infty}^{+\infty}
    \ln
    (1 - 2\theta \wh{F}(\lambda))
    \rd
    \lambda
    -
        \theta \eps
    \Big)\\
    & =
    \inf_{0\< \theta< \frac{\mu}{4\alpha}}
    \Big(
        \frac{n}{2}
    \big(
        \mu - \sqrt{\mu^2 - 4\theta \alpha \mu}
    \big)
    -
    \theta \eps
    \Big)\\
    & =
        \frac{n\mu}{2}
    \Big(
        1 - \frac{n\alpha}{\eps}
    \Big)
    -
    \frac{\mu\eps}{4\alpha}\Big(1-\Big(\frac{n\alpha}{\eps}\Big)^2\Big)\\
    & =
    \frac{n\mu}{4}
    \Big(
        2 - \frac{n\alpha}{\eps} - \frac{\eps}{n\alpha}
    \Big),
\end{align*}
where the minimum over the interval (\ref{thetarange1}) is achieved at $\theta = \frac{\mu}{4\alpha}(1-(\frac{n\alpha}{\eps})^2)$, provided $\eps\> n\alpha$, thus establishing (\ref{cramer2}).
\end{proof}

Note that the ALI (\ref{ALI}) has a positive definite solution $\Gamma$ for any $\mu<-\max \{\Re \lambda:\ \lambda \in \fS\}$, where $\fS$ is the spectrum of the Hurwitz matrix $A$. A particular choice of such a matrix $\Gamma$ (which can be carried out, for example, by using the eigenbasis of $A$ in the case when $A$ is diagonalizable) affects the constant $\alpha$ in (\ref{alpha}) which enters (\ref{cramer2}) together with $\mu$.

\section{Classical covariance correspondence}\label{sec:CCCP}

We will now discuss a correspondence between vectors of self-adjoint quantum variables and auxiliary classical random vectors at the level of the second-order  moments.
More precisely, let $\zeta$ be a classical $\mC^n$-valued random vector given by
\begin{equation}
\label{zeta}
    \zeta:=\xi+i\eta,
    \qquad
    \xi := \Re \zeta,
    \qquad
    \eta:=\Im \zeta.
\end{equation}
Here, $\xi$ and $\eta$ are classical random vectors with values in $\mR^n$, which are also assembled into the $\mR^{2n}$-valued vector
\begin{equation}
\label{xieta}
    \vartheta
    :=
    \begin{bmatrix}
        \xi\\
        \eta
    \end{bmatrix}.
\end{equation}
Suppose $\zeta$ has zero mean and is square integrable, that is, $\bE\zeta = 0$ and $\bE(|\zeta|^2)<+\infty$, where $|\zeta|^2 = |\xi|^2 + |\eta|^2$. Then its covariance matrix is given by
\begin{align}
\nonumber
    \cov(\zeta)
    & =
    \bE (\zeta\zeta^*)
    =
    (\begin{bmatrix}
        1 & i
    \end{bmatrix}
    \ox I_n)
    \cov(\vartheta)
    \Big(\begin{bmatrix}
        1 \\ -i
    \end{bmatrix}
    \ox
    I_n\Big)\\
\nonumber
    & =
    \bE(\xi\xi^{\rT}) +
    \bE(\eta\eta^{\rT})
    -
    i
    (\bE(\xi\eta^{\rT}) -
    \bE(\eta\xi^{\rT}))\\
\label{covzeta}
    & =
    \cov(\xi)+
    \cov(\eta) -
    2i\bA(\cov(\xi,\eta)),
\end{align}
where $\bA(M):= \frac{1}{2}(M-M^{\rT})$  denotes the antisymmetrizer of square matrices which is applied here to the cross-covariance matrix  $\cov(\xi,\eta) = \bE(\xi\eta^{\rT})$ of the zero-mean vectors $\xi$ and $\eta$. Of particular interest for our purposes is the case when $\xi$ and $\eta$ in (\ref{xieta}) have identical covariance matrices and an antisymmetric cross-covariance matrix:
\begin{equation}
\label{Exieta}
    \cov(\vartheta)
    =
    \begin{bmatrix}
        \bE(\xi\xi^{\rT}) & \bE(\xi\eta^{\rT}) \\
        \bE(\eta\xi^{\rT})  & \bE(\eta\eta^{\rT})
    \end{bmatrix}
    =
    \frac{1}{2}
    \begin{bmatrix}
        P & -\Theta\\
        \Theta & P
    \end{bmatrix}.
\end{equation}
Here, $P$ is a real positive semi-definite symmetric matrix of order $n$, and $\Theta$ is a real antisymmetric matrix of order $n$. In this case, (\ref{covzeta}) takes the form
\begin{equation}
\label{covzeta1}
    \cov(\zeta) = P+i\Theta,
\end{equation}
where $P+i\Theta$ is isospectral (up to multiplicity of eigenvalues) to the matrix ${\small     \begin{bmatrix}
        P & -\Theta\\
        \Theta & P
    \end{bmatrix}}$ on the right-hand side of (\ref{Exieta}), with both matrices being positive semi-definite.
Now, let $\zeta$ depend on time and form  a $\mC^n$-valued  Markov diffusion process governed by a classical SDE
\begin{equation}
\label{dzeta}
    \rd \zeta = A \zeta \rd t + \frac{1}{\sqrt{2}}B\Omega \rd \omega
\end{equation}
driven by a standard Wiener process $\omega$ in $\mR^m$. Here, $\Omega$ is the matrix from (\ref{WW}),  and the matrices $A$ and $B$ are  given by (\ref{A_B}) as before. Since both $A$ and $B$ are real, then, due to the structure of $\Omega$, the SDE (\ref{dzeta}) can be represented in terms of the real and imaginary parts in (\ref{zeta}) as
\begin{equation}
\label{dxi_deta}
    \rd \xi
     = A \xi \rd t + \frac{1}{\sqrt{2}}B\rd \omega,
     \qquad
    \rd \eta
     = A \eta \rd t + \frac{1}{\sqrt{2}}BJ \rd \omega.
\end{equation}
Equivalently, the augmented process $\vartheta$ in (\ref{xieta}) satisfies the SDE
\begin{align}
\label{dxieta}
    \rd \vartheta
    & = (I_2\ox A)\vartheta \rd t + \frac{1}{\sqrt{2}}(I_2 \ox B)\begin{bmatrix}I_m\\ J\end{bmatrix}\rd \omega,
\end{align}
The following lemma establishes a correspondence between the classical Markov diffusion processes $\xi$, $\eta$, $\zeta$,  and the linear quantum stochastic system considered in the previous sections.

\begin{lemma}
\label{lem:CCCP}
Suppose the matrices $A$ and $B$ are given by (\ref{A_B}), with $A$ being Hurwitz.
Then the classical Markov diffusion process $\vartheta$ in (\ref{xieta}), governed by (\ref{dxieta}), has a unique invariant measure (the invariant joint distribution of $\xi$ and $\eta$ in (\ref{dxi_deta})) which is a Gaussian distribution in $\mR^{2n}$ with zero mean and covariance matrix (\ref{Exieta}), where $P$ is given by (\ref{P}) and $\Theta$ is the CCR matrix from (\ref{Theta}).  The corresponding invariant measure of the process $\zeta$ in (\ref{zeta}) and (\ref{dzeta}) is a Gaussian distribution in $\mC^n$ with zero mean and the covariance matrix (\ref{covzeta1}).
Furthermore, if $\vartheta$ and the quantum process $X$ are initialised at the  corresponding invariant Gaussian states, then their two-point covariance functions reproduce each other in the sense that
\begin{equation}
\label{EXXts}
    \bE (\zeta(t)\zeta(s)^*)
    =
    \bE (X(t)X(s)^{\rT})
    =
    \re^{(t-s)A}
    (P+i\Theta)
\end{equation}
for all $t\>s\>0$, with the classical expectation on the left-hand side and the quantum expectation on the right-hand side. \hfill$\square$
\end{lemma}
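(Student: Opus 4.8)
The plan is to treat $\vartheta$ as a classical linear Ornstein--Uhlenbeck process, determine its invariant covariance by solving an algebraic Lyapunov equation, and then read off the remaining claims by routine linear-SDE computations. First I would observe that the drift matrix $I_2\ox A$ in (\ref{dxieta}) is Hurwitz, since its spectrum coincides with that of $A$; hence the Gaussian process $\vartheta$ admits a unique invariant measure, which is zero-mean Gaussian, and its covariance $\Sigma_{\vartheta}$ is the unique solution of the ALE
\begin{equation*}
    (I_2\ox A)\Sigma_{\vartheta} + \Sigma_{\vartheta} (I_2\ox A)^{\rT} + D = 0,
    \qquad
    D := \tfrac{1}{2}(I_2\ox B)\begin{bmatrix}I_m\\ J\end{bmatrix}\begin{bmatrix}I_m & J^{\rT}\end{bmatrix}(I_2\ox B)^{\rT}.
\end{equation*}

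The next step is to compute $D$ explicitly. Using $J^{\rT}=-J$ and $J^2=-I_m$ from (\ref{J}), the inner factor collapses to $\begin{bmatrix}I_m\\ J\end{bmatrix}\begin{bmatrix}I_m & J^{\rT}\end{bmatrix} = \begin{bmatrix}I_m & -J\\ J & I_m\end{bmatrix}$, whence $D = \tfrac12\begin{bmatrix}BB^{\rT} & -BJB^{\rT}\\ BJB^{\rT} & BB^{\rT}\end{bmatrix}$. I would then verify that the candidate covariance (\ref{Exieta}), that is $\Sigma_{\vartheta}=\tfrac12\begin{bmatrix}P & -\Theta\\ \Theta & P\end{bmatrix}$, satisfies this ALE by inspecting its four blocks. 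The two diagonal blocks reduce to $AP+PA^{\rT}+BB^{\rT}=0$, which is exactly the controllability ALE (\ref{PALE}), whereas the two off-diagonal blocks reduce to $A\Theta+\Theta A^{\rT}+BJB^{\rT}=0$, which is precisely the physical realizability condition (\ref{APR}). This block-matching is the crux of the lemma: the antisymmetric, cross-correlated structure imposed on the auxiliary classical process is preserved in time exactly because $\Theta$ obeys the quantum PR constraint, and I expect it to be the main (and essentially only nontrivial) step.

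With the invariant covariance (\ref{Exieta}) in hand, the covariance of $\zeta$ follows by substitution into the general identity (\ref{covzeta}): taking $\cov(\xi)=\cov(\eta)=\tfrac12 P$ and the antisymmetric cross-covariance $\cov(\xi,\eta)=-\tfrac12\Theta$, the real parts add to $P$ while the antisymmetrizer term contributes $i\Theta$, giving $\cov(\zeta)=P+i\Theta$ as in (\ref{covzeta1}).

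Finally, for the two-point covariance (\ref{EXXts}) I would use the integral representation of the solution of (\ref{dzeta}),
\begin{equation*}
    \zeta(t) = \re^{(t-s)A}\zeta(s) + \tfrac{1}{\sqrt2}\int_s^t \re^{(t-r)A} B\Omega\,\rd\omega(r),
    \qquad t\> s\> 0.
\end{equation*}
Since the stochastic integral has zero mean and depends only on the increments of $\omega$ over $(s,t]$, it is independent of $\zeta(s)$; hence $\bE(\zeta(t)\zeta(s)^*) = \re^{(t-s)A}\bE(\zeta(s)\zeta(s)^*) = \re^{(t-s)A}(P+i\Theta)$, where the last equality uses stationarity together with $\cov(\zeta)=P+i\Theta$. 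Comparing this with the quantum two-point covariance $\bE(X(t)X(s)^{\rT})=\re^{(t-s)A}(P+i\Theta)$ at the invariant state, already established in (\ref{S}) (equivalently (\ref{Xcovts})), completes the argument.
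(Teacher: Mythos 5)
Your proposal is correct and follows essentially the same route as the paper's proof: both reduce the invariant covariance to the block Lyapunov equation whose diagonal blocks are the controllability ALE (\ref{PALE}) and whose off-diagonal blocks are the PR condition (\ref{APR}), and both obtain (\ref{EXXts}) from the variation-of-constants formula for (\ref{dzeta}) together with stationarity. The only cosmetic difference is that you verify the candidate (\ref{Exieta}) against the ALE blockwise while the paper splits the ALE into three sub-equations and identifies their unique solutions; these are equivalent.
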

\begin{proof}
 Due to $A$ being Hurwitz,  the existence,  uniqueness and Gaussian nature of the invariant measure for the process $\vartheta$ follows from (\ref{dxieta}),  whereby the invariant distribution has zero mean and covariance matrix $\cov(\vartheta)$ satisfying the ALE
 \begin{equation}
 \label{SALE}
    (I_2\ox A)\cov(\vartheta)  + \cov(\vartheta)(I_2\ox A^{\rT})
      +  \frac{1}{2}(I_2 \ox B)\begin{bmatrix}I_m& -J\\ J & I_m\end{bmatrix}(I_2 \ox B^{\rT})
    =0,
 \end{equation}
 where use is made of the orthogonality of the real antisymmetric matrix $J$ in (\ref{J}). The ALE (\ref{SALE}) splits into three equations
 \begin{align}
 \label{xiALE}
 A\cov(\xi)+\cov(\xi)A^{\rT} + \frac{1}{2}BB^{\rT} & =0,\\
 \label{etaALE}
 A\cov(\eta)+\cov(\eta)A^{\rT} + \frac{1}{2}BB^{\rT} & =0,\\
 \label{xietaALE}
 A\cov(\xi,\eta)+\cov(\xi,\eta)A^{\rT} - \frac{1}{2}BJB^{\rT} & =0.
 \end{align}
Since the ALEs (\ref{xiALE}) and (\ref{etaALE}) are identical to (\ref{PALE}) up to a factor of $\frac{1}{2}$ (and have unique solutions due to the matrix $A$ being Hurwitz),  it follows that
 \begin{equation}
 \label{covcovP}
    \cov(\xi) = \cov(\eta) = \frac{1}{2}P,
 \end{equation}
with $P$ given by (\ref{P}). By a similar reasoning,  a comparison of (\ref{xietaALE}) with the PR condition (\ref{APR}) leads to
 \begin{equation}
 \label{covcovTheta}
    \cov(\xi,\eta) = -\frac{1}{2}\Theta,
 \end{equation}
 where $\Theta$ is the CCR matrix from (\ref{Theta}).
The relations (\ref{covcovP}) and (\ref{covcovTheta}) imply that the covariance matrix of the invariant joint Gaussian  distribution of the processes $\xi$ and $\eta$ is indeed given by (\ref{Exieta}), with (\ref{covzeta1}) describing the corresponding covariance matrix for $\zeta$. Now, the linear SDE (\ref{dzeta}) implies that
\begin{equation}
\label{zetats}
    \zeta(t)
    =
    \re^{(t-s)A}\zeta(s) + \frac{1}{\sqrt{2}}\int_s^t \re^{(t-\tau)A}B\Omega \rd \omega(\tau)
\end{equation}
for all $t\> s\> 0$.  Due to $\omega$ being a standard Wiener process independent of the initial value $\zeta(0)$, it follows from (\ref{zetats}) that
\begin{equation}
\label{EXXts2}
    \bE (\zeta(t)\zeta(s)^*)
    =
    \re^{(t-s)A}\bE (\zeta(s)\zeta(s)^*).
\end{equation}
If the quantum system is initialised at the invariant Gaussian state and $\vartheta(0)$ has the corresponding invariant Gaussian distribution, then
\begin{equation}
\label{EXXts3}
    \cov(\zeta(s)) = P+i\Theta = \cov(X(s))
\end{equation}
for all $s\>0$. Substitution of (\ref{EXXts3}) into (\ref{Xcovts}) and (\ref{EXXts2}) leads to (\ref{EXXts}).
\end{proof}

Note that Lemma~\ref{lem:CCCP}  is concerned only with the correspondence between classical Gaussian Markov diffusion  processes and quantum system variables in Gaussian quantum states at the level of covariances. This correspondence involves complex conjugation and does not extend, in general, from the second-order moments to arbitrary higher-order moments. For example,
\begin{equation}
\label{zPiz}
    \zeta^* \Pi\zeta
    =
    \xi^{\rT} \Pi\xi + \eta^{\rT} \Pi \eta
\end{equation}
is a real-valued classical random process which has the same steady-state mean value as the quantum process $\psi$ in (\ref{Epsi}):
$$
    \bE(\zeta^* \Pi\zeta)
    =
    \bra \Pi, \bE(\zeta \zeta^*)\ket
    =
    \bra \Pi, P+i\Theta\ket
    =
    \bra \Pi, P\ket,
$$
which follows from (\ref{EXXts}). At the same time, application of the classical covariance relation (\ref{covquad}) to (\ref{zPiz}) leads to
\begin{align}
\nonumber
    \var(\zeta^* \Pi\zeta )
    & =
    \var(\xi^{\rT} \Pi\xi)
    +
    \var(\eta^{\rT} \Pi \eta)
    +
    2\cov(\xi^{\rT} \Pi\xi, \eta^{\rT} \Pi\eta)\\
\nonumber
    & =
        2
    \Bra
        \Pi,
        \cov(\xi)
        \Pi
        \cov(\xi)
        +
        \cov(\eta)
        \Pi
        \cov(\eta)
        +
        2
        \cov(\xi,\eta)
        \Pi
        \cov(\eta,\xi)
    \Ket\\
\label{varzPiz}
    & =
    \Bra
        \Pi,
        P\Pi P - \Theta \Pi \Theta
    \Ket,
\end{align}
where use is also made of (\ref{covcovP}) and (\ref{covcovTheta}). Note that the right-hand side of (\ref{varzPiz}) is different  from its quantum counterpart
$$
    \var(X^{\rT}\Pi X)
    =
    2
        \Bra
            \Pi,
            P\Pi P + \Theta \Pi \Theta
        \Ket
$$
which is obtained  by letting $\sigma=\tau$ in (\ref{covpsi}) and (\ref{S}). This discrepancy (which manifests itself already at the level of the fourth-order moments) is closely related to  the absence of classical joint probability distributions for noncommuting quantum variables. Nevertheless, of interest is further comparison of the QEF (\ref{QEF}) with the corresponding risk-sensitive cost functional for the SDE (\ref{dzeta}) whose asymptotic behaviour is described by
\begin{align}
\nonumber
    \lim_{t\to +\infty}
    \Big(
    \frac{1}{t}
    \ln
    \bE \re^{\theta \int_0^t \zeta(s)^*\Pi \zeta(s)\rd s}
    \Big)
    & =
    -
    \frac{1}{4\pi}
    \int_{-\infty}^{+\infty}
    \ln\det(I_n-\theta \Pi G(i\lambda) \Omega G(i\lambda)^*)
    \rd \lambda\\
\label{class}
    & =
    \frac{1}{4\pi}
    \sum_{r=1}^{+\infty}
    \frac{\theta^r}{r}
    \int_{-\infty}^{+\infty}
    \Tr
    \big(
        (\Pi D(\lambda))^r
    \big)
    \rd \lambda.
\end{align}
Here, $G$ is the transfer function in (\ref{G}),
so that $\frac{1}{2}G(i\lambda) \Omega G(i\lambda)^*$ is the spectral density of the process $\zeta$ in (\ref{dzeta}) which is expressed in terms of the spectral density $D$ for the quantum covariance function $S$ in (\ref{D}) and (\ref{DG}). In the case $\Pi\succcurlyeq 0$, the right-hand side of  (\ref{class}) is well-defined if the risk sensitivity parameter $\theta>0$ is bounded in terms of a weighted $\cH_{\infty}$-norm of the transfer function $G$ as
$
    \theta < \frac{2}{\|\sqrt{\Pi} G \Omega\|_{\infty}^2}
$,
where we have also used the property $\Omega^2 = 2\Omega$ of the matrix $\Omega$ from (\ref{WW}). Note that $    \int_{-\infty}^{+\infty}
    \Tr
    \big(
        (\Pi D(\lambda))^r
    \big)
    \rd \lambda$ in (\ref{class}) is a classical counterpart of the weighted sum of integrals on the right-hand side of (\ref{bKphiasy}).

\section{A numerical example of the quartic approximation of the QEF and large deviations bounds}\label{sec:numer}

We will now demonstrate the computation of the quartic  approximation  of the QEF from Section~\ref{sec:quart} (in the infinite-horizon limit)  for a two-mode OQHO with $n=4$ driven by $m=4$ external fields. It is assumed that the CCR matrix in (\ref{Theta}) is given by
\begin{equation}
\label{Thetaex}
    \Theta
    =
    \frac{1}{2}\bJ\ox I_2,
\end{equation}
which, in accordance with (\ref{ThetaJ}) and (\ref{bJ}), corresponds to the system variables $q_1, q_2, p_1, p_2$ consisting of the conjugate quantum mechanical positions and momenta.
The energy and coupling matrices of the OQHO were randomly generated so as to make the matrix $A$  in (\ref{A_B}) Hurwitz:
\begin{equation}
\label{RMex}
    R
     =
    {\small\begin{bmatrix}
   -0.1027 &   1.3449 &  -0.2403 &  -1.3994\\
    1.3449 &   1.5008 &  -0.1856 &   0.9212\\
   -0.2403 &  -0.1856 &  -0.5704 &  -0.4146\\
   -1.3994 &   0.9212 &  -0.4146 &  -0.3233
    \end{bmatrix}},
    \qquad
    M
    =
    {\small\begin{bmatrix}
    0.8726 &   0.1632 &   2.1844 &  -1.9270\\
    0.1179 &  -0.8147 &  -0.0938 &   0.5214\\
   -1.5031 &   0.4037 &  -0.2942 &  -2.0544\\
    0.9218 &   0.7562 &  -0.5048 &  -0.2698
    \end{bmatrix}}
\end{equation}
(the resulting $A$ has the eigenvalues $-0.5532 \pm 2.5929i$,
  $-1.3302$, $-4.2068$). The positive definite weighting matrix $\Pi$, specifying the QEF through (\ref{phi_psi})--(\ref{QEF}), was also randomly generated:
\begin{equation}
\label{Piex}
\Pi
    =
    {\small\begin{bmatrix}
    3.5050 &  -0.5447 &   0.0672 &  -2.3918\\
   -0.5447 &   4.0758 &  -1.1876 &   0.0215\\
    0.0672 &  -1.1876 &   5.1422 &  -1.4628\\
   -2.3918 &   0.0215 &  -1.4628 &   4.5416
   \end{bmatrix}}.
\end{equation}
The infinite-horizon controllability Gramian $P$ in (\ref{P}) and the matrix $T$ in (\ref{T}) were found by successively solving the ALEs (\ref{PALE}) and (\ref{TALE}):
$$
P
    =
    {\small\begin{bmatrix}
    3.7981 &  -2.5143 &  -3.8716 &  -1.6214\\
   -2.5143 &   4.9443 &   0.5356 &   0.4305\\
   -3.8716 &   0.5356 &   6.7086 &   2.8509\\
   -1.6214 &   0.4305 &   2.8509 &   1.4473
   \end{bmatrix}},
   \qquad
    T
    =
    {\small\begin{bmatrix}
  131.5431 &-108.9564 &-138.4442 & -58.4033\\
 -108.9564 & 138.7545 &  60.4808 &  21.2105\\
 -138.4442 &  60.4808 & 204.6153 &  91.4998\\
  -58.4033 &  21.2105 &  91.4998 &  41.2158
   \end{bmatrix}}.
$$
The corresponding asymptotic growth rates for the first two cumulants of the quantum process $\varphi$ in (\ref{Ephi}) and (\ref{Varphiasy}) took the values
$$
    \lim_{t\to +\infty}
    \Big(
    \frac{1}{t}
    \bE \varphi(t)
    \Big)
     =
    74.9147,
    \qquad
   \lim_{t\to +\infty}
    \Big(
    \frac{1}{t}
    \var(\varphi(t))
    \Big)
    =
    8.9399\x 10^3.
$$
In this example, the threshold value for the risk-sensitivity parameter $\theta$ in (\ref{theta0}), within which the quartic term in (\ref{F}) can be neglected, is
$$
    \theta_0
    =
    0.0168.
$$
Since $\theta_0$ is small, the coefficients $\frac{\theta^k}{k!}$  of the higher-order moments $\bE (\varphi(t)^k)$ and cumulants $\bK_k(\varphi(t))$  in (\ref{Ximom}) and (\ref{expquad1}) (with $k\> 3$) are negligible compared to $\theta^2$ for such values of $\theta$. This makes the quartic approximation (\ref{F}) a reasonable approximation of the QEF  for the range of small values of $\theta$ in this example. Since the matrix $A$ under consideration is diagonalizable, the ALI (\ref{ALI}) is satisfied with $\mu = 
0.5532$ (the negative of the largest real part of its eigenvalues) 
and
$$
    \Gamma
    =
    UU^*
    =
    {\small\begin{bmatrix}
    1.4750 &   -0.4852 &  -1.4090 &  -0.2636\\
   -0.4852 &    0.6271 &   0.2354 &   0.1475\\
   -1.4090 &    0.2354 &   1.6303 &   0.3569\\
   -0.2636 &   0.1475  &  0.3569  &  0.2676
    \end{bmatrix}   },
$$
where the matrix $U$ is formed from the eigenvectors of $A$.  The corresponding constant (\ref{alpha}) is
$\alpha = 69.6784$,  and the large deviations bound (\ref{cramer2}) of Theorem~\ref{th:devup} is shown in Fig.~\ref{fig:num}.
\begin{figure}[thpb]
      \centering
      \includegraphics[width=120mm]{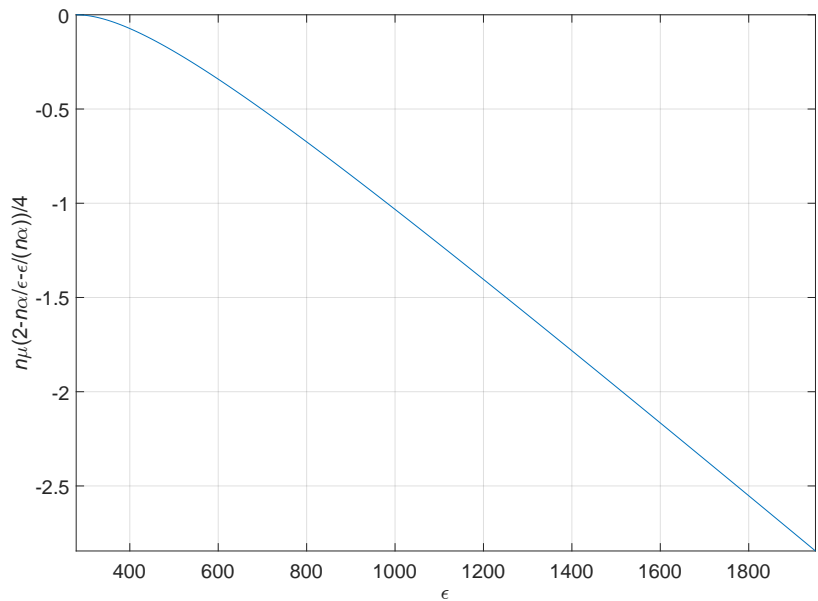}
      \caption{The large deviations probability bound (\ref{cramer2}) as a function of the scale parameter $\eps\> n\alpha$ for the two-mode OQHO of the example specified by (\ref{Thetaex})--(\ref{Piex}).}
      \label{fig:num}
   \end{figure}

\section{Conclusion}\label{sec:conc}

For an  open quantum harmonic oscillator, governed by a linear QSDE driven by vacuum bosonic fields, we have considered a quadratic-exponential  functional which penalizes the second and higher-order moments of the system variables. We have obtained an integro-differential equation for the time evolution of the QEF and compared it with the original quantum risk-sensitive performance criterion which was used previously in measurement-based quantum control and filtering problems. We have discussed multi-point Gaussian quantum states for the system variables at different instants and employed their first four moments for a quartic approximation of the QEF whose infinite-horizon asymptotic behaviour has also been investigated. A numerical example has been provided in order to demonstrate the approximation. Higher-order cumulants, associated with the QEF, have been related to combinatorics of consecutive inversions in permutations and gives rise to a nontrivial problem of their recursive computation. They have also been used for a Cramer-type large deviations estimate for the system variables. We have also considered an auxiliary classical Gaussian Markov diffusion  process in a complex Euclidean space, which reproduces the quantum system variables at the level of covariance functions but has different higher-order moments relevant to the risk-sensitive criteria. The results of the paper may find applications to coherent (measurement-free) quantum risk-sensitive control problems, where the plant and controller form a fully quantum closed-loop system, and other settings with nonquadratic cost functionals.

\appendix
\section{Commutator of quadratic polynomials of quantum variables satisfying CCRs}
\label{sec:quadcom}
\renewcommand{\theequation}{A\arabic{equation}}
\setcounter{equation}{0}

Since the commutator $\ad_{\xi}$, associated with a given operator $\xi$, is a derivation, then repeated application of this property leads to
\begin{align}
\nonumber
    [ab,c d]
    & =
    [ab,c]d+ c[ab,d]\\
\nonumber
    & =
    ([a,c]b + a[b,c])d
    +
    c([a,d]b + a[b,d])\\
\label{abcdapp}
    & =
    [a,c]b d + a[b ,c]d
    +c[a,d]b  + ca[b ,d]
\end{align}
for any operators $a$, $b$, $c$, $d$. Hence, if  these operators satisfy CCRs (that is, their pairwise commutators are identity operators up to scalar factors), then the right-hand side of (\ref{abcdapp}) is a quadratic polynomial of $a$, $b$, $c$, $d$. Therefore, quadratic polynomials of such operators form a Lie algebra with respect to the commutator. For the purposes of Section~\ref{sec:QEF}, we will provide a version of (\ref{abcdapp}) for quadratic polynomials of operators satisfying the CCRs.

\begin{lemma}
\label{lem:quadcom}
Suppose $a:=(a_j)$, $b:=(b_k)$, $c:=(c_{\ell})$, $d:=(d_r)$ are vectors of self-adjoint operators which satisfy  the CCRs
\begin{equation}
\label{abcdCCR}
    \left[
    \begin{bmatrix}
        a\\
        b
    \end{bmatrix},
    \begin{bmatrix}
        c^{\rT}&
        d^{\rT}
    \end{bmatrix}
    \right]
    :=
    \begin{bmatrix}
        [a,c^{\rT}] & [a,d^{\rT}]\\
        [b,c^{\rT}] & [b,d^{\rT}]
    \end{bmatrix}
    =
    2i
   \begin{bmatrix}
     \Theta_{11} & \Theta_{12} \\
     \Theta_{21} & \Theta_{22}
   \end{bmatrix},
\end{equation}
where $\Theta_{jk}$ are real matrices. Also, let $F:= (f_{jk})$ and $G:=(g_{\ell r})$ be appropriately dimensioned complex matrices which specify the bilinear forms
\begin{equation}
\label{aFb_cGd}
    a^{\rT}F b
     =\sum_{j,k}f_{jk}a_jb_k,
     \qquad
    c^{\rT} G d
    =
    \sum_{\ell,r}g_{\ell r}c_{\ell}d_r.
\end{equation}
Then their commutator is also a quadratic polynomial of the operators which is computed as
\begin{equation}
\label{quadcom}
    [a^{\rT}F b, c^{\rT}G d]
    =
    2i
    \begin{bmatrix}
        a^{\rT}&
        b^{\rT}&
        c^{\rT}
    \end{bmatrix}
    \begin{bmatrix}
      0 & 0 & F \Theta_{21} G \\
      0 & 0 & F^{\rT} \Theta_{11} G \\
      G \Theta_{22}^{\rT} F^{\rT} & G \Theta_{12}^{\rT} F & 0
    \end{bmatrix}
    \begin{bmatrix}
        a \\
        b \\
        d
    \end{bmatrix}.
\end{equation}
In the case
$a=b$ (when the vectors $a$ and $b$ are identical), (\ref{quadcom}) takes the form
\begin{equation}
\label{quadcom2}
    [a^{\rT}F a,c^{\rT}G d]
    =
    4i
    \begin{bmatrix}
        a^{\rT}&
        c^{\rT}
    \end{bmatrix}
    \begin{bmatrix}
      0 & \bS(F)\Theta_{11} G\\
      G \Theta_{12}^{\rT} \bS(F) & 0
    \end{bmatrix}
    \begin{bmatrix}
        a \\
        d
    \end{bmatrix},
\end{equation}
where $\bS(F):= \frac{1}{2}(F+F^{\rT})$  denotes the symmetrizer of square matrices.
\hfill$\square$
\end{lemma}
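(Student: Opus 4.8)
The plan is to reduce the commutator of the two bilinear forms to the elementary commutators $[a_j b_k,\,c_\ell d_r]$ by bilinearity, and then to evaluate each of these via the derivation identity (\ref{abcdapp}), whose four terms collapse to scalar multiples once the CCRs (\ref{abcdCCR}) are invoked. Concretely, I would begin from
\[
    [a^{\rT}F b,\, c^{\rT}G d]
    =
    \sum_{j,k,\ell,r}
    f_{jk} g_{\ell r}\,
    [a_j b_k,\, c_\ell d_r]
\]
and apply (\ref{abcdapp}) with $a\mapsto a_j$, $b\mapsto b_k$, $c\mapsto c_\ell$, $d\mapsto d_r$. Since each pairwise commutator $[a_j,c_\ell]$, $[a_j,d_r]$, $[b_k,c_\ell]$, $[b_k,d_r]$ equals $2i$ times the corresponding entry of $\Theta_{11}$, $\Theta_{12}$, $\Theta_{21}$, $\Theta_{22}$ and is therefore a scalar multiple of the identity, it commutes with every operator and factors out, leaving
\[
    [a_j b_k, c_\ell d_r]
    =
    2i
    \big(
        (\Theta_{11})_{j\ell}\, b_k d_r
        +
        (\Theta_{21})_{k\ell}\, a_j d_r
        +
        (\Theta_{12})_{jr}\, c_\ell b_k
        +
        (\Theta_{22})_{kr}\, c_\ell a_j
    \big).
\]

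The second step is to reassemble the four resulting quadruple sums into matrix products. For the $(\Theta_{11})$-term, contracting the scalar coefficients $f_{jk}(\Theta_{11})_{j\ell} g_{\ell r}$ over $j$ and $\ell$ gives $(F^{\rT}\Theta_{11}G)_{kr}$, so this term contributes $b^{\rT}(F^{\rT}\Theta_{11}G)d$; analogously the $(\Theta_{21})$-term contributes $a^{\rT}(F\Theta_{21}G)d$. The two terms whose operator part begins with $c_\ell$ each require one transpose: contracting $f_{jk}(\Theta_{12})_{jr} g_{\ell r}$ over $j$ and $r$ yields $(G\Theta_{12}^{\rT}F)_{\ell k}$, hence $c^{\rT}(G\Theta_{12}^{\rT}F)b$, and likewise the $(\Theta_{22})$-term yields $c^{\rT}(G\Theta_{22}^{\rT}F^{\rT})a$. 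Collecting these four contributions into a single quadratic form, sandwiched between $\begin{bmatrix} a^{\rT} & b^{\rT} & c^{\rT}\end{bmatrix}$ and the column formed by stacking $a$, $b$, $d$, reproduces precisely the block matrix in (\ref{quadcom}).

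For the specialisation (\ref{quadcom2}) I would set $b=a$, which forces $\Theta_{21}=\Theta_{11}$ and $\Theta_{22}=\Theta_{12}$, since the lower and upper block rows of the CCR matrix in (\ref{abcdCCR}) then coincide. The $d$-terms combine as $a^{\rT}(F+F^{\rT})\Theta_{11}G d = 2\, a^{\rT}\bS(F)\Theta_{11}G d$, and the $c$-terms as $2\, c^{\rT}G\Theta_{12}^{\rT}\bS(F)a$, which is exactly (\ref{quadcom2}). As a consistency check, since $a^{\rT}(F-\bS(F))a = a^{\rT}\bA(F)a = \tfrac12\sum_{j,k}(\bA(F))_{jk}[a_j,a_k]$ is central (in the relevant CCR setting) and hence commutes with $c^{\rT}G d$, the commutator can depend on $F$ only through its symmetric part $\bS(F)$, in agreement with the stated form.

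I expect the only delicate point to be the index bookkeeping in the reassembly step — in particular keeping track of which summation index is contracted against which argument of $\Theta_{pq}$, and thereby correctly producing the transposed factors $\Theta_{12}^{\rT}$ and $\Theta_{22}^{\rT}$ in the rows headed by $c^{\rT}$. Once the scalar nature of the CCRs has been used to reduce everything to the single Leibniz expansion (\ref{abcdapp}), no further operator-theoretic input is needed and the remainder is a finite, purely algebraic rearrangement.
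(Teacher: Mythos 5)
Your proposal is correct and follows essentially the same route as the paper's proof: bilinearity reduces everything to the Leibniz expansion (\ref{abcdapp}) of $[a_jb_k,c_\ell d_r]$, the CCRs turn the pairwise commutators into scalars that factor out, and the four resulting contractions reassemble into exactly the four terms $b^{\rT}F^{\rT}\Theta_{11}Gd$, $a^{\rT}F\Theta_{21}Gd$, $c^{\rT}G\Theta_{12}^{\rT}Fb$, $c^{\rT}G\Theta_{22}^{\rT}F^{\rT}a$ appearing in the paper, with the specialisation $a=b$ handled identically via $\Theta_{1k}=\Theta_{2k}$. Your added consistency check that only $\bS(F)$ can matter (because $a^{\rT}\bA(F)a$ is central) is a nice touch not in the paper, but the argument is otherwise the same.
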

\begin{proof}
By recalling the bilinearity of the commutator, applying (\ref{abcdapp}) to the operators $a_j$, $b_k$, $c_{\ell}$, $d_r$ in (\ref{aFb_cGd}) and using the CCRs (\ref{abcdCCR}), it follows that
\begin{align}
\nonumber
    [a^{\rT}F b,c^{\rT}G d]
    & =
    \sum_{j,k,\ell,r}
    f_{jk}g_{\ell r}
    [a_jb_k, c_{\ell}d_r]\\
\nonumber
    & =
    \sum_{j,k,\ell,r}
    f_{jk}g_{\ell r}
    \big(
        [a_j,c_{\ell}]b_k d_r + a_j[b_k ,c_{\ell}]d_r+c_{\ell}[a_j,d_r]b_k  + c_{\ell}a_j[b_k ,d_r]
    \big)        \\
\label{quadcom1}
    & =
    2i
    \big(
        b^{\rT} F^{\rT} \Theta_{11} Gd
        +
        a^{\rT} F \Theta_{21} Gd
    +
    c^{\rT}G \Theta_{12}^{\rT} Fb
    +
    c^{\rT}G \Theta_{22}^{\rT} F^{\rT}a
    \big).
\end{align}
The right-hand side of (\ref{quadcom1}) is a quadratic function of the operators whose vector-matrix form is given by (\ref{quadcom}). If $a=b$, then $F$ in (\ref{aFb_cGd}) is a square matrix, and the matrices in (\ref{abcdCCR}) satisfy $\Theta_{1k}=\Theta_{2k}$ for every $k=1,2$. In this case,
(\ref{quadcom1}) reduces to
\begin{align}
\nonumber
    [a^{\rT}F a,c^{\rT}G d]
    &=
    2i
    \big(
        a^{\rT} (F^{\rT} \Theta_{11}+F \Theta_{21}) Gd+
    c^{\rT}G (\Theta_{12}^{\rT} F+\Theta_{22}^{\rT} F^{\rT})a
    \big)\\
\nonumber
    &=
    2i
    \big(
        a^{\rT} (F^{\rT} +F )\Theta_{11} Gd
    +
    c^{\rT}G \Theta_{12}^{\rT}( F+F^{\rT})a
    \big)    \\
\nonumber
    &=
    4i
    \big(
        a^{\rT} \bS(F)\Theta_{11} Gd
    +
    c^{\rT}G \Theta_{12}^{\rT}\bS(F)a
    \big),
\end{align}
which establishes (\ref{quadcom2}).
\end{proof}

Similar commutation relations hold for bilinear forms of annihilation and creation operators (see, for example, \cite[Appendix B]{JK_1998} and \cite[Lemma 4.2]{PUJ_2012}) and are used in the context of Schwinger's theorems on exponentials of such forms \cite{CG_2010}.

\section{Covariance of quadratic functions of Gaussian quantum variables}
\label{sec:covquad}
\renewcommand{\theequation}{B\arabic{equation}}
\setcounter{equation}{0}

For the purposes of Section~\ref{sec:quart}, we will need the following lemma on the covariance of bilinear forms of Gaussian quantum variables.

\begin{lemma}
\label{lem:covquad}
Suppose the self-adjoint quantum variables, constituting the vectors $a$, $b$, $c$, $d$ in Lemma~\ref{lem:quadcom},  are in a zero-mean Gaussian state. Then the covariance of the bilinear forms in (\ref{aFb_cGd}), specified by complex matrices $F$ and $G$, can be computed as
\begin{equation}
\label{covquad0}
    \cov(
        a^{\rT}F b,
        c^{\rT}G d
    )
    =
    \Bra
        \overline{F\cov(b,d)},
        \cov(a,c)G
    \Ket
    +
    \Bra
        \overline{F\cov(b,c)},
        \cov(a,d)G^{\rT}
    \Ket.
\end{equation}
In the case when $a=b$ and $c=d$, the relation (\ref{covquad0}) reduces to
\begin{equation}
\label{covquad1}
    \cov(
        a^{\rT}F a,
        c^{\rT}G c
    )
    =
    2
    \Bra
        \overline{\bS(F)},
        \cov(a,c)
        \bS(G)
        \cov(a,c)^{\rT}
    \Ket,
\end{equation}
where $\bS$ is the symmetrizer.
\hfill$\square$
\end{lemma}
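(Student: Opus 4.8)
The plan is to reduce the covariance of the two bilinear forms to a single fourth-order product moment of the underlying self-adjoint operators, evaluate that moment by the Wick--Isserlis theorem (\ref{WIT}) for the zero-mean Gaussian state, and then reassemble the surviving pairings into the matrix-trace form of (\ref{covquad0}). Writing $a^{\rT}Fb = \sum_{j,k}f_{jk}a_jb_k$ and $c^{\rT}Gd = \sum_{\ell,r}g_{\ell r}c_\ell d_r$ as in (\ref{aFb_cGd}), I would first expand
\[
    \cov(a^{\rT}Fb, c^{\rT}Gd)
    =
    \sum_{j,k,\ell,r}
    f_{jk}g_{\ell r}
    \big(
        \bE(a_jb_kc_\ell d_r) - \bE(a_jb_k)\bE(c_\ell d_r)
    \big),
\]
so that the whole task becomes the computation of the product moment $\bE(a_jb_kc_\ell d_r)$.

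Next I would apply the Wick--Isserlis theorem (\ref{WIT}) with $r=2$, using the three regular permutations $\cP_2=\{(1,2,3,4),(1,3,2,4),(1,4,2,3)\}$, which preserve the left-to-right operator order (essential in the noncommutative setting). This yields
\[
    \bE(a_jb_kc_\ell d_r)
    =
    \bE(a_jb_k)\bE(c_\ell d_r)
    +
    \bE(a_jc_\ell)\bE(b_kd_r)
    +
    \bE(a_jd_r)\bE(b_kc_\ell).
\]
The first pairing reproduces $\bE(a^{\rT}Fb)\bE(c^{\rT}Gd)$ and therefore cancels against the subtracted product of means. The two remaining pairings, summed against $f_{jk}g_{\ell r}$, give exactly the two terms of (\ref{covquad0}): identifying $\bE(a_jc_\ell)=\cov(a,c)_{j\ell}$ and the analogous entries, each index sum collapses into a trace of a product of $F$, $G$ and two covariance matrices, which I would rewrite in the Frobenius notation $\bra\cdot,\cdot\ket$ via $\Tr(MN^{\rT})=\sum_{j,k}M_{jk}N_{jk}$ together with transpose- and cyclic-invariance of the trace. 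Concretely, the $(a,c),(b,d)$ pairing produces $\bra\overline{F\cov(b,d)},\cov(a,c)G\ket$ and the $(a,d),(b,c)$ pairing produces $\bra\overline{F\cov(b,c)},\cov(a,d)G^{\rT}\ket$.

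The special case (\ref{covquad1}) then follows by setting $a=b$ and $c=d$, whereupon all three cross-covariances $\cov(b,d)$, $\cov(b,c)$, $\cov(a,d)$ collapse to the single matrix $\cov(a,c)$; the two surviving terms combine into $2\bra\overline{F\cov(a,c)},\cov(a,c)\bS(G)\ket$ since $G+G^{\rT}=2\bS(G)$. To reach the fully symmetrized form with $\bS(F)$, I would use the identity $\Tr(\cov(a,c)^{\rT}F^{\rT}\cov(a,c)\bS(G))=\Tr(\cov(a,c)^{\rT}F\cov(a,c)\bS(G))$, which holds because $\bS(G)$ is symmetric and the trace is transpose-invariant, and then average the two expressions to replace $F$ by $\bS(F)$.

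I expect the main obstacle to be purely bookkeeping: tracking which covariance matrix and which transpose appears in each cross term, and matching these against the complex conjugation sitting inside the Frobenius product $\bra\overline{F\cov(b,d)},\cov(a,c)G\ket$. The noncommutativity of the quantum variables enters \emph{only} through the fixed order of factors within each Wick pairing, so the commutation relations (\ref{abcdCCR}) are never invoked here; the entire content of the lemma is the Gaussian moment factorization combined with elementary trace rearrangements.
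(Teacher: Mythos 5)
Your proposal is correct and follows essentially the same route as the paper's own proof: expand the covariance over indices, apply the Wick--Isserlis theorem to the fourth-order mixed moment so that the $(a,b),(c,d)$ pairing cancels against the product of means, and reassemble the two surviving pairings into Frobenius products, with the symmetrization in the special case obtained by the same elementary trace argument. No gaps.
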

\begin{proof}
The mean value of the first bilinear form in (\ref{aFb_cGd}) is computed as
\begin{equation}
\label{aFbmean}
    \bE(a^{\rT}F b)
    =
    \sum_{j,k}
    f_{jk}
    \bE (a_jb_k)
    =
    \Tr(
        F^{\rT}
        \bE(ab^{\rT})
    )
     =
    \Bra
        \overline{F},
        \cov(a,b)
    \Ket,
\end{equation}
where $\bE(ab^{\rT}) = \cov(a,b)$ since the underlying quantum variables are assumed to have zero mean values.
By a similar reasoning, the second bilinear form in (\ref{aFb_cGd}) has the mean value
\begin{align}
\label{cGdmean}
    \bE(c^{\rT}G d)
    & =
    \Bra
        \overline{G},
        \cov(c,d)
    \Ket.
\end{align}
Application of the Wick-Isserlis theorem \cite[Theorem 1.28 on pp.~11--12]{J_1997} (see also \cite{I_1918} and \cite[p.~122]{M_2005}) to the fourth-order mixed moment of the zero-mean Gaussian quantum variables $a_j$, $b_k$, $c_{\ell}$, $d_r$ leads to
\begin{align}
\nonumber
    \bE(
        a^{\rT}F b
        c^{\rT}G d
    )
    & =
    \sum_{j,k,\ell,r}
    f_{jk}g_{\ell r}
    \bE(a_j b_k c_{\ell} d_r)\\
\nonumber
    & =
    \sum_{j,k,\ell,r}
    f_{jk}g_{\ell r}
    \big(
        \cov(a_j,b_k) \cov(c_{\ell},d_r)
        +
        \cov(a_j,c_{\ell}) \cov(b_k,d_r)
        +
        \cov(a_j,d_r) \cov(b_k,c_{\ell})
    \big)\\
\nonumber
    & =
    \Tr(F^{\rT}\cov(a,b))
    \Tr(G^{\rT}\cov(c,d))
    +
    \Tr((F\cov(b,d))^{\rT} \cov(a,c)G)
    +
    \Tr((F\cov(b,c))^{\rT} \cov(a,d)G^{\rT})\\
\label{EaFbcGd}
    & =
    \Bra
        \overline{F},
        \cov(a,b)
    \Ket
    \Bra
        \overline{G},
        \cov(c,d)
    \Ket
    +
    \Bra
        \overline{F\cov(b,d)},
        \cov(a,c)G
    \Ket
    +
    \Bra
        \overline{F\cov(b,c)},
        \cov(a,d)G^{\rT}
    \Ket.
\end{align}
The
covariance of the bilinear forms (\ref{aFb_cGd}) can now be computed
by combining (\ref{aFbmean})--(\ref{EaFbcGd}) as
\begin{align*}
    \cov(
        a^{\rT}F b,
        c^{\rT}G d
    )
    & =
    \bE(
        a^{\rT}F b
        c^{\rT}G d
    )
     -
    \bE(
        a^{\rT}F b
    )
    \bE(
        c^{\rT}G d
    )\\
    & =
    \Bra
        \overline{F\cov(b,d)},
        \cov(a,c)G
    \Ket
    +
    \Bra
        \overline{F\cov(b,c)},
        \cov(a,d)G^{\rT}
    \Ket,
\end{align*}
which establishes (\ref{covquad0}). Application of (\ref{covquad0}) to the particular case $a=c$ and $c=d$ leads to
\begin{align}
\nonumber
    \cov(
        a^{\rT}F a,
        c^{\rT}G c
    )
    & =
    \Bra
        \overline{F\cov(a,c)},
        \cov(a,c)G
    \Ket
    +
    \Bra
        \overline{F\cov(a,c)},
        \cov(a,c)G^{\rT}
    \Ket\\
\nonumber
    & =
    2
    \Bra
        \overline{F\cov(a,c)},
        \cov(a,c)\bS(G)
    \Ket\\
\nonumber
    & =
    2
    \Bra
        \overline{F},
        \cov(a,c)
        \bS(G)
        \cov(a,c)^{\rT}
    \Ket\\
\label{covquad2}
    & =
    2
    \Bra
        \overline{\bS(F)},
        \cov(a,c)
        \bS(G)
        \cov(a,c)^{\rT}
    \Ket,
\end{align}
thus proving (\ref{covquad1}). In (\ref{covquad2}) use has also been made of the symmetry of the matrix $        \cov(a,c)
        \bS(G)
        \cov(a,c)^{\rT}
$, the orthogonality of the subspaces of symmetric and antisymmetric matrices, and the fact that the symmetrizer commutes with the complex conjugation: $\bS(\overline{F} ) = \overline{\bS(F)}$.
\end{proof}

\section{An averaging lemma with multifactor convolutions}
\label{sec:conv}
\renewcommand{\theequation}{C\arabic{equation}}
\setcounter{equation}{0}

For the purposes of the cumulant growth rate results of Theorem~\ref{th:asycumul} in Section~\ref{sec:cumul}, we provide an averaging lemma which involves multifactor convolutions (\ref{conv}).

\begin{lemma}
\label{lem:conv}
Suppose $f_1, \ldots, f_r$ are appropriately dimensioned complex matrix-valued  functions on the real line which are bounded and absolutely integrable, with $r\> 2$. Then
\begin{align}
\nonumber
    \lim_{t\to +\infty}
    \Big(
    \frac{1}{t}
    \int_{[0,t]^r}
    \rprod_{k=1}^r
    f_k(t_k-t_{k+1})
    \rd t_1 \x \ldots \x \rd t_r
    \Big)
    & =
    (f_1* \ldots *f_r) (0)\\
\label{fint}
    & =
    \frac{1}{2\pi}
    \int_{-\infty}^{+\infty}
    \rprod_{k=1}^r
    F_k(\lambda)
    \rd \lambda,
\end{align}
where $t_{r+1} := t_1$, and   $F_k(\lambda):= \int_{-\infty}^{+\infty} f_k(t)\re^{-i\lambda t} \rd t$ denotes the Fourier transform of $f_k$.
\hfill$\square$
\end{lemma}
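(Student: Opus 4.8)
The plan is to reduce the $r$-fold integral over $[0,t]^r$ to an $(r-1)$-fold integral over the consecutive time-differences and then to pass to the limit by dominated convergence. First I would introduce the variables $\tau_k := t_k - t_{k+1}$ for $k=1,\ldots,r-1$, keeping $t_r$ as a free variable. This linear substitution is unimodular, so $\rd t_1 \x \ldots \x \rd t_r = \rd \tau_1 \x \ldots \x \rd \tau_{r-1}\,\rd t_r$, and the cyclic closure $t_{r+1}=t_1$ gives $t_r - t_1 = -\sum_{k=1}^{r-1}\tau_k$, so that the integrand
\[
    \rprod_{k=1}^{r-1} f_k(\tau_k)\cdot f_r\Big(-\sum_{k=1}^{r-1}\tau_k\Big)
\]
no longer depends on $t_r$.

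Next, writing $t_k = t_r + \sum_{j=k}^{r-1}\tau_j$, the constraints $t_k \in [0,t]$ for all $k$ confine $t_r$ to an interval whose length is $L_t(\tau) = \max(0,\,t - W(\tau))$, where $W(\tau)\>0$ is the spread (maximum minus minimum) of the partial sums $\sum_{j=k}^{r-1}\tau_j$, $k=1,\ldots,r$. Integrating out $t_r$ and dividing by $t$ therefore recasts the left-hand side of (\ref{fint}) as
\[
    \int_{\mR^{r-1}}
    \frac{L_t(\tau)}{t}\,
    \rprod_{k=1}^{r-1} f_k(\tau_k)\, f_r\Big(-\sum_{k}\tau_k\Big)
    \,\rd \tau_1 \x \ldots \x \rd \tau_{r-1}.
\]
For each fixed $\tau$ one has $L_t(\tau)/t \to 1$ as $t\to+\infty$, with $0\< L_t(\tau)/t \< 1$, while the majorant $\prod_{k=1}^{r-1}\|f_k(\tau_k)\|\cdot\|f_r(-\sum_k\tau_k)\|$ is integrable because its integral is bounded by $\|f_r\|_{\infty}\prod_{k=1}^{r-1}\|f_k\|_{L^1}<+\infty$, thanks to boundedness and absolute integrability. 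The dominated convergence theorem then produces the limit $\int_{\mR^{r-1}} \rprod_{k=1}^{r-1} f_k(\tau_k)\, f_r(-\sum_k\tau_k)\,\rd\tau$, which is exactly the iterated convolution $(f_1 * \ldots * f_r)(0)$ evaluated at the origin. This establishes the first equality in (\ref{fint}).

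For the second equality I would invoke the convolution theorem, by which $f_1 * \ldots * f_r$ has the ordered Fourier transform $\rprod_{k=1}^r F_k$. Since each $f_k$ is bounded and absolutely integrable, it lies in $L^2$, so $F_k\in L^2$; hence $F_1 F_2 \in L^1$ by the Cauchy--Schwarz inequality and, multiplied by the remaining bounded factors, $\rprod_{k=1}^r F_k\in L^1$, while $f_1 * \ldots * f_r$ is continuous. Fourier inversion at the origin then yields $(f_1 * \ldots * f_r)(0) = \frac{1}{2\pi}\int_{-\infty}^{+\infty} \rprod_{k=1}^r F_k(\lambda)\,\rd\lambda$.

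The main obstacle is the averaging step: accurately identifying the admissible range of the free variable $t_r$ after the change of variables and verifying that $L_t(\tau)/t\to 1$ together with an integrable majorant, so that the interchange of limit and integration is justified. Once this is in place, the unimodular Jacobians, the recognition of the cyclic integral as a convolution at $0$, and the Fourier-inversion identity (with integrability secured by the embedding $L^1\cap L^\infty\subset L^2$) are all routine.
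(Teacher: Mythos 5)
Your proposal is correct and follows essentially the same route as the paper's proof: a unimodular change of variables to the time differences, identification of the length of the admissible interval for the one remaining free variable (your $L_t(\tau)/t$ is the paper's $h_t$), passage to the limit by dominated convergence, and the convolution/Fourier-inversion theorem for the second equality. The only differences are cosmetic (you difference consecutive times keeping $t_r$ free, the paper translates by $t_1$) plus your slightly more explicit justification of the $L^1$ integrability needed for Fourier inversion.
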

\begin{proof}
Since the integrand on the left-hand side of (\ref{fint}) depends on the integration variables $t_1, \ldots, t_r$ only through their differences, they can be translated so as to represent the integral in the form
\begin{equation}
\label{fg}
    \int_{[0,t]^r}
    \rprod_{k=1}^r
    f_k(t_k-t_{k+1})
    \rd t_1 \x \ldots \x \rd t_r
    =
    \int_0^t
    g_t(t_1)
    \rd t_1,
\end{equation}
where
\begin{align}
\nonumber
    g_t(t_1)
    & :=
    \int_{[0,t]^{r-1}}
    \rprod_{k=1}^r
    f_k(t_k-t_{k+1})
    \rd t_2 \x \ldots \x \rd t_r\\
\label{gs}
    & =
    \int_{[-t_1, t-t_1]^{r-1}}
    \rprod_{k=1}^r
    f_k(\tau_k-\tau_{k+1})
    \rd \tau_2 \x \ldots \x \rd \tau_r.
\end{align}
Here, use is made of the new integration variables
$
    \tau_k := t_k-t_1
$ for all $
    k=2, \ldots, r
$ together with the corresponding convention $\tau_1 = \tau_{r+1}=0$.
The right-hand side of (\ref{gs}) is organized as the convolution $(f_1* \ldots *f_r)(0)$ (evaluated at $0$) except that the integration is restricted to the cube $[-t_1, t-t_1]^{r-1}$.
Now, the fulfillment of the inclusions $t_1 \in [0, t]$ and $(\tau_2, \ldots, \tau_r) \in [-t_1, t-t_1]^{r-1}$ is equivalent to $t_1$ belonging to the (possibly empty) interval
\begin{equation}
\label{tint}
    -
    \min(0, \tau_2, \ldots, \tau_r)
    \<
    t_1
    \<
    t
    -
    \max(0, \tau_2, \ldots, \tau_r)
\end{equation}
of length
\begin{equation}
\label{phichi}
    \max
    \big(
        0,\,
        t
        -
        \max(0, \tau_2, \ldots, \tau_r)
        +
        \min(0, \tau_2, \ldots, \tau_r)
    \big)
    =
    t h_t(\tau_2, \ldots, \tau_r).
\end{equation}
Here,
\begin{equation}
\label{phi}
    h_t(\tau_2, \ldots, \tau_r)
    :=
    \chi_t
    \big(
        \max(0, \tau_2, \ldots, \tau_r)
        -
        \min(0, \tau_2, \ldots, \tau_r)
    \big)
\end{equation}
inherits from the function $\chi_t$ in (\ref{chi}) the properties of being bounded by and convergent to $1$ as $t\to +\infty$ for any given $\tau_2, \ldots, \tau_r \in \mR$. By combining (\ref{fg})--(\ref{phi}), it follows that
\begin{align}
\nonumber
    \frac{1}{t}
    \int_{[0,t]^r}
    \rprod_{k=1}^r
    f_k(t_k-t_{k+1})
    \rd t_1 \x \ldots \x \rd t_r
    & =
    \int_{\mR^{r-1}}
    h_t(\tau_2, \ldots, \tau_r)
    \rprod_{k=1}^r
    f_k(\tau_k-\tau_{k+1})
    \rd \tau_2 \x \ldots \x \rd \tau_r    \\
\label{fg1}
    &
    \to
    \int_{\mR^{r-1}}
    \rprod_{k=1}^r
    f_k(\tau_k-\tau_{k+1})
    \rd \tau_2 \x \ldots \x \rd \tau_r,
    \qquad
    {\rm as}\
    t\to +\infty,
\end{align}
where Lebesgue's dominated convergence theorem is applicable since the functions $f_1, \ldots, f_r$ are bounded and absolutely integrable (and hence, so are their convolutions). The limit in (\ref{fg1}) is $(f_1* \ldots *f_r)(0)$, which establishes the first of the equalities (\ref{fint}), with the second of them following from the convolution theorem.
\end{proof}

\end{document}